\documentclass[hidelinks,onefignum,onetabnum]{siamart251216}
\usepackage[T1]{fontenc}
\usepackage{lmodern,amssymb,mathrsfs,microtype,graphicx}
\usepackage{xurl,placeins}
\newsiamremark{remark}{Remark}
\newcommand{\T}{\mathbb T}
\newcommand{\E}{\mathbb E}
\newcommand{\PP}{\mathbb P}
\newcommand{\F}{\mathcal F}
\newcommand{\HS}{\mathcal L_2}
\newcommand{\U}{\mathfrak U}
\newcommand{\ip}[2]{\langle #1,#2\rangle}
\newcommand{\norm}[1]{\lVert #1\rVert}
\newcommand{\dd}{\,\mathrm d}
\newcommand{\ind}{\mathbf 1}
\newcommand{\compactembed}{\mathrel{\lhook\joinrel\hookrightarrow}}
\allowdisplaybreaks[2]
\headers{Convergence rate for stochastic Korteweg--de Vries}{J. Li, X. Niu, and C. Li}
\title{Strong convergence rate of the implicit midpoint method for stochastic KdV equations\thanks{This work was partially supported by the National Natural Science Foundation of China (Grant Nos.~11871271, 12001283, and 11901347).}}
\author{Junjie Li\thanks{School of Mathematics, Nanjing University, Nanjing 210093, P. R. China\newline(\email{652023210010@smail.nju.edu.cn}).}
\and Xinyan Niu\thanks{School of Statistics and Mathematics, Shandong University of Finance and Economics, Jinan, Shandong 250014, P. R. China (\email{20153462@sdufe.edu.cn}).}
\and Chun Li\thanks{Corresponding author. School of Mathematics, Nanjing University, Nanjing 210093, P. R. China (\email{lichun@nju.edu.cn}).}}
\hypersetup{pdftitle={Strong convergence rate of the implicit midpoint method for stochastic KdV equations},pdfauthor={Junjie Li, Xinyan Niu, and Chun Li}}
\begin{document}
\maketitle
\begin{abstract}
We establish a strong convergence rate for the implicit midpoint method applied to the periodic stochastic Korteweg--de Vries equation with additive noise, addressing a question that has remained open since the work of Debussche and Printems. To control nonlinear error growth, we obtain exponential integrability estimates for the exact and comparison solutions using the coercivity of a classical KdV invariant. We then derive local error bounds from estimates for the comparison solutions and control error accumulation through an $L^2$ error propagation identity that isolates a martingale increment. Under $H_0^{9/2}$ regularity assumptions on the noise and the deterministic initial data, we prove that, on every fixed finite time interval, the maximum $L^2(\T)$ error at the time points is $O(h)$ in $L^p(\Omega)$ for every $1\le p<\infty$. Notably, the exponential integrability and the convergence rate hold for every damping coefficient $\lambda\ge0$, including the undamped case $\lambda=0$. Numerical experiments confirm the predicted first-order convergence.
\end{abstract}

\begin{keywords}
stochastic Korteweg--de Vries equation, implicit midpoint method, strong convergence, exponential integrability, coercive KdV invariant
\end{keywords}
\begin{MSCcodes}
60H35, 60H15, 65M12, 35Q53
\end{MSCcodes}

\section{Introduction}\label{sec:introduction}
We consider the periodic stochastic Korteweg--de Vries (sKdV) equation
\begin{equation}\label{eq:spde}
 \dd u+\left(\partial_x^3u+\tfrac12\partial_x(u^2)+\lambda u\right)\dd t
 =\Phi\dd W,\qquad u(0)=u_0,\qquad \lambda\ge0,
\end{equation}
on $\T=\mathbb R/(2\pi\mathbb Z)$. We work on a filtered probability space $(\Omega,\F,(\F_t)_{t\ge0},\PP)$ satisfying the usual conditions. Here $W$ is a cylindrical Wiener process on a separable real Hilbert space $\U$ with respect to $(\F_t)_{t\ge0}$, and $\Phi$ is a deterministic Hilbert--Schmidt operator with values in a periodic Sobolev space. The initial data and the range of $\Phi$ have zero spatial mean. Let $\HS(\U,X)$ denote the space of Hilbert--Schmidt operators from $\U$ to a Hilbert space $X$. The well-posedness and long-time behavior of \eqref{eq:spde} have been studied extensively. De~Bouard, Debussche, and Tsutsumi~\cite{deBouardDebusscheTsutsumi2005} proved local existence and uniqueness for the periodic equation with additive noise that is white in time and almost white in space; Oh~\cite{Oh2009} established local well-posedness for additive space-time white noise. For the weakly damped periodic equation, Glatt-Holtz, Martinez, and Richards~\cite{GHMR2026} studied the regularity and ergodic properties of invariant measures and proved exponential mixing under sufficiently large damping. For arbitrary positive damping, they used modifications of the KdV invariants to obtain uniform-in-time Sobolev moment bounds and exponential moment estimates.

Debussche and Printems~\cite{DebusschePrintems1999} used the implicit midpoint method to investigate the influence of homogeneous noise on soliton evolution. They later proved convergence for the equation on the real line with additive, spatially localized noise~\cite{DebusschePrintems2006}, without obtaining a convergence rate. Further numerical studies have examined different aspects of sKdV approximation. Y.~Li, Shu, and Tang~\cite{LiShuTang2020} developed an ultra-weak discontinuous Galerkin method with implicit--explicit time stepping for generalized stochastic KdV equations. D'Ambrosio and Di~Giovacchino~\cite{DAmbrosioDiGiovacchino2023} studied numerical conservation properties of stochastic $\theta$-methods, while Cui et~al.~\cite{CuiEtAl2026} derived strong error bounds for a temporal approximation based on small-noise linearization. An explicit exponential integrator with artificial viscosity was studied by J.~Li and C.~Li~\cite{LiLiExponential}, and Galilean splitting methods were analyzed at the Fourier--Galerkin level by J.~Li, Z.~Wang, and C.~Li~\cite{LiWangLiGalilean}. J.~Li et~al.~\cite{LiLiZhouWang2026} analyzed a Lie--Trotter splitting with exact subflows, proving strong and weak order~$1$ uniformly in time under sufficiently large damping and strong order~$1$ on finite time intervals under a weaker damping condition.

Strong convergence rates have been established for numerical approximations of a wide range of SPDEs. Abdulle, Br\'ehier, and Vilmart~\cite{AbdulleBrehierVilmart2023} and X.~Wang~\cite{Wang2020} obtained such rates for parabolic semilinear SPDEs and the stochastic Allen--Cahn equation, respectively, using the smoothing properties of the associated analytic semigroups. Y.~Wang, W.~Cao, and Y.~Cao~\cite{WangCaoCao2026} established strong convergence rates for a fully discrete tamed exponential scheme for the stochastic Burgers--Huxley equation. Chen, Hong, and Ji~\cite{ChenHongJi2019} employed discrete a priori estimates for stochastic Maxwell equations, while Cui et~al.~\cite{CuiHongLiuZhou2019} used exponential integrability for stochastic cubic Schr\"odinger equations. Br\'ehier and Cohen~\cite{BrehierCohen2023} used exponential moment bounds to establish strong convergence rates for a splitting scheme for nonlinear stochastic Schr\"odinger equations with additive noise and smooth nonlocal interactions. Jentzen and Pu\v{s}nik~\cite{JentzenPusnik2018} established exponential moment bounds for tamed exponential Euler approximations of nonlinear SPDEs. For \eqref{eq:spde}, however, the Airy group preserves Sobolev norms and provides no dissipative smoothing, while the nonlinear transport term carries one spatial derivative. The implicit discretization of the nonlinear term further complicates the convergence analysis. To the best of our knowledge, the question of a strong convergence rate for the implicit midpoint method has remained open.

For the periodic problem \eqref{eq:spde}, we obtain such a rate for this method, which preserves the quadratic invariant $\norm{u}_{L^2}^2$ in the absence of noise and damping. The scheme reads:
\begin{equation}\label{eq:scheme}
 \begin{split}
 U^{n+1}-U^n+h\left(\partial_x^3m^n+
             \tfrac12\partial_x((m^n)^2)+\lambda m^n\right)
 &=\Phi\Delta W^n,\\
 m^n=\tfrac12(U^{n+1}+U^n),\qquad U^0&=u_0,
 \end{split}
\end{equation}
where $h>0$ is the time step, $t_n=nh$ and $\Phi\Delta W^n=\int_{t_n}^{t_{n+1}}\Phi\dd W$.

To control nonlinear error growth, we establish exponential integrability for the comparison solutions using a coercivity estimate for a classical KdV invariant~\cite{Lax1968}, and pass to the limit to obtain it for the exact solution. For deterministic initial data $u_0\in H_0^{9/2}(\T)$ and a noise coefficient $\Phi\in\HS(\U,H_0^{9/2}(\T))$, Theorem~\ref{thm:exact} gives, for every $T>0$ and $0\le\Lambda<\infty$,
\[
 \sup_{0\le\lambda\le\Lambda}
 \E\exp\!\left(A\int_0^T\norm{\partial_xu(t)}_\infty\dd t\right)<\infty
 \qquad\text{for every }A>0,
\]
together with Sobolev moment bounds and a comparison estimate. In particular, we extend the corresponding exponential integrability result of Glatt-Holtz, Martinez, and Richards~\cite{GHMR2026} to the undamped equation. Combining local error estimates for the comparison solutions, an $L^2$ error propagation identity that isolates a martingale increment, and the comparison estimate yields Theorem~\ref{thm:main}: for every $r\ge1$,
\[
 \E\max_{0\le n\le\lfloor T/h\rfloor}
       \norm{u(t_n)-U^n}_{L^2(\T)}^{2r}\le Ch^{2r}
\]
for all $0<h\le1$, $\lambda\in[0,\Lambda]$, and every solution of the implicit midpoint scheme~\eqref{eq:scheme}, with $C$ independent of $h$ and $\lambda$. The estimate also covers the undamped case, whereas the corresponding result for the splitting method in~\cite{LiLiZhouWang2026} requires a positive lower bound on the damping coefficient. Numerical experiments demonstrate first-order convergence.

The remainder of the paper is organized as follows. Section~\ref{sec:midpoint} establishes existence and moment bounds for solutions of the implicit midpoint scheme. Section~\ref{sec:exact} proves exponential integrability and Sobolev moment bounds for the comparison solutions and the exact solution, together with the comparison estimate. Section~\ref{sec:strong} derives the local error estimates and the strong error estimate. Section~\ref{sec:numerics} presents numerical experiments, and Section~\ref{sec:conclusion} concludes the paper.

\section{The implicit midpoint method}\label{sec:midpoint}
We use the normalized Lebesgue measure $\mathrm dx/(2\pi)$ on $\T$ and write $\ip fg=\int_\T fg,\ \norm v_p=\norm v_{L^p(\T)}$. For real-valued periodic functions, with $\widehat v(k)=\int_\T v(x)e^{-ikx}$, set
\[
 \norm v_{H^q}^2=\sum_{k\in\mathbb Z}(1+k^2)^q|\widehat v(k)|^2,
 \qquad H^q_0=\{v\in H^q:\widehat v(0)=0\},\qquad H=L^2_0.
\]

A solution of \eqref{eq:scheme} consists of $\F_{t_n}$-measurable $H$-valued random variables $U^n$ and $\F_{t_{n+1}}$-measurable $H^3_0$-valued random variables $m^n$, $n\ge0$, such that $U^0=u_0$, $m^n=(U^{n+1}+U^n)/2$, and \eqref{eq:scheme} holds in $H$ almost surely at every step.

With $m=m^n$ and $f=U^n+\Phi\Delta W^n/2$, equation~\eqref{eq:scheme} is equivalent to
\begin{equation}\label{eq:stage}
 m+\frac h2\left(\partial_x^3m+m\partial_x m+\lambda m\right)=f.
\end{equation}
Define $Q=(I+\tfrac h2(\partial_x^3+\lambda I))^{-1}$. The Fourier multiplier of $Q$ is $(1+\frac h2(\lambda-ik^3))^{-1}$, and $Q^*$ has the complex conjugate multiplier. Hence
\begin{equation}\label{eq:Q-contraction}
 \norm Q_{H^q\to H^q}\le1,\qquad
 \norm{Q^*}_{H^q\to H^q}\le1,\qquad q\in\mathbb R.
\end{equation}

\begin{proposition}[Existence of solutions]\label{prop:roots}
For each $h>0$ and $\lambda\ge0$, the solution set of \eqref{eq:stage} in $H^3_0$ is nonempty and compact for every $f\in H$. There is a Borel measurable map $\mathcal S_{h,\lambda}:H\to H^3_0$ such that $\mathcal S_{h,\lambda}(f)$ solves \eqref{eq:stage} for every $f\in H$. Every solution $m$ satisfies
\begin{equation}\label{eq:radial}
 \norm m_2\le(1+\lambda h/2)^{-1}\norm f_2\le\norm f_2.
\end{equation}
If $u_0\in H$ is deterministic and $\Phi\in\HS(\U,H)$, then \eqref{eq:scheme} has a solution. If $u_0\in H^3_0$, then every solution satisfies $U^n\in H^3_0$ almost surely for every $n\ge0$.
\end{proposition}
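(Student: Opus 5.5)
Testing \eqref{eq:stage} against $m$ gives the a priori bound \eqref{eq:radial} directly: $\ip{\partial_x^3m}{m}=0$ by periodicity, and $\ip{m\partial_xm}{m}=\tfrac13\int_\T\partial_x(m^3)=0$. This leaves $(1+\lambda h/2)\norm m_2^2=\ip fm\le\norm f_2\norm m_2$. We also note that every $m\in H^3_0$ makes $m\partial_xm$ mean-zero, so \eqref{eq:stage} is consistent in $H$.

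For existence, rewrite \eqref{eq:stage} as the fixed-point problem
\[
m=T(m):=Q\Bigl(f-\tfrac h4\partial_x(m^2)\Bigr).
\]
The multiplier of $Q$ decays like $|k|^{-3}$, so $Q$ maps $H^{q}_0$ into $H^{q+3}_0$. For $m\in H$ we have $m^2\in L^1\subset H^{-1/2-\epsilon}$, hence $\partial_x(m^2)\in H^{-3/2-\epsilon}$ and $Q\partial_x(m^2)\in H^{3/2-\epsilon}$. Therefore $T:H\to H$ is continuous and compact on bounded sets. Apply the Leray--Schauder (Schaefer) theorem: every solution of $m=\sigma T(m)$ with $\sigma\in[0,1]$ solves \eqref{eq:stage} with $f$ replaced by $\sigma f$ and the nonlinearity scaled by $\sigma$. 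The energy identity still holds for that equation, so $\norm m_2\le\norm f_2$ uniformly in $\sigma$, and a fixed point exists. To get $m\in H^3_0$ we bootstrap. From $m\in H$, $T$ gives $m\in H^{3/2-\epsilon}\subset L^\infty$. Then $m^2\in H^{3/2-\epsilon}$, so $m\in H^{7/2-\epsilon}$, which contains $H^3_0$. The same bootstrap with quantitative constants shows that solutions are bounded in $H^{7/2-\epsilon}$ by a function of $\norm f_2$. The solution set is therefore bounded in $H^{7/2-\epsilon}$ and closed in $H^3_0$, since $T$ is continuous. Hence it is compact in $H^3_0$.

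For measurability, the set $\{(f,m)\in H\times H^3_0:\ m=T_f(m)\}$ is closed, and each $f$-section is nonempty and compact. Closedness follows because the residual map is continuous in $(f,m)$, and by the bootstrap bounds the sections are locally uniformly bounded in $H^{7/2-\epsilon}$. Consequently $f\mapsto\{\text{solutions}\}$ is upper semicontinuous with compact values, hence Borel measurable. The Kuratowski--Ryll-Nardzewski selection theorem then yields a Borel selector $\mathcal S_{h,\lambda}$. Setting $m^n=\mathcal S_{h,\lambda}(U^n+\Phi\Delta W^n/2)$ and $U^{n+1}=2m^n-U^n$ gives adaptedness by induction: $U^n$ is $\F_{t_n}$-measurable and $\Phi\Delta W^n$ is $\F_{t_{n+1}}$-measurable with values in $H$. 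This yields a solution of \eqref{eq:scheme}. If $u_0\in H^3_0$, then $m^n\in H^3_0$ and $U^{n+1}=2m^n-U^n$. This last step is where I expect trouble: $\Phi\Delta W^n$ need not lie in $H^3_0$ under $\Phi\in\HS(\U,H)$ alone, so the induction does not obviously close. I would first try to exploit that the later regularity hypotheses on $\Phi$ give $\Phi\Delta W^n\in H^3_0$ almost surely; otherwise $U^{n+1}$ is only guaranteed in $H$.

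The main obstacle is the measurable selection. I would check carefully that the graph is closed, using the uniform bootstrap bounds in $H^{7/2-\epsilon}$, so that the standard selection theorem applies.
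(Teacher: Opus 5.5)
Your energy identity, the compactness of $T:H\to H$ via $L^1\subset H^{-1/2-\epsilon}$, the Schaefer fixed point, and your selection route are sound. The selection route (upper semicontinuity plus Kuratowski--Ryll-Nardzewski, in place of the paper's closed graph, compact sections and Arsenin--Kunugui) would also work. However, your bootstrap contains a false claim, and both your compactness and your measurability arguments rest on it.

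From $m\in H^{3/2-\epsilon}$ you do get $Q\partial_x(m^2)\in H^{7/2-\epsilon}$. But the linear part $Qf$ gains exactly three derivatives, so for general $f\in H$ it lies in $H^3_0$ and no better. Solutions are therefore in $H^3_0$ but \emph{not} bounded in $H^{7/2-\epsilon}$, and a bounded closed subset of $H^3_0$ need not be compact. The fix is to note that all solutions for a fixed $f$ share the same rough part $Qf$. The differences $m-Qf=-\frac h4Q\partial_x(m^2)$ are bounded in $H^{7/2-\epsilon}$ by a function of $\norm f_2$, so they lie in a relatively compact subset of $H^3_0$. For upper semicontinuity, also use that $f\mapsto Qf$ is continuous from $H$ to $H^3_0$. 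The paper argues equivalently: it extracts an $H^2$-convergent subsequence of $H^3$-bounded solutions and passes to the limit in $m_j=Qf-\frac h2Q(m_j\partial_xm_j)$, using continuity of $v\mapsto Q(v\partial_xv)$ from $H^2$ to $H^3$.

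Your worry about the last claim is unfounded, and as written you leave that claim unproved. No extra regularity of $\Phi$ is needed. By the definition of a solution, and in any case by your own bootstrap since $Qf\in H^3_0$ for every $f\in H$, we have $m^n\in H^3_0$ however rough $f=U^n+\Phi\Delta W^n/2$ is. If $U^n\in H^3_0$, then $U^{n+1}=2m^n-U^n\in H^3_0$, and the induction closes at once. The noise enters only through $f$, and its roughness is absorbed by $Q$; equivalently, it is cancelled by $h\partial_x^3m^n$ in \eqref{eq:scheme}.
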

\begin{proof}
\medskip\noindent\emph{Step 1: compactness of $\mathcal T_f$.}
Fix $h>0$, $\lambda\ge0$ and $f\in H$, and define
\[
 \mathcal T_f(m)=Qf-\frac h4Q\partial_x(m^2),\qquad m\in H.
\]
For $v\in L^2$ and $g\in L^1$, Parseval's identity and $|\widehat g(k)|\le\norm g_1$ give
\[
 \begin{aligned}
 \norm{Qv}_{H^3}^2
 &=\sum_{k\in\mathbb Z}
 \frac{(1+k^2)^3|\widehat v(k)|^2}{(1+\lambda h/2)^2+\frac{h^2}{4}k^6}
 \le C_{h,\lambda}\norm v_2^2,\\
 \norm{Q\partial_x g}_{H^1}^2
 &\le\norm g_1^2\sum_{k\ne0}
 \frac{(1+k^2)k^2}{(1+\lambda h/2)^2+\frac{h^2}{4}k^6}
 \le C_{h,\lambda}\norm g_1^2.
 \end{aligned}
\]
The second series converges by comparison with $\sum_{k\ne0}|k|^{-2}$. Since $f$ has mean zero and
\[
 \norm{m^2-\widetilde m^2}_1
 \le(\norm m_2+\norm{\widetilde m}_2)\norm{m-\widetilde m}_2,
\]
the map $\mathcal T_f:H\to H^1_0$ is continuous and maps bounded sets into bounded sets. The compact embedding $H^1_0\compactembed H$ therefore makes $\mathcal T_f:H\to H$ compact.

\medskip\noindent\emph{Step 2: a priori estimate and existence.}
Let $m\in H$ satisfy $m=t\mathcal T_f(m)$ for some $t\in[0,1]$. Step~1 gives $m\in H^1_0$. Since $\norm{m\partial_x m}_2\le C\norm m_{H^1}^2$, we have
\[
 m=t\mathcal T_f(m)
 =Q\left(tf-\frac{th}{2}m\partial_x m\right)\in H^3_0.
\]
Applying $Q^{-1}$, taking the $L^2$ inner product with $m$, and integrating by parts yield
\[
 (1+\lambda h/2)\norm m_2^2=t\ip fm\le t\norm f_2\norm m_2.
\]
Thus $\norm m_2\le(1+\lambda h/2)^{-1}\norm f_2$, uniformly in $t\in[0,1]$. The Leray--Schauder fixed-point theorem~\cite{Deimling1985} gives a fixed point in $H^3_0$ solving \eqref{eq:stage}. The same calculation with $t=1$ proves \eqref{eq:radial} for every solution.

\medskip\noindent\emph{Step 3: compactness of the solution set and measurable selection.}
For any solution $m$, Step~1 and \eqref{eq:radial} give
\[
 \norm m_{H^1}
 \le C_{h,\lambda}(\norm f_2+\norm m_2^2)
 \le C_{h,\lambda}(\norm f_2+\norm f_2^2).
\]
Using $Q:L^2\to H^3$ and $\norm{m\partial_x m}_2\le C\norm m_{H^1}^2$, we obtain
\[
 \begin{aligned}
 \norm m_{H^3}
 &=\norm{Q\left(f-\frac h2m\partial_x m\right)}_{H^3}
 \le C_{h,\lambda}(\norm f_2+\norm m_{H^1}^2)\\
 &\le C_{h,\lambda}(\norm f_2+\norm f_2^2+\norm f_2^4).
 \end{aligned}
\]
Let $(m_j)$ be a sequence of solutions for the fixed $f$. Since $H^3_0\compactembed H^2_0$, a subsequence satisfies $m_j\to m$ in $H^2_0$, and
\[
 \norm{m_j\partial_x m_j-m\partial_x m}_2
 \le\norm{m_j-m}_\infty\norm{\partial_x m_j}_2
       +\norm m_\infty\norm{\partial_x m_j-\partial_x m}_2\longrightarrow0.
\]
The bound on $Q:L^2\to H^3$ then yields
\[
 m_j=Qf-\frac h2Q(m_j\partial_x m_j)\longrightarrow Qf-\frac h2Q(m\partial_x m)
 \quad\hbox{in }H^3_0.
\]
Uniqueness of the $H^2$ limit gives $m=Qf-\frac h2Q(m\partial_x m)$. Thus $m$ solves \eqref{eq:stage}, and the solution set is compact in $H^3_0$. The map
\[
 H\times H^3_0\ni(f,m)\longmapsto
 m+\frac h2(\partial_x^3m+m\partial_x m+\lambda m)-f\in H
\]
is continuous, so the solution graph is closed and hence Borel. Its sections are nonempty and compact, and both spaces are Polish. The Arsenin--Kunugui theorem~\cite[Theorem 18.18]{Kechris1995} therefore yields the required Borel selector $\mathcal S_{h,\lambda}$.

Set $U^0=u_0$ and, for $n\ge0$, define
\[
 m^n=\mathcal S_{h,\lambda}(U^n+\Phi\Delta W^n/2),\qquad
 U^{n+1}=2m^n-U^n.
\]
The Borel measurability of $\mathcal S_{h,\lambda}$ and the continuous embedding $H^3_0\hookrightarrow H$ give the required measurability by induction. The recursion satisfies \eqref{eq:scheme}, so these sequences form a solution. For every solution with $u_0\in H^3_0$, the identity $U^{n+1}=2m^n-U^n$ also gives $U^n\in H^3_0$ almost surely for all $n\ge0$ by induction.
\end{proof}

\begin{proposition}[Finite-time moment bounds]\label{prop:numerical}
For $p\ge1$, $T>0$, $\Phi\in\HS(\U,H^2_0)$ and deterministic $u_0\in H$, there is a constant $C_{p,T,\Phi}$, depending on $\Phi$ only through $\norm\Phi_{\HS(\U,H^2)}$, such that, for every $\lambda\ge0$, every $0<h\le1$ and every solution of \eqref{eq:scheme},
\begin{equation}\label{eq:numerical}
 \E\max_{0\le n\le\lfloor T/h\rfloor}\norm{U^n}_2^{2p}
 \le C_{p,T,\Phi}(1+\norm{u_0}_2^{2p}).
\end{equation}
\end{proposition}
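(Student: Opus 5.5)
We derive a discrete $L^2$ energy identity for the scheme, use the Burkholder--Davis--Gundy inequality on the martingale part, and close the estimate with a discrete Gronwall argument. Take the $L^2$ inner product of \eqref{eq:scheme} with $m^n=\tfrac12(U^{n+1}+U^n)$. The Airy term vanishes because $\ip{\partial_x^3m}{m}=0$. The transport term vanishes because $\ip{\partial_x(m^2)}{m}=0$; this is where the $H^3_0$ regularity of $m^n$ from Proposition~\ref{prop:roots} justifies the integrations by parts. The damping term $h\lambda\norm{m^n}_2^2\ge0$ can be dropped. This gives
\[
 \tfrac12\bigl(\norm{U^{n+1}}_2^2-\norm{U^n}_2^2\bigr)+h\lambda\norm{m^n}_2^2
 =\ip{\Phi\Delta W^n}{m^n}.
\]
Next split $m^n$ to separate the martingale part. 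Write $m^n=U^n+\tfrac12(U^{n+1}-U^n)$. Then
\[
 \ip{\Phi\Delta W^n}{m^n}=\ip{\Phi\Delta W^n}{U^n}+\tfrac12\ip{\Phi\Delta W^n}{U^{n+1}-U^n}.
\]
The first term is a martingale increment because $U^n$ is $\F_{t_n}$-measurable.

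The second term is the main obstacle, since the increment $U^{n+1}-U^n$ involves $\partial_x^3m^n$ and $\partial_x((m^n)^2)$, which are not controlled in $L^2$. I would avoid it entirely by using a different decomposition. Set $f=U^n+\Phi\Delta W^n/2$. Testing \eqref{eq:stage} with $m$ gives $\norm{m}_2\le\norm f_2$, as in \eqref{eq:radial}. Then $U^{n+1}=2m^n-U^n$, and
\[
 \norm{U^{n+1}}_2^2=4\norm{m^n}_2^2-4\ip{m^n}{U^n}+\norm{U^n}_2^2.
\]
Testing \eqref{eq:stage} with $m$ also yields the exact identity $(1+\lambda h/2)\norm m_2^2=\ip fm$. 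Combining these, we get
\[
 \norm{U^{n+1}}_2^2=\norm{U^n}_2^2+2\ip{\Phi\Delta W^n}{m^n}-2\lambda h\norm{m^n}_2^2,
\]
which is the same energy identity as before. The task is therefore to bound $\ip{\Phi\Delta W^n}{m^n}$ without using $U^{n+1}-U^n$. Write $m^n=(m^n-\tfrac12 f)+\tfrac12 f$. The piece
\[
 \tfrac12\ip{\Phi\Delta W^n}{f}=\tfrac12\ip{\Phi\Delta W^n}{U^n}+\tfrac14\norm{\Phi\Delta W^n}_2^2
\]
consists of a martingale increment plus a term with expectation of order $h\norm\Phi_{\HS}^2$. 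The remainder $\ip{\Phi\Delta W^n}{m^n-\tfrac12 f}$ I bound crudely by Cauchy--Schwarz and \eqref{eq:radial}:
\[
 \bigl|\ip{\Phi\Delta W^n}{m^n-\tfrac12 f}\bigr|\le\tfrac32\norm{\Phi\Delta W^n}_2\norm f_2\le\tfrac32\norm{\Phi\Delta W^n}_2\norm{U^n}_2+\tfrac34\norm{\Phi\Delta W^n}_2^2.
\]
This crude bound has size $h^{1/2}\norm{U^n}_2$, which is too large to sum. So I would instead refine it by an expansion in $h$. Write $m^n=f-\tfrac h2(\partial_x^3m+m\partial_xm+\lambda m)$. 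Then
\[
 \ip{\Phi\Delta W^n}{m^n}=\ip{\Phi\Delta W^n}{f}-\tfrac h2\ip{\partial_x^3\Phi\Delta W^n\cdot(-1)+\dots}{m},
\]
where the derivatives have been moved onto the noise by integration by parts. Using $\Phi\in\HS(\U,H^2_0)$, the terms are bounded as follows:
\[
 h\bigl|\ip{\partial_x^3\Phi\Delta W^n}{m}\bigr|,
\]
which needs $\Phi\Delta W^n$ in $H^3$ and so requires care. I would first try to handle it via $Q$: write $m=Q(f-\tfrac h2 m\partial_xm)$ and pass $Q^*$ onto $\Phi\Delta W^n$, using \eqref{eq:Q-contraction} and $\norm{Q^*-I}_{H^2\to L^2}\lesssim h\cdot$(three derivatives). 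In this setup the mean-zero correlation remains, namely $\ip{\Phi\Delta W^n}{Q^*{}^{-1}\cdots}$. The nonlinear piece is
\[
 h\,\ip{\partial_x\Phi\Delta W^n}{m^2}\le h\norm{\partial_x\Phi\Delta W^n}_\infty\norm{m}_2^2\lesssim h\norm{\Phi\Delta W^n}_{H^2}\norm f_2^2,
\]
which has size $h^{3/2}\norm f_2^2$ and is summable with Gronwall. The linear piece, $\tfrac h2\ip{\partial_x^3\Phi\Delta W^n}{m}$, is the most delicate: with only $H^2$ noise it should be handled through $Q$, using that $Q\Phi\Delta W^n$ has zero conditional mean. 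Concretely, I would apply the fixed-point form $m=Qf-\tfrac h4Q\partial_x(m^2)$, so that
\[
 \ip{\Phi\Delta W^n}{m}=\ip{Q^*\Phi\Delta W^n}{U^n}+\tfrac12\ip{Q^*\Phi\Delta W^n}{\Phi\Delta W^n}+\tfrac h4\ip{\partial_xQ^*\Phi\Delta W^n}{m^2}.
 \]
Here the first term is a genuine martingale increment, since $Q^*$ is deterministic. The second has expectation at most $\tfrac12 h\norm\Phi_{\HS(\U,L^2)}^2$ with all moments of order $h$. The third is bounded by $h\norm{Q^*\Phi\Delta W^n}_{H^2}\norm f_2^2$ using \eqref{eq:Q-contraction} and \eqref{eq:radial}. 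This decomposition is the step I regard as key.

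To conclude, sum the identity up to $n\le N=\lfloor T/h\rfloor$, take the maximum over $n$, raise to the power $p$, and take expectations. The martingale sum $\sum_k\ip{Q^*\Phi\Delta W^k}{U^k}$ is controlled by the discrete BDG inequality through
\[
 \E\Bigl(\sum_k h\norm\Phi_{\HS}^2\norm{U^k}_2^2\Bigr)^{p/2},
\]
and then absorbed with Young's inequality. The term $\sum_k h\,\xi_k\norm{f_k}_2^2$, with $\xi_k=\norm{Q^*\Phi\Delta W^k}_{H^2}$ independent of $\F_{t_k}$ and $\E\xi_k^q\lesssim h^{q/2}$, leads to a random-coefficient Gronwall problem. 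I expect this to be the main obstacle. I would handle it by conditioning step by step: prove $\E\norm{U^{n+1}}_2^{2p}\le(1+Ch)\E\norm{U^n}_2^{2p}+Ch$ by expanding the $p$-th power of the one-step identity and using independence of $\Delta W^n$ from $\F_{t_n}$. Since $h\xi_k\lesssim h^{3/2}$ in every moment, this contribution is of order $O(h)$ per step. The discrete Gronwall inequality then gives uniform-in-$n$ bounds, and a second pass with BDG upgrades these to the bound on the maximum in \eqref{eq:numerical}. Since all constants involve only $\norm\Phi_{\HS(\U,H^2)}$ and $\lambda\ge0$ enters only through the dropped nonnegative term, the resulting constant is independent of $\lambda$ and $h$.
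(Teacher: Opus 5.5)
Your proposal is correct and, once the abandoned detours are set aside, it is the paper's argument: the same discrete energy identity, the same key substitution $m^n=QU^n+\tfrac12Q\Phi\Delta W^n-\tfrac h4Q\partial_x((m^n)^2)$ with $Q^*$ moved onto the noise, and the same three resulting terms. These are a genuine martingale increment, an $O(h)$ noise term, and $\tfrac h4\ip{\partial_xQ^*\Phi\Delta W^n}{(m^n)^2}$, the last controlled via $H^2\hookrightarrow W^{1,\infty}$ and \eqref{eq:radial}; the argument then closes with discrete BDG and Gronwall.

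The only difference is the closing step, where the paper avoids your two-pass conditioning argument. It takes $L^p(\Omega)$ norms of the running maximum $E_j^*$ and uses independence to factor $\norm{h\norm{\partial_xQ^*\Phi\Delta W^n}_\infty\norm{U^n}_2^2}_{L^p(\Omega)}\le Ch^{3/2}\norm{E_n^*}_{L^p(\Omega)}$. This turns the random-coefficient problem you flag into a deterministic linear Gronwall inequality handled in one pass, with the crude bound $\norm{U^{n+1}}_2\le\norm{U^n}_2+\norm{\Phi\Delta W^n}_2$ supplying the a priori finiteness needed for the Young absorption.
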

\begin{proof}
Taking the $L^2$ inner product of \eqref{eq:scheme} with $U^{n+1}+U^n=2m^n\in H^3_0$ and integrating by parts, we obtain
\begin{equation}\label{eq:numerical-energy}
 \norm{U^{n+1}}_2^2-\norm{U^n}_2^2+2\lambda h\norm{m^n}_2^2
 =\ip{\Phi\Delta W^n}{U^{n+1}+U^n}.
\end{equation}
Since $\lambda\ge0$, applying the Cauchy--Schwarz inequality to \eqref{eq:numerical-energy} yields $\norm{U^{n+1}}_2\le\norm{U^n}_2+\norm{\Phi\Delta W^n}_2$. Let $N=\lfloor T/h\rfloor$. Iteration and Gaussian moments imply $\E\max_{0\le n\le N}\norm{U^n}_2^{2p}<\infty$ for every fixed $h>0$.

We separate the stochastic term in \eqref{eq:numerical-energy} into a martingale increment and terms controlled by the noise moments. Applying $Q$ to \eqref{eq:stage} with $f=U^n+\Phi\Delta W^n/2$ gives
\[
 m^n=QU^n+\frac12Q\Phi\Delta W^n-\frac h2Q(m^n\partial_x m^n).
\]
Substituting this identity into \eqref{eq:numerical-energy} and integrating the nonlinear term by parts, we obtain
\begin{equation}\label{eq:resolved-moment}
 \begin{split}
 \norm{U^{n+1}}_2^2-\norm{U^n}_2^2
 ={}&-2\lambda h\norm{m^n}_2^2
       +2\ip{\Phi\Delta W^n}{QU^n}\\
 &+\ip{\Phi\Delta W^n}{Q\Phi\Delta W^n}
       +\frac h2\int_\T(\partial_x Q^*\Phi\Delta W^n)(m^n)^2.
 \end{split}
\end{equation}
By \eqref{eq:radial} and \eqref{eq:Q-contraction},
\[
 \norm{m^n}_2^2\le2\norm{U^n}_2^2+\frac12\norm{\Phi\Delta W^n}_2^2,
 \qquad
 \bigl|\ip{\Phi\Delta W^n}{Q\Phi\Delta W^n}\bigr|
 \le\norm{\Phi\Delta W^n}_2^2.
\]
Hence, discarding the nonpositive damping term in \eqref{eq:resolved-moment}, we obtain
\begin{equation}\label{eq:moment-recursion}
 \begin{split}
 \norm{U^{n+1}}_2^2-\norm{U^n}_2^2
 \le{}&2\ip{\Phi\Delta W^n}{QU^n}
 +h\norm{\partial_x Q^*\Phi\Delta W^n}_\infty\norm{U^n}_2^2\\
 &+\left(1+\frac h4\norm{\partial_x Q^*\Phi\Delta W^n}_\infty\right)
       \norm{\Phi\Delta W^n}_2^2.
 \end{split}
\end{equation}

Set $E_j^*=\max_{0\le n\le j}\norm{U^n}_2^2$ for $0\le j\le N$. Summing \eqref{eq:moment-recursion} and taking the maximum over $0\le i\le j$ gives
\begin{multline}\label{eq:numerical-maximum}
 E_j^*\le\norm{u_0}_2^2+
       \max_{0\le i\le j}\Biggl|2\sum_{0\le n<i}\ip{\Phi\Delta W^n}{QU^n}\Biggr|\\
 +h\sum_{0\le n<j}\norm{\partial_x Q^*\Phi\Delta W^n}_\infty\norm{U^n}_2^2
 +\sum_{0\le n<j}
       \left(1+\frac h4\norm{\partial_x Q^*\Phi\Delta W^n}_\infty\right)
       \norm{\Phi\Delta W^n}_2^2.
\end{multline}
Since $U^n$ is $\F_{t_n}$-measurable and $\Phi\Delta W^n$ is independent of $\F_{t_n}$ and centered, the partial sums $2\sum_{0\le n<i}\ip{\Phi\Delta W^n}{QU^n}$ form a martingale with respect to $(\F_{t_i})$. The discrete Burkholder--Davis--Gundy (BDG) inequality, H\"older's inequality and Gaussian moments yield
\begin{equation}\label{eq:numerical-martingale}
 \begin{aligned}
 &\Biggl\|\max_{0\le i\le j}
       \Biggl|2\sum_{0\le n<i}\ip{\Phi\Delta W^n}{QU^n}\Biggr|\Biggr\|_{L^p(\Omega)}\\
 &\quad\le C_p\Biggl\|\Biggl(\sum_{0\le n<j}\norm{\Phi\Delta W^n}_2^2
                              \norm{U^n}_2^2\Biggr)^{1/2}\Biggr\|_{L^p(\Omega)}
 \le C_{p,\Phi}(jh)^{1/2}\norm{E_j^*}_{L^p(\Omega)}^{1/2}.
 \end{aligned}
\end{equation}
Here we used ${\bigl\|\sum_{0\le n<j}\norm{\Phi\Delta W^n}_2^2\bigr\|_{L^p(\Omega)}\le C_{p,\Phi}jh}$, together with $\norm{QU^n}_2\le\norm{U^n}_2$ and $\norm{U^n}_2^2\le E_j^*$ for $n\le j$.

The bound ${\norm{\partial_x Q^*\Phi\Delta W^n}_{L^p(\Omega;L^\infty)}\le C_{p,\Phi}h^{1/2}}$ follows from $H^2\hookrightarrow W^{1,\infty}$, \eqref{eq:Q-contraction} and Gaussian moments, uniformly for $\lambda\ge0$. Using the independence of $U^n$ and $\Phi\Delta W^n$ and H\"older's inequality, we obtain, for $h\le1$,
\begin{equation}\label{eq:numerical-noise-terms}
 \begin{aligned}
 &\left\|h\norm{\partial_x Q^*\Phi\Delta W^n}_\infty\norm{U^n}_2^2\right\|_{L^p(\Omega)}
 \le C_{p,\Phi}h^{3/2}\norm{E_n^*}_{L^p(\Omega)},\\
 &\left\|\left(1+\frac h4\norm{\partial_x Q^*\Phi\Delta W^n}_\infty\right)
                 \norm{\Phi\Delta W^n}_2^2\right\|_{L^p(\Omega)}
 \le C_{p,\Phi}(h+h^{5/2})\le C_{p,\Phi}h.
 \end{aligned}
\end{equation}
Taking $L^p(\Omega)$ norms in \eqref{eq:numerical-maximum} and using \eqref{eq:numerical-martingale}--\eqref{eq:numerical-noise-terms} gives
\[
 \begin{aligned}
 \norm{E_j^*}_{L^p(\Omega)}
 \le{}&\norm{u_0}_2^2+C(jh)^{1/2}\norm{E_j^*}_{L^p(\Omega)}^{1/2}\\
 &+Ch^{3/2}\sum_{0\le n<j}\norm{E_n^*}_{L^p(\Omega)}+Cjh.
 \end{aligned}
\]
Since $\norm{E_j^*}_{L^p(\Omega)}<\infty$, we apply Young's inequality and then discrete Gronwall, using $jh\le T$ and $Nh^{3/2}\le T$, to obtain $\norm{E_N^*}_{L^p(\Omega)}\le C_{p,T,\Phi}(1+\norm{u_0}_2^2)$. Taking the $p$th power yields \eqref{eq:numerical}, with a constant independent of $h$, $\lambda\ge0$ and the choice of solution.
\end{proof}
\section{Exponential integrability and comparison estimates}\label{sec:exact}

\subsection{Coercivity of a classical KdV invariant}

For $v\in H^2$, consider the classical KdV invariant~\cite{Lax1968} (see also~\cite[(2.6)]{GHMR2026}):
\begin{equation}\label{eq:J}
 \mathcal J(v)=\int_\T\left((\partial_x^2 v)^2-\frac53v(\partial_x v)^2+\frac5{36}v^4\right).
\end{equation}

\begin{lemma}[Positive decomposition and coercivity]\label{lem:sos}
For every real-valued $v\in H^2(\T)$, set $\rho=(|\partial_x v|^2/60)^{1/3}$. Then
\begin{equation}\label{eq:sos}
 \mathcal J(v)=\int_\T\bigl(\partial_x^2 v+\tfrac13v^2+v\rho-8\rho^2\bigr)^2
 +\int_\T\bigl(\tfrac16v^2-2v\rho+\rho^2\bigr)^2+7\int_\T\rho^4.
\end{equation}
There exist constants $c,C>0$ such that
\begin{equation}\label{eq:coercive}
 c\left(\norm{\partial_x^2 v}_2^2+\norm v_4^4\right)
 \le\mathcal J(v)
 \le C\left(\norm{\partial_x^2 v}_2^2+\norm v_4^4\right).
\end{equation}
\end{lemma}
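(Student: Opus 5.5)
The plan is to verify the identity \eqref{eq:sos} by expanding the squares and integrating by parts, and then to read both inequalities in \eqref{eq:coercive} off \eqref{eq:sos} using only Young's inequality. Write $w=\partial_x v$ and $a=\partial_x^2 v$, so that $w^2=60\rho^3$ pointwise.

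\textbf{Step 1: integration-by-parts identities.} For $v\in H^2$ we have $w\in H^1\subset C(\T)$, and all identities below are between integrable functions. There are three of them.
\begin{itemize}
\item First, $\int v^2a=-2\int vw^2$.
\item Second, $\rho^2=(w^2/60)^{2/3}$ is a continuous function $F(w)$ of $w$. With $G(s)=\int_0^sF$, the function $G(w)$ is $C^1$ in $w$, and the chain rule in $H^1$ gives $\int\rho^2a=\int\partial_x G(w)=0$ by periodicity.
\item Third, set $\tilde G(s)=\int_0^s(\sigma^2/60)^{1/3}\dd\sigma=\tfrac35 s(s^2/60)^{1/3}$, so that $\tilde G(w)=\tfrac35 w\rho$. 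Then
\[
\int v\rho a=\int v\,\partial_x\tilde G(w)=-\int w\,\tilde G(w)=-\tfrac35\int w^2\rho=-36\int\rho^4 .
\]
\end{itemize}
The justification of these chain-rule steps at the low regularity $w\in H^1$, where $\tilde G$ is only $C^1$ near $0$, is the one delicate point. I would handle it by mollifying $v$ and passing to the limit.

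\textbf{Step 2: expansion.} Expand both squares in \eqref{eq:sos}. The cross terms involving $a$ are $\tfrac23 v^2a$, $2v\rho a$ and $-16\rho^2a$. By Step 1 these contribute $-\tfrac43\int vw^2=-80\int v\rho^3$, $-72\int\rho^4$ and $0$, respectively.

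The remaining terms group by monomial as follows.
\begin{itemize}
\item $v^3\rho$: the coefficients $\tfrac23$ and $-\tfrac23$ cancel.
\item $v^2\rho^2$: the coefficients sum to $1-\tfrac{16}3+4+\tfrac13=0$.
\item $v\rho^3$: the coefficients $-16-4$ combine with the $-80$ from Step 1 to give $-100\,v\rho^3$, which equals $-\tfrac53 vw^2$.
\item $v^4$: the coefficients sum to $\tfrac19+\tfrac1{36}=\tfrac5{36}$.
\item $\rho^4$: the coefficients sum to $64+1+7-72=0$.
\end{itemize}
Together with $a^2$, this recovers exactly the integrand of $\mathcal J$ in \eqref{eq:J}, which proves \eqref{eq:sos}.

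\textbf{Step 3: lower bound.} Denote the two squared brackets in \eqref{eq:sos} by $P$ and $S$, so that $\mathcal J=\int P^2+\int S^2+7\int\rho^4$.
\begin{itemize}
\item Control of $v^4$. By Young's inequality, $2|v|\rho\le v^2/12+12\rho^2$, hence $v^2/12\le S+11\rho^2$. Squaring and integrating gives $\int v^4\le C(\int S^2+\int\rho^4)\le C\mathcal J$.
\item Control of $a^2$. Write $a=P-q$ with $q=\tfrac13v^2+v\rho-8\rho^2$. Then $\norm a_2^2\le2\int P^2+2\norm q_2^2$, and $\norm q_2^2\le C(\int v^4+\int\rho^4)\le C\mathcal J$.
\end{itemize}
Combining the two bullets gives the lower bound in \eqref{eq:coercive}.

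\textbf{Step 4: upper bound.} Use $\int vw^2=-\tfrac12\int v^2a$ to get
\[
\bigl|\tfrac53\int vw^2\bigr|\le\tfrac56\norm v_4^2\norm a_2\le\tfrac5{12}\bigl(\norm v_4^4+\norm a_2^2\bigr).
\]
Substituting this into \eqref{eq:J} yields the upper bound in \eqref{eq:coercive}.
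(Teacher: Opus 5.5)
Your proposal is correct and follows the paper's proof. It uses the same three integration-by-parts identities, the same expansion of the squares, extension to $H^2$ by density or mollification, and the same upper bound via $\int v(\partial_x v)^2=-\tfrac12\int v^2\partial_x^2 v$. The one difference is how you control $\norm v_4^4$: your explicit Young's inequality giving $v^2/12\le S+11\rho^2$ replaces the paper's homogeneity-and-compactness argument for the quartic form, and it yields explicit constants.
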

\begin{proof}
For smooth $v$, the identity $(\partial_x v)^2=60\rho^3$ and periodic integration by parts give
\[
 \int_\T v^2\partial_x^2 v=-120\int_\T v\rho^3,
 \qquad
 \int_\T v\rho \partial_x^2 v=-36\int_\T\rho^4,
 \qquad
 \int_\T\rho^2\partial_x^2 v=0.
\]
The last two identities follow from the chain rule and periodic integration by parts. Expanding the squares proves \eqref{eq:sos}, which extends to $H^2$ by a density argument using the embedding $H^2\hookrightarrow C^1$.

Positivity and homogeneity of the quartic form, together with compactness of the unit $\ell^4$-sphere, give a constant $c_0>0$ such that
\[
 \left(\frac16v^2-2v\rho+\rho^2\right)^2+7\rho^4
 \ge c_0(v^4+\rho^4).
\]
Using $\norm v_4^4+\norm\rho_4^4\le C\mathcal J(v)$ together with the first integral in \eqref{eq:sos}, we obtain
\[
 \norm{\partial_x^2 v}_2^2+\norm v_4^4
 \le2\int_\T\left(\partial_x^2 v+\frac13v^2+v\rho-8\rho^2\right)^2
       +C\left(\norm v_4^4+\norm\rho_4^4\right)
 \le C\mathcal J(v).
\]
For the upper bound, integration by parts and the Cauchy--Schwarz inequality give
\[
 \mathcal J(v)=\norm{\partial_x^2 v}_2^2+\frac56\int_\T v^2\partial_x^2 v
                   +\frac5{36}\norm v_4^4
 \le C\left(\norm{\partial_x^2 v}_2^2+\norm v_4^4\right).
\]
\end{proof}

Define
\begin{equation}\label{eq:VG}
 V(v)=1+\mathcal J(v),\qquad G(v)=V(v)^{3/7}.
\end{equation}
By Lemma~\ref{lem:sos}, $V\ge1$, so $G$ is a smooth functional on $H^2$.

\begin{lemma}[Control of the spatial derivative]\label{lem:derivative-control}
There exists $C>0$ such that
\begin{equation}\label{eq:derivative-bound}
 \norm{\partial_x v}_\infty\le C G(v),\qquad v\in H^2.
\end{equation}
\end{lemma}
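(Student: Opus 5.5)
We need $\norm{\partial_x v}_\infty \le C(1+\mathcal J(v))^{3/7}$. By Lemma~\ref{lem:sos}, $\mathcal J(v)$ controls $\norm{\partial_x^2 v}_2^2+\norm v_4^4$, so it suffices to prove the interpolation bound
\[
 \norm{\partial_x v}_\infty\le C\bigl(1+\norm{\partial_x^2v}_2^2+\norm v_4^4\bigr)^{3/7},
\]
which we obtain from Gagliardo--Nirenberg interpolation between $\norm{\partial_x^2v}_2$ and $\norm v_4$. By scaling, the exponents must be as follows. Write $\norm{\partial_x v}_\infty\lesssim\norm{\partial_x^2v}_2^{\theta}\norm v_4^{1-\theta}$ plus a lower-order term; periodic functions need not have mean zero here, but $\partial_x v$ does. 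Dimensional counting $1+0=\theta(2-\tfrac12)+(1-\theta)(0+\tfrac14)$, i.e.\ $1=\tfrac32\theta+\tfrac14-\tfrac14\theta$, gives $\theta=\tfrac35$. Then, with $a=\norm{\partial_x^2v}_2\le \mathcal J^{1/2}$ and $b=\norm v_4\le\mathcal J^{1/4}$ up to constants, we get
\[
 a^{3/5}b^{2/5}\lesssim \mathcal J^{3/10+1/10}=\mathcal J^{2/5}.
\]
Since $2/5<3/7$, the bound follows after using $V\ge1$. This leaves room, so we need not be sharp, and we can choose a simpler chain.

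Concretely, we plan these steps. First, since $\partial_x v$ has zero mean, we have $\norm{\partial_x v}_\infty\le C\norm{\partial_x v}_2^{1/2}\norm{\partial_x^2 v}_2^{1/2}$ (the 1D Agmon inequality on $\T$ for mean-zero functions). Second, we bound $\norm{\partial_x v}_2^2=-\int v\,\partial_x^2 v\le\norm v_2\norm{\partial_x^2v}_2\le\norm v_4\norm{\partial_x^2 v}_2$ (normalized measure, so $\norm v_2\le\norm v_4$). Combining the two,
\[
 \norm{\partial_x v}_\infty\le C\norm v_4^{1/4}\norm{\partial_x^2v}_2^{3/4}.
\]
Third, inserting $\norm v_4\le C\mathcal J^{1/4}$ and $\norm{\partial_x^2v}_2\le C\mathcal J^{1/2}$ from \eqref{eq:coercive} gives $\norm{\partial_x v}_\infty\le C\mathcal J^{1/16+3/8}=C\mathcal J^{7/16}$.

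This exponent $7/16$ slightly exceeds $3/7$, so the crude chain is insufficient, and this is the main obstacle. We fix it by using the sharper interpolation $\norm{\partial_x v}_2\lesssim\norm{\partial_x^2 v}_2^{3/7}\norm v_4^{4/7}$ (Gagliardo--Nirenberg for the $L^4$ endpoint: scaling $1=\theta\cdot2+(1-\theta)\cdot\tfrac14-\ldots$ on the $L^2$-level, i.e.\ $1-\tfrac12=\theta(2-\tfrac12)+(1-\theta)(-\tfrac14)$, giving $\theta=3/7$). Then Agmon yields
\[
 \norm{\partial_x v}_\infty\lesssim\norm{\partial_x^2v}_2^{1/2+3/14}\norm v_4^{2/7}=\norm{\partial_x^2v}_2^{5/7}\norm v_4^{2/7}\lesssim\mathcal J^{5/14+1/14}=\mathcal J^{3/7}.
\]
This exactly explains the exponent in \eqref{eq:VG}. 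The interpolation is justified on $\T$ for mean-zero $\partial_x v$ via the Fourier splitting or standard GN with a lower-order term $\norm v_4$, which is also $\lesssim V^{3/7}$ since $1/4<3/7$ and $V\ge1$. Finally, $\mathcal J\le V$ gives \eqref{eq:derivative-bound}, and the constant is uniform by the density of smooth functions in $H^2$.
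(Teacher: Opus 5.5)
There is a genuine gap. The ``sharper interpolation'' $\norm{\partial_x v}_2\lesssim\norm{\partial_x^2v}_2^{3/7}\norm v_4^{4/7}$, on which your final step rests, is false. Scaling does force $\theta=3/7$, but the Gagliardo--Nirenberg inequality also requires $j/m\le\theta\le1$, which here means $\theta\ge1/2$, and $3/7<1/2$. Concretely, $v=\sin(kx)$ gives $\norm{\partial_x v}_2\sim k$, while the right-hand side is $\sim k^{6/7}$. Adding a lower-order term $C\norm v_4$ does not help. At the $L^2$ level the best exponent is $\theta=1/2$, since high-frequency functions saturate $\norm{\partial_xv}_2^2\le\norm v_2\norm{\partial_x^2v}_2$. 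So any argument that passes through $\norm{\partial_x v}_2$ via Agmon's inequality stops at $\mathcal J^{7/16}$. The obstacle you found is intrinsic to that route; no cleverer $L^2$ interpolation removes it.

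The missing step is to interpolate directly at the $L^\infty$ level, which is exactly what the paper does:
$\norm{\partial_x v}_\infty\le C\norm{\partial_x^2 v}_2^{5/7}\norm v_4^{2/7}+C\norm v_4$.
This is admissible since $5/7\ge1/2$. On $\T$ one way to see it is to split at frequency $N$. Bernstein bounds the low modes by $N^{5/4}\norm v_4$, Cauchy--Schwarz bounds the high modes by $N^{-1/2}\norm{\partial_x^2v}_2$, and one then optimizes in $N$. Your opening scaling computation was aiming at this, but it has a sign error. The $L^4$ factor scales like $\mu^{-1/4}$, so the relation is $1=\tfrac32\theta-\tfrac14(1-\theta)$, which gives $\theta=5/7$ rather than $3/5$. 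With the correct exponent, \eqref{eq:coercive} gives $\norm{\partial_x^2v}_2^{5/7}\norm v_4^{2/7}\lesssim V^{5/14}V^{1/14}=V^{3/7}$ exactly. There is no room to spare, which is why the exponent $3/7$ is chosen in \eqref{eq:VG}. The lower-order term is then handled by $V^{1/4}\le V^{3/7}$, as you say.
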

\begin{proof}
The Gagliardo--Nirenberg inequality, \eqref{eq:coercive} and $V\ge1$ give
\[
 \begin{aligned}
 \norm{\partial_x v}_\infty
 &\le C\norm{\partial_x^2 v}_2^{5/7}\norm v_4^{2/7}+C\norm v_4\\
 &\le C\bigl(V^{1/2}\bigr)^{5/7}\bigl(V^{1/4}\bigr)^{2/7}+CV^{1/4}
 \le CV^{3/7}=CG(v).
 \end{aligned}
\]
\end{proof}

Assume $\Phi\in\HS(\U,H^2_0)$ and let $(e_j)$ be an orthonormal basis of $\U$. We denote first and second Fr\'echet derivatives by $\mathrm D$ and $\mathrm D^2$.

\begin{lemma}[Bounds for the drift and quadratic-variation density]\label{lem:generator}
There exist constants $C_0,C_1,C_2>0$ such that for every $\lambda\ge0$ and smooth real-valued $v$,
\begin{equation}\label{eq:generator}
 \begin{aligned}
 &-\mathrm DG(v)[\partial_x^3v+v\partial_x v+\lambda v]
    +\frac12\sum_j\mathrm D^2G(v)[\Phi e_j,\Phi e_j]\\
 &\qquad\le C_0\norm\Phi_{\HS(\U,H^2)}^2+C_1\lambda G(v),\\
 &\sum_j|\mathrm DG(v)[\Phi e_j]|^2
    \le C_2\norm\Phi_{\HS(\U,H^2)}^2G(v)^{5/6}.
 \end{aligned}
\end{equation}
\end{lemma}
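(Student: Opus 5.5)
The plan is to work through the chain rule for $G=V^{3/7}$. For $\phi,\psi$ smooth,
\[
 \mathrm DG(v)[\phi]=\tfrac37V^{-4/7}\mathrm D\mathcal J(v)[\phi],\qquad
 \mathrm D^2G(v)[\phi,\phi]=\tfrac37V^{-4/7}\mathrm D^2\mathcal J(v)[\phi,\phi]-\tfrac{12}{49}V^{-11/7}\bigl(\mathrm D\mathcal J(v)[\phi]\bigr)^2 .
\]
The second term is nonpositive, so it can be discarded in the first inequality of \eqref{eq:generator}. Everything then reduces to bounds on $\mathrm D\mathcal J$ and $\mathrm D^2\mathcal J$ in terms of powers of $V$. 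These bounds come from Lemma~\ref{lem:sos}, which gives $\norm{\partial_x^2v}_2\le CV^{1/2}$ and $\norm v_4\le CV^{1/4}$. We combine this with the interpolation bounds $\norm{\partial_xv}_2^2\le\norm{\partial_x^2v}_2\norm v_2$ and $\norm v_2\le\norm v_4$. Since $V\ge1$, any smaller power of $V$ may always be replaced by a larger one.

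\emph{Drift.} The key point is that $\mathcal J$ is a conserved quantity of the deterministic KdV flow $\partial_tv+\partial_x^3v+v\partial_xv=0$ (Lax~\cite{Lax1968}). Hence $\mathrm D\mathcal J(v)[\partial_x^3v+v\partial_xv]=0$ for smooth periodic $v$. I would verify this directly by expanding
\[
 \mathrm D\mathcal J(v)[\phi]=2\int\partial_x^2v\,\partial_x^2\phi-\tfrac53\int\bigl(\phi(\partial_xv)^2+2v\partial_xv\,\partial_x\phi\bigr)+\tfrac59\int v^3\phi
\]
and integrating by parts. This bookkeeping is the main (though routine) obstacle: the coefficients $-\tfrac53,\tfrac5{36}$ are tuned exactly so that all the cubic and quartic terms cancel.

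The damping term is $\mathrm D\mathcal J(v)[v]=2\norm{\partial_x^2v}_2^2-5\int v(\partial_xv)^2+\tfrac59\norm v_4^4$. Its middle term is bounded by $\norm v_4\norm{\partial_xv}_{8/3}^2$. Gagliardo--Nirenberg applied to $\norm{\partial_xv}_{8/3}$ (interpolating between $\norm{\partial_x^2v}_2$ and $\norm v_4$), followed by Young's inequality, gives the bound $C(\norm{\partial_x^2v}_2^2+\norm v_4^4)\le CV$. Therefore $|\lambda\,\mathrm DG(v)[v]|\le C\lambda V^{-4/7}V=C_1\lambda G(v)$.

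\emph{Itô correction.} We have
\[
 \mathrm D^2\mathcal J(v)[\phi,\phi]=2\norm{\partial_x^2\phi}_2^2-\tfrac53\Bigl(2\int v(\partial_x\phi)^2+4\int\phi\,\partial_x\phi\,\partial_xv\Bigr)+\tfrac53\int v^2\phi^2 .
\]
Using $H^2\hookrightarrow W^{1,\infty}$, this is bounded by $C\norm\phi_{H^2}^2(1+\norm v_2+\norm{\partial_xv}_2+\norm v_2^2)\le C\norm\phi_{H^2}^2V^{1/2}$. Multiplying by $V^{-4/7}$ leaves $V^{-1/14}\le1$. Summing over $\phi=\Phi e_j$ gives the constant $C_0\norm\Phi_{\HS(\U,H^2)}^2$.

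\emph{Quadratic variation.} From the formula for $\mathrm D\mathcal J$,
\[
 |\mathrm D\mathcal J(v)[\phi]|\le C\norm\phi_{H^2}\bigl(\norm{\partial_x^2v}_2+\norm{\partial_xv}_2^2+\norm v_2\norm{\partial_xv}_2+\norm v_4^3\bigr)\le C\norm\phi_{H^2}V^{3/4}.
\]
Here the powers are $V^{1/2}$, $V^{1/2}V^{1/4}$, $V^{1/4}V^{1/2}$ and $V^{3/4}$ respectively. Hence $|\mathrm DG(v)[\phi]|^2\le C\norm\phi_{H^2}^2V^{-8/7+3/2}=C\norm\phi_{H^2}^2V^{5/14}$. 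Since $G^{5/6}=V^{15/42}=V^{5/14}$, summing over $j$ yields the second inequality in \eqref{eq:generator}. All constants are independent of $\lambda$.
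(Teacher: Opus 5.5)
Your proposal is correct and follows the paper's proof essentially step by step. Both arguments use the chain rule for $G=V^{3/7}$ and drop the nonpositive term in $\mathrm D^2G$. Both use conservation of $\mathcal J$ under the KdV flow to remove the dispersive and nonlinear parts of the drift. Both rest on the bounds $|\mathrm D\mathcal J(v)[v]|\le CV$, $\sum_j|\mathrm D^2\mathcal J(v)[\Phi e_j,\Phi e_j]|\le C\norm\Phi_{\HS(\U,H^2)}^2V^{1/2}$ and $|\mathrm D\mathcal J(v)[\phi]|\le C\norm\phi_{H^2}V^{3/4}$, which give exactly the exponents $V^{-1/14}$ and $V^{5/14}=G^{5/6}$. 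The differences are cosmetic: you work with the un-integrated forms of $\mathrm D\mathcal J$ and $\mathrm D^2\mathcal J$, and you bound $\int_\T v(\partial_x v)^2$ by H\"older and Gagliardo--Nirenberg, whereas the paper writes it as $-\frac12\int_\T v^2\partial_x^2 v$ and applies Cauchy--Schwarz.
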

\begin{proof}
For any real-valued $\phi\in H^2$, differentiation and integration by parts give
\begin{align}
 \mathrm D\mathcal J(v)[\phi]
 ={}&2\ip{\partial_x^2 v}{\partial_x^2 \phi}-\frac53\int_\T\phi(\partial_x v)^2
          +\frac53\int_\T v^2\partial_x^2 \phi+\frac59\int_\T v^3\phi,\label{eq:DJ}\\
 \mathrm D^2\mathcal J(v)[\phi,\phi]
 ={}&2\norm{\partial_x^2 \phi}_2^2+\frac{10}{3}\int_\T v(\partial_x \phi)^2
          +\frac{20}{3}\int_\T v\phi\partial_x^2 \phi+\frac53\int_\T v^2\phi^2.\label{eq:DDJ}
\end{align}
Set $\phi_j=\Phi e_j$. By \eqref{eq:DDJ}, $H^2\hookrightarrow W^{1,\infty}$ and \eqref{eq:coercive}, the trace series converges absolutely and satisfies
\begin{equation}\label{eq:hessian}
 \sum_j|\mathrm D^2\mathcal J(v)[\phi_j,\phi_j]|
 \le C\norm\Phi_{\HS(\U,H^2)}^2(1+\norm v_2+\norm v_2^2)
 \le C\norm\Phi_{\HS(\U,H^2)}^2V^{1/2}.
\end{equation}
Setting $\phi=v$ in \eqref{eq:DJ} and integrating by parts gives
\[
 \mathrm D\mathcal J(v)[v]=2\norm{\partial_x^2 v}_2^2-5\int_\T v(\partial_x v)^2+\frac59\norm v_4^4.
\]
Using $\int_\T v(\partial_x v)^2=-\frac12\int_\T v^2\partial_x^2 v$ and \eqref{eq:coercive} yields $|\mathrm D\mathcal J(v)[v]|\le CV$.

Since $G=V^{3/7}$, the chain rule gives
\[
 \begin{aligned}
 \mathrm DG(v)[\phi]
 &=\frac37V^{-4/7}\mathrm D\mathcal J(v)[\phi],\\
 \mathrm D^2G(v)[\phi,\phi]
 &=\frac37V^{-4/7}\mathrm D^2\mathcal J(v)[\phi,\phi]
   -\frac{12}{49}V^{-11/7}|\mathrm D\mathcal J(v)[\phi]|^2.
 \end{aligned}
\]
Conservation of $\mathcal J$ under the deterministic KdV flow gives $\mathrm D\mathcal J(v)[\partial_x^3v+v\partial_x v]=0$, so the KdV contribution vanishes. Dropping the nonpositive term in $\mathrm D^2G$ and using \eqref{eq:hessian}, $|\mathrm D\mathcal J(v)[v]|\le CV$, and $V\ge1$, we obtain
\[
 \begin{aligned}
 &-\mathrm DG(v)[\partial_x^3v+v\partial_x v+\lambda v]
    +\frac12\sum_j\mathrm D^2G(v)[\phi_j,\phi_j]\\
 &\quad\le C\lambda V^{3/7}+C\norm\Phi_{\HS(\U,H^2)}^2V^{-1/14}
 \le C_1\lambda G(v)+C_0\norm\Phi_{\HS(\U,H^2)}^2.
 \end{aligned}
\]

Using $\norm{\partial_x v}_2^2\le\norm v_2\norm{\partial_x^2 v}_2$, $\norm v_2\le\norm v_4$, \eqref{eq:DJ}, the triangle inequality in $\ell^2$, and \eqref{eq:coercive}, we obtain
\begin{align}\label{eq:gradient}
 \left(\sum_j|\mathrm D\mathcal J(v)[\phi_j]|^2\right)^{1/2}
 &\le C\norm\Phi_{\HS(\U,H^2)}
       (\norm{\partial_x^2 v}_2+\norm{\partial_x v}_2^2+\norm v_4^2+\norm v_4^3)\notag\\
 &\le C\norm\Phi_{\HS(\U,H^2)}V^{3/4}.
\end{align}
Substituting \eqref{eq:gradient} into the formula for $\mathrm DG$ yields
\[
 \begin{aligned}
 \sum_j|\mathrm DG(v)[\phi_j]|^2
 &=\frac9{49}V^{-8/7}\sum_j|\mathrm D\mathcal J(v)[\phi_j]|^2\\
 &\le C\norm\Phi_{\HS(\U,H^2)}^2V^{5/14}
 =C\norm\Phi_{\HS(\U,H^2)}^2G^{5/6}.
 \end{aligned}
\]
\end{proof}

\subsection{Comparison solutions and their properties}
Assume that $u_0\in H^{9/2}_0$ is deterministic and $\Phi\in\HS(\U,H^{9/2}_0)$. For an integer $K\ge1$, let $P_K$ denote the Fourier projection onto the modes $0<|k|\le K$, and consider
\begin{equation}\label{eq:smooth}
 \dd u_K+\left(\partial_x^3u_K+u_K\partial_x u_K+\lambda u_K\right)\dd t=P_K\Phi\dd W,
 \qquad u_K(0)=P_Ku_0.
\end{equation}
The nonlinear term is not projected, so $u_K$ need not have Fourier support in $|k|\le K$.

Denote by $\langle D\rangle^q$ the operator $(I-\partial_x^2)^{q/2}$. We assert the following estimates on $\T$:
\begin{gather}
 \norm{fg}_{H^q}\le C_{q,q_0}\bigl(\norm f_{H^q}\norm g_{H^{q_0}}+
               \norm f_{H^{q_0}}\norm g_{H^q}\bigr),
      \qquad q\ge0,\quad q_0>1/2,\label{eq:product}\\
 |\ip{\langle D\rangle^q(v\partial_x v)}{\langle D\rangle^qv}|\le C_q\norm{\partial_x v}_\infty\norm v_{H^q}^2,
      \qquad q>3/2.\label{eq:tame}
\end{gather}
The product estimate \eqref{eq:product} is standard. To obtain \eqref{eq:tame}, we use the one-dimensional specialization of the Kato--Ponce estimate in \cite[Lemma 2.2]{LinaresEtAl2018} (see also \cite{KatoPonce1988}):
\begin{equation}\label{eq:periodic-commutator}
 \norm{[\langle D\rangle^q,f]g}_2
 \le C_q\left[\norm f_{H^q}\norm g_\infty
       +(\norm f_\infty+\norm{\partial_x f}_\infty)\norm g_{H^{q-1}}\right],
\end{equation}
for smooth periodic functions $f,g$ and $q\ge1$, where $[\langle D\rangle^q,f]g=\langle D\rangle^q(fg)-f\langle D\rangle^qg$.

For smooth $v$, we apply \eqref{eq:periodic-commutator} with $f=v-\int_\T v$ and $g=\partial_x v$ and, since constants commute with $\langle D\rangle^q$, use the bounds
\[
 \norm{v-\int_\T v}_\infty\le2\pi\norm{\partial_x v}_\infty,\qquad
 \norm{v-\int_\T v}_{H^q}\le\norm v_{H^q},\qquad
 \norm{\partial_x v}_{H^{q-1}}\le\norm v_{H^q},
\]
to obtain $\norm{[\langle D\rangle^q,v]\partial_x v}_2\le C_q\norm{\partial_x v}_\infty\norm v_{H^q}$.
Using $[\langle D\rangle^q,\partial_x]=0$ and periodic integration by parts gives
\begin{equation}\label{eq:transport-pairing}
 \begin{aligned}
 \ip{\langle D\rangle^q(v\partial_x v)}{\langle D\rangle^qv}
 &=\ip{v\partial_x\langle D\rangle^qv}{\langle D\rangle^qv}
   +\ip{[\langle D\rangle^q,v]\partial_x v}{\langle D\rangle^qv}\\
 &=-\frac12\int_\T (\partial_x v)|\langle D\rangle^qv|^2
   +\ip{[\langle D\rangle^q,v]\partial_x v}{\langle D\rangle^qv}.
 \end{aligned}
\end{equation}
The Cauchy--Schwarz inequality now proves \eqref{eq:tame}. For $q>3/2$, a density argument using $H^q(\T)\hookrightarrow W^{1,\infty}(\T)$ extends the estimate to $v\in H^q(\T)$, with the pairing in \eqref{eq:tame} given by the last line of \eqref{eq:transport-pairing}.

\begin{proposition}[Comparison solutions]\label{prop:smooth}
For each integer $K\ge1$ and $\lambda\ge0$, equation \eqref{eq:smooth} has a unique global adapted solution $u_K$ with $u_K\in C([0,T];H_0^m)$ almost surely for every finite $T>0$ and integer $m\ge3$.
\end{proposition}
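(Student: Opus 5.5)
The plan is to reduce \eqref{eq:smooth} to a random PDE with smooth coefficients and solve it pathwise. Let $S_\lambda(t)=e^{-t(\partial_x^3+\lambda)}$, which is an isometry (up to the factor $e^{-\lambda t}$) on every $H^q_0$. Introduce the stochastic convolution
\[
 Z_K(t)=\int_0^tS_\lambda(t-s)P_K\Phi\dd W(s).
\]
Since $P_K\Phi$ has range in the finite-dimensional space spanned by the modes $0<|k|\le K$, $Z_K$ is a finite-dimensional Gaussian process. It is adapted, has continuous paths in $H^m_0$ for every $m$, and satisfies $\E\sup_{t\le T}\norm{Z_K(t)}_{H^m}^p<\infty$. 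Writing $u_K=v+Z_K$, the remainder $v$ must solve
\[
 \partial_tv+\partial_x^3v+\lambda v+(v+Z_K)\partial_x(v+Z_K)=0,\qquad v(0)=P_Ku_0,
\]
pathwise, with a forcing $Z_K\in C([0,T];H^m)$ for all $m$ and smooth initial data. Existence, uniqueness and regularity for \eqref{eq:smooth} are thus equivalent to those for this equation, $\omega$ by $\omega$.

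\textbf{Local theory.} For fixed $\omega$ and integer $m\ge3$, I will obtain a local solution in $C([0,T_*];H^m_0)$. The plan is to use Galerkin approximations, or Kato's parabolic regularization $-\varepsilon\partial_x^4$. The key energy estimate comes from \eqref{eq:tame}. Pairing $\langle D\rangle^m$ of the equation with $\langle D\rangle^mv$, the Airy term drops out by skew-adjointness. The self-interaction term is bounded by $C\norm{\partial_xv}_\infty\norm v_{H^m}^2$. The cross terms $Z_K\partial_xv$, $v\partial_xZ_K$ and $Z_K\partial_xZ_K$ are bounded, via \eqref{eq:product} and a commutator bound of the form \eqref{eq:periodic-commutator}, by $C(\norm{Z_K}_{H^{m+1}})(1+\norm{\partial_xv}_\infty)(1+\norm v_{H^m}^2)$. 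This yields a uniform local bound, and compactness then gives a solution. Continuity in time with values in $H^m$, rather than weak continuity, follows by the standard Bona--Smith argument.

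\textbf{Uniqueness.} For two solutions $v,\tilde v$, the $L^2$ energy of $w=v-\tilde v$ satisfies
\[
 \frac{d}{dt}\norm w_2^2\le C\bigl(\norm{\partial_xv}_\infty+\norm{\partial_x\tilde v}_\infty+\norm{\partial_xZ_K}_\infty\bigr)\norm w_2^2,
\]
so Gronwall gives $w\equiv0$. The same estimate shows that the solution is a continuous functional of the path $Z_K|_{[0,t]}$. This gives measurability and hence adaptedness of $u_K$. Because the bounds above hold for every $m$, the maximal existence time does not depend on $m$, and the statement for all integers $m\ge3$ reduces to $m=3$ plus propagation of regularity.

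\textbf{Global existence (main obstacle).} By the Beale--Kato--Majda-type form of the $H^m$ estimate, it suffices to bound $\int_0^{T}\norm{\partial_xv}_\infty\dd t$ on finite intervals. By Lemma~\ref{lem:derivative-control}, this reduces to a pathwise bound on $\mathcal J(v)$, or equivalently on $\norm v_{H^2}$, via \eqref{eq:coercive}.
\begin{itemize}
\item \emph{$L^2$ bound.} Pairing with $v$ kills the Airy term and the $v\partial_xv$ term. The remaining terms give $\frac d{dt}\norm v_2^2\le C(Z_K)(1+\norm v_2^2)$.
\item \emph{$H^2$ bound.} Differentiate $\mathcal J(v(t))$ in time. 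The pure KdV contribution vanishes by conservation of $\mathcal J$, exactly as in Lemma~\ref{lem:generator}. Using \eqref{eq:DJ}, the damping and $Z_K$-interaction terms are bounded by $C(Z_K)(1+\lambda)\bigl(1+\norm{\partial_x^2v}_2^2+\norm v_4^4\bigr)$. The lower-order pieces are controlled by Gagliardo--Nirenberg and the $L^2$ bound. By \eqref{eq:coercive}, this gives $\frac d{dt}V(v)\le C(\omega,T)V(v)$.
\end{itemize}
Gronwall then bounds $\sup_{t\le T}\norm v_{H^2}$, hence $\sup_{t\le T}\norm{\partial_xv}_\infty$, and the $H^m$ estimate closes globally for every $m$. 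The delicate point is checking that every $Z_K$-dependent term in $\frac d{dt}\mathcal J(v)$ is at most linear in $V$. The worst candidates are the terms $\int v^3\partial_xZ_K\,v$-type quartics and the terms with two derivatives on $v$. If this fails, the fallback is to apply Itô's formula to $\mathcal J(u_K)$ directly using Lemma~\ref{lem:generator}, and to obtain a stopping-time bound that is finite almost surely.
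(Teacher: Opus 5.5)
Your route can be made to work, but its decisive step is not established as written, and the justification you indicate would fail. The bound $\frac{\mathrm d}{\mathrm dt}V(v)\le C(\omega,T)V(v)$ does not follow from \eqref{eq:DJ} by estimating term by term. The dangerous terms are also not the quartics, since $|\int_\T\partial_xZ_K\,v^4|\le\norm{\partial_xZ_K}_\infty\norm v_4^4$ is harmless. They are cubic in $\partial_xv$. With $\phi=-\partial_x(Z_Kv)$, the term $-\frac53\int_\T\phi(\partial_xv)^2$ contains $+\frac53\int_\T Z_K(\partial_xv)^3$. After integrating by parts, the term $\frac53\int_\T v^2\partial_x^2\phi$ contains $-\frac53\int_\T Z_K(\partial_xv)^3$. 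Each is of order $V^{8/7}$ under the KdV scaling, so neither can be bounded by $CV$.

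The missing idea is that they cancel exactly. This is because $\mathcal J$ is translation invariant, i.e.\ $\mathrm D\mathcal J(v)[\partial_xv]=0$, so the transport term acts only through derivatives of $Z_K$. With $Z=Z_K$, a direct computation gives
\[
 \mathrm D\mathcal J(v)[\partial_x(Zv)]
 =\int_\T\Bigl(5\,\partial_xZ\,(\partial_x^2v)^2+6\,\partial_x^2Z\,\partial_xv\,\partial_x^2v
 +2\,\partial_x^3Z\,v\,\partial_x^2v-\tfrac{20}3\,\partial_xZ\,v(\partial_xv)^2
 +\tfrac{10}9\,\partial_x^3Z\,v^3+\tfrac5{12}\,\partial_xZ\,v^4\Bigr).
\]
Every term is at most $C\norm Z_{W^{3,\infty}}V(v)$. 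For the fourth, take $\rho$ and the bound $\norm\rho_4^4\le C\mathcal J(v)$ from the proof of Lemma~\ref{lem:sos}; then $\int_\T|v|(\partial_xv)^2=60\int_\T|v|\rho^3\le60\norm v_4\norm\rho_4^3\le CV(v)$. The forcing term $\mathrm D\mathcal J(v)[Z\partial_xZ]$ is at most $C(Z)V(v)$, and the damping term is at most $C\lambda V(v)$. So the pathwise Gronwall argument closes and the fallback is not needed.

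With this filled in, your proof is a genuinely different, valid alternative to the paper's. The paper subtracts $\eta_K=\int_0^tP_K\Phi\dd W$ rather than a stochastic convolution. It constructs a maximal solution by Galerkin approximation and proves that the maximal time $\zeta_K$ is a stopping time. It then excludes blow-up by a probabilistic argument: It\^o's formula for $G(u_K)$ via Lemma~\ref{lem:generator}, localized by $\tau_R$, $\sigma_M$, $\rho_L$ and combined with a logarithmic $H^6$ estimate and Doob's inequality. This yields $\PP(\zeta_K\le T)\le R^{-1}\E G(u_K(T\wedge\tau_R))\to0$ as $R\to\infty$.

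That argument never meets noise--solution cross terms, because $\mathcal J$ is evaluated on $u_K$ itself and only the damping and the It\^o correction survive. It also reuses a lemma needed anyway for Lemma~\ref{lem:comparison-moments}. Your argument instead gives non-explosion for every driving path. The solution map $Z_K|_{[0,t]}\mapsto u_K|_{[0,t]}$ is then globally defined and continuous, so adaptedness is immediate and the stopping-time machinery disappears. The price is the cancellation computation above. Your one-line fallback is essentially the paper's argument, but carrying it out requires exactly that localization, since It\^o's formula is available only before a possibly finite explosion time.
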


The proof is given in Appendix~\ref{app:comparison-existence}.

\begin{lemma}[Exponential integrability and Sobolev moment bounds]\label{lem:comparison-moments}
Fix $T>0$ and $0\le\Lambda<\infty$. The comparison solutions $u_K$ of \eqref{eq:smooth} satisfy, for every $A>0$,
\begin{equation}\label{eq:derivative-exp}
 \sup_{K\ge1,\ 0\le\lambda\le\Lambda}
 \E\exp\left(A\int_0^T\norm{\partial_x u_K(t)}_\infty\dd t\right)<\infty.
\end{equation}
For every $1\le p<\infty$, there is a constant $C_p$ such that
\begin{align}
 \sup_{\substack{K\ge1\\0\le\lambda\le\Lambda}}\left\|1+\sup_{0\le t\le T}\norm{u_K(t)}_{H^{9/2}}\right\|_{L^p(\Omega)}&\le C_p,\label{eq:low}\\
 \sup_{0\le\lambda\le\Lambda}\left\|1+\sup_{0\le t\le T}\norm{u_K(t)}_{H^9}\right\|_{L^p(\Omega)}&\le C_pK^{9/2},\qquad K\ge1.\label{eq:high}
\end{align}
For each such $p$ and every $9/2<q<9$, there is a constant $C_{p,q}$ such that, for all $K\ge1$,
\begin{equation}\label{eq:intermediate}
 \sup_{0\le\lambda\le\Lambda}\left\|1+\sup_{0\le t\le T}\norm{u_K(t)}_{H^q}\right\|_{L^p(\Omega)}
 \le C_{p,q}K^{q-9/2}.
\end{equation}
The bound in \eqref{eq:derivative-exp} depends only on $A,T,\Lambda$, $\norm{u_0}_{H^2}$ and $\norm\Phi_{\HS(\U,H^2)}$. The constant $C_p$ depends only on $p,T,\Lambda$, $\norm{u_0}_{H^{9/2}}$ and $\norm\Phi_{\HS(\U,H^{9/2})}$, while $C_{p,q}$ may additionally depend on $q$. Neither depends on $K$ or $\lambda\in[0,\Lambda]$.
\end{lemma}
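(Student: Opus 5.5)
We prove \eqref{eq:derivative-exp} first, by applying It\^o's formula to $G(u_K(t))$. This is legitimate because, by Proposition~\ref{prop:smooth}, $u_K\in C([0,T];H^m_0)$ for every $m\ge3$, so $G(u_K)$ is a smooth functional of a strong solution. Lemma~\ref{lem:generator}, applied with noise $P_K\Phi$ and using $\norm{P_K\Phi}_{\HS(\U,H^2)}\le\norm\Phi_{\HS(\U,H^2)}$, gives
\[
 G(u_K(t))\le G(P_Ku_0)+C_0\norm\Phi_{\HS(\U,H^2)}^2t+C_1\Lambda\int_0^tG(u_K)\dd s+M_t ,
\]
where $M$ is a local martingale with $\dd\langle M\rangle_t\le C_2\norm\Phi^2_{\HS(\U,H^2)}G(u_K)^{5/6}\dd t$. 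To remove the linear damping term we multiply by $e^{-C_1\Lambda t}$; equivalently, we apply It\^o's formula to $e^{-C_1\Lambda t}G$. Since the quadratic variation has sublinear growth $G^{5/6}\le \varepsilon G+C_\varepsilon$, the standard exponential-martingale argument applies: for $\alpha>0$ small, $\E\exp(\alpha\int_0^T G\,dt)$ is finite once we use $\exp(M_t-\tfrac\gamma2\langle M\rangle_t)$ together with Young's inequality. That argument, however, only gives one fixed $\alpha$ and not all $A$.

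To reach every $A>0$ we exploit the sublinearity $G^{5/6}$ more directly. By Lemma~\ref{lem:derivative-control}, $A\int_0^T\norm{\partial_xu_K}_\infty\dd t\le CA\int_0^T G\dd t$, and we write $\int_0^T G\dd t\le T\sup_t G$. From the bound above,
\[
 A\sup_{t\le T}\Big(G-\theta\int_0^t G^{5/6}\Big)
\]
is controlled by $\sup_t\big(AM_t-\tfrac{A^2}{2}\,\cdot\big)$-type exponential martingales, with $\theta$ chosen proportional to $A C_2\norm\Phi^2$. The leftover $A\theta\int_0^TG^{5/6}$ is absorbed by $\varepsilon A\int_0^T G+C_{\varepsilon,A}$, and then by Gronwall for the exponentiated quantity. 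An alternative that avoids circularity is to use a Lyapunov function $e^{\beta G}$ directly: its generator is bounded by $\beta e^{\beta G}\bigl(C+C_1\Lambda G+\tfrac{\beta}{2}C_2\norm\Phi^2G^{5/6}\bigr)$, which grows like $G\,e^{\beta G}$. This is not closed, so the cleaner route is Gronwall on $e^{-C_1\Lambda t}$-weighted $G$, followed by the Doob/exponential-martingale inequality $\PP(\sup_t M_t-\tfrac\gamma2\langle M\rangle_t\ge R)\le e^{-\gamma R}$ with $\gamma$ arbitrarily large. Tail bounds of order $e^{-\gamma R}$ for every $\gamma$ (since $\langle M\rangle\lesssim\int G^{5/6}$ is subcritical) then give all exponential moments of $\sup_tG$, hence \eqref{eq:derivative-exp}. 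Making this absorption rigorous uniformly in $K$ and $\lambda\le\Lambda$, together with a stopping-time localization, is the main obstacle.

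For \eqref{eq:low} we apply It\^o's formula to $\norm{u_K}_{H^{9/2}}^{2p}$. The Airy term cancels, damping has a good sign, and the transport term is bounded by \eqref{eq:tame} as $C\norm{\partial_xu_K}_\infty\norm{u_K}_{H^{9/2}}^2$. The It\^o correction and the martingale are controlled by $\norm\Phi_{\HS(\U,H^{9/2})}$. A stochastic Gronwall argument then gives $\norm{u_K}_{H^{9/2}}^{2}\le e^{C\int_0^T\norm{\partial_xu_K}_\infty}\times(\text{initial data}+\text{noise})$. Taking BDG and Cauchy--Schwarz, with \eqref{eq:derivative-exp} controlling the exponential factor in any $L^p$, yields \eqref{eq:low}. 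For \eqref{eq:high} the same computation in $H^9$ applies, now with $\norm{P_Ku_0}_{H^9}\le K^{9/2}\norm{u_0}_{H^{9/2}}$ and $\norm{P_K\Phi}_{\HS(\U,H^9)}\le K^{9/2}\norm\Phi_{\HS(\U,H^{9/2})}$, which gives the factor $K^{9/2}$. Finally, \eqref{eq:intermediate} follows by interpolation, $\norm v_{H^q}\le\norm v_{H^{9/2}}^{1-\theta}\norm v_{H^9}^{\theta}$ with $\theta=(q-9/2)/(9/2)$, combined with H\"older in $\Omega$.
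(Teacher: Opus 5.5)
Your argument is correct in substance but takes a different route from the paper in both parts. The step you single out as ``the main obstacle'' for \eqref{eq:derivative-exp} closes in a few lines.

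Set $Y_t=e^{-C_1\Lambda t}G(u_K(t))$ and $\widetilde M_t=\int_0^te^{-C_1\Lambda s}\,\mathrm DG(u_K(s))[P_K\Phi\dd W(s)]$. Since $\lambda\le\Lambda$, Lemma~\ref{lem:generator} gives $Y_t\le Y_0+ct+\widetilde M_t$ and $\dd[\widetilde M]_t\le cY_t^{5/6}\dd t$. Here $c$ depends only on $\norm\Phi_{\HS(\U,H^2)}$, and $Y_0=G(P_Ku_0)$ is bounded in terms of $\norm{u_0}_{H^2}$. For each $\gamma>0$, $\exp(\gamma\widetilde M-\frac{\gamma^2}{2}[\widetilde M])$ is a nonnegative continuous local martingale, hence a supermartingale. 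Therefore $\PP\bigl(\sup_{t\le T}(\widetilde M_t-\frac\gamma2[\widetilde M]_t)\ge R\bigr)\le e^{-\gamma R}$, with no localization needed. Off this event, $Y^*=\sup_{t\le T}Y_t$ (finite a.s.\ by Proposition~\ref{prop:smooth}) satisfies
\[
 Y^*\le Y_0+cT+R+\tfrac{c\gamma T}{2}(Y^*)^{5/6}
 \le Y_0+cT+R+\tfrac12Y^*+C(\gamma T)^6 .
\]
Hence $\PP(Y^*>C_\gamma+2R)\le e^{-\gamma R}$ with $C_\gamma=2Y_0+2cT+2C(\gamma T)^6$, for every $\gamma$. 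This gives $\E e^{\beta Y^*}<\infty$ for every $\beta>0$, uniformly in $K$ and $\lambda\in[0,\Lambda]$. Then \eqref{eq:derivative-exp} follows from Lemma~\ref{lem:derivative-control} and $\int_0^TG(u_K)\dd t\le Te^{C_1\Lambda T}Y^*$.

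Sublinearity of the quadratic variation is exactly what lets you absorb $\frac\gamma2[\widetilde M]$ for arbitrary $\gamma$. So your worry that the exponential-martingale argument yields only one small exponent concerns linear quadratic variation, not this case, and the intermediate attempts in your second paragraph can be dropped. Your rejection of the Lyapunov route is also mistaken. The paper invokes the Cox--Hutzenthaler--Jentzen argument, whose Lyapunov function carries a time weight $e^{-\alpha t}$. The extra term $-\alpha\beta e^{-\alpha t}G$ then dominates $C_1\Lambda G$, the sublinear term $\tfrac\beta2C_2\norm\Phi_{\HS(\U,H^2)}^2G^{5/6}$, and the $AG$ one wants to integrate, once $\alpha$ is large; so that route does close. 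The citation gives $\E\exp(A\int_0^TG)$ at once. Your argument is self-contained and yields the stronger bound on $\E\exp(A\sup_{t\le T}G(u_K(t)))$.

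For \eqref{eq:low}--\eqref{eq:high}, the paper applies It\^o's formula to $\log\mathcal E_{q,K}$, where $\mathcal E_{q,K}=1+K^{-2(q-9/2)}\norm{u_K}_{H^q}^2$. The logarithm produces a martingale with deterministically bounded quadratic variation, hence Gaussian tails. Exponentiating and one H\"older step with \eqref{eq:derivative-exp} then bound all moments without absorption, and the $K^{q-9/2}$ scaling is built into the normalization.

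Your pathwise Gronwall--BDG--H\"older scheme, which the paper itself uses for \eqref{eq:approx}, also works. Take $X=\norm{u_K}_{H^q}^2$ and $b=\norm{P_K\Phi}_{\HS(\U,H^q)}^2$, so that $[M]_T\le4bTX^*$. One gets $\E(X^*)^p\le C(X_0+bT)^p+C(bT)^{p/2}(\E(X^*)^p)^{1/2}$, hence $(\E(X^*)^p)^{1/p}\le C(X_0+bT)$, which gives the factor $K^{9/2}$ for $q=9$. Two details need fixing:
\begin{itemize}
\item The Young absorption needs $\E(X^*)^p<\infty$. Stop $u_K$ when its $H^q$ norm leaves a ball, and remove the stopping by Fatou.
\item Run Gronwall on the square rather than on the $2p$th power. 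With the $2p$th power, a Cauchy--Schwarz split produces the higher moment $\E(X^*)^{2p-1}$.
\end{itemize}
Your interpolation step for \eqref{eq:intermediate} is the paper's.
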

\begin{proof}
By Proposition~\ref{prop:smooth}, $u_K$ has continuous $H^5$-valued paths. Applying It\^o's formula and Lemma~\ref{lem:generator}, with $\norm{P_K\Phi}_{\HS(\U,H^2)}\le\norm\Phi_{\HS(\U,H^2)}$, shows that $X_t=G(u_K(t))\ge1$ satisfies
\[
 \dd X_t\le C(1+X_t)\dd t+\dd M_t,
 \qquad
 \dd[M]_t\le CX_t^{5/6}\dd t,
\]
where $M$ is a continuous local martingale and $C$ depends only on $\Lambda$ and $\norm\Phi_{\HS(\U,H^2)}$, independently of $K$ and $\lambda\in[0,\Lambda]$.

The exponential Lyapunov argument of Cox, Hutzenthaler, and Jentzen~\cite{CoxHutzenthalerJentzen2024}, based on It\^o's formula and localization, applies to the present real-valued It\^o process and gives, for every $A>0$,
\[
 \E\exp\left(A\int_0^T G(u_K(t))\dd t\right)
 \le C\exp\!\left(CG(P_Ku_0)\right)\le C.
\]
The last inequality follows from the upper bound in \eqref{eq:coercive}, the contraction property of $P_K$ on $H^2$, and the embedding $H^2\hookrightarrow L^4$. The constants are uniform in $K$ and $\lambda\in[0,\Lambda]$. Since $\norm{\partial_x v}_\infty\le CG(v)$ by Lemma~\ref{lem:derivative-control}, replacing $A$ by $CA$ proves \eqref{eq:derivative-exp}.

To estimate the Sobolev moments, fix $q\in\{9/2,9\}$ and, for $L>\norm{P_Ku_0}_{H^{q+3}}$, define
\[
 \tau_L^{(q+3)}=\inf\{t\ge0:\norm{u_K(t)}_{H^{q+3}}\ge L\}.
\]
On $[0,\tau_L^{(q+3)}]$, $u_K$ is bounded in $H^{q+3}$, so the drift is $H^q$-valued. It\^o's formula and periodic integration by parts give
\begin{equation}\label{eq:sobolev-ito}
 \begin{aligned}
 \dd\norm{u_K}_{H^q}^2
 ={}&-2\ip{\langle D\rangle^q(u_K\partial_x u_K)}{\langle D\rangle^qu_K}\dd t
       -2\lambda\norm{u_K}_{H^q}^2\dd t\\
 &+\norm{P_K\Phi}_{\HS(\U,H^q)}^2\dd t
       +2\ip{\langle D\rangle^qu_K}{\langle D\rangle^qP_K\Phi\dd W}.
 \end{aligned}
\end{equation}
Set $\mathcal E_{q,K}(t)=1+K^{-2(q-9/2)}\norm{u_K(t)}_{H^q}^2$.
Since $(1+k^2)^{q-9/2}\le C_qK^{2(q-9/2)}$ for $0<|k|\le K$, the Fourier definitions of the Sobolev and Hilbert--Schmidt norms give
\[
 K^{-2(q-9/2)}\norm{P_K\Phi}_{\HS(\U,H^q)}^2
 =K^{-2(q-9/2)}\sum_j\norm{P_K\Phi e_j}_{H^q}^2
 \le C_q\norm\Phi_{\HS(\U,H^{9/2})}^2.
\]
Similarly, $\mathcal E_{q,K}(0)\le C_q(1+\norm{u_0}_{H^{9/2}}^2)$.
Multiplying \eqref{eq:sobolev-ito} by $K^{-2(q-9/2)}$, using \eqref{eq:tame}, and discarding the nonpositive damping term gives, for $t<\tau_L^{(q+3)}$,
\begin{equation*}
 \begin{split}
 \dd\mathcal E_{q,K}
 \le{}&C_q\norm{\partial_x u_K}_\infty\mathcal E_{q,K}\dd t
       +K^{-2(q-9/2)}\norm{P_K\Phi}_{\HS(\U,H^q)}^2\dd t\\
      &+2K^{-2(q-9/2)}\ip{\langle D\rangle^qu_K}{\langle D\rangle^qP_K\Phi\dd W}.
 \end{split}
\end{equation*}
Since $\mathcal E_{q,K}\ge1$, It\^o's formula for $\log\mathcal E_{q,K}$, after discarding the nonpositive It\^o correction, gives
\begin{equation}\label{eq:log}
 \dd\log\mathcal E_{q,K}\le C_q\norm{\partial_x u_K}_\infty\dd t
 +C\dd t+\dd\mathcal N_{q,K},
 \qquad t<\tau_L^{(q+3)},
\end{equation}
where
\[
 \mathcal N_{q,K}(t)=2K^{-2(q-9/2)}\int_0^{t\wedge\tau_L^{(q+3)}}
       \frac{\ip{\langle D\rangle^qu_K(s)}{\langle D\rangle^qP_K\Phi\dd W(s)}}{\mathcal E_{q,K}(s)}.
\]
Set $\mathcal N^*_{q,K}=\sup_{0\le t\le T}|\mathcal N_{q,K}(t)|$.
Integrating \eqref{eq:log} up to $t\wedge\tau_L^{(q+3)}$ gives
\begin{equation}\label{eq:stopped-sobolev-bound}
 \sup_{0\le t\le T}\mathcal E_{q,K}(t\wedge\tau_L^{(q+3)})
 \le\mathcal E_{q,K}(0)\exp\left(C_q\int_0^T\norm{\partial_x u_K}_\infty\dd t
                  +CT+\mathcal N^*_{q,K}\right).
\end{equation}
The integral term has finite exponential moments by \eqref{eq:derivative-exp}. To control $\mathcal N^*_{q,K}$, we use $K^{-2(q-9/2)}\norm{u_K}_{H^q}^2=\mathcal E_{q,K}-1$ and the Cauchy--Schwarz inequality to obtain
\[
 {}[\mathcal N_{q,K}]_T
 \le4K^{-2(q-9/2)}\norm{P_K\Phi}_{\HS(\U,H^q)}^2
       \int_0^{T\wedge\tau_L^{(q+3)}}\frac{\mathcal E_{q,K}(t)-1}{\mathcal E_{q,K}(t)^2}\dd t
 \le CT.
\]
Thus $\mathcal N_{q,K}$ is a square-integrable continuous martingale. Applying the exponential martingale inequality to $\pm\mathcal N_{q,K}$ gives
\[
 \PP(\mathcal N^*_{q,K}>z)\le2\exp\left(-\frac{z^2}{2CT}\right),\qquad z>0.
\]
For every $\alpha>0$, Tonelli's theorem therefore yields
\begin{equation}\label{eq:martingale-sup-exp}
 \begin{aligned}
 \E e^{\alpha\mathcal N^*_{q,K}}
 &=1+\alpha\int_0^\infty e^{\alpha z}\PP(\mathcal N^*_{q,K}>z)\dd z\\
 &\le1+2\alpha\int_0^\infty
       \exp\left(\alpha z-\frac{z^2}{2CT}\right)\dd z
 \le C_\alpha.
 \end{aligned}
\end{equation}

By \eqref{eq:stopped-sobolev-bound} and H\"older's inequality, for every $1\le p<\infty$,
\[
 \begin{aligned}
 &\E\left(\sup_{0\le t\le T}\mathcal E_{q,K}(t\wedge\tau_L^{(q+3)})\right)^{p/2}\\
 &\quad\le\mathcal E_{q,K}(0)^{p/2}e^{CpT/2}
 \E\exp\left(\frac{pC_q}{2}\int_0^T\norm{\partial_x u_K}_\infty\dd t
                  +\frac p2\mathcal N^*_{q,K}\right)\\
 &\quad\le\mathcal E_{q,K}(0)^{p/2}e^{CpT/2}
 \left(\E e^{pC_q\int_0^T\norm{\partial_x u_K}_\infty\dd t}\right)^{1/2}
 \left(\E e^{p\mathcal N^*_{q,K}}\right)^{1/2}
 \le C_p.
 \end{aligned}
\]
The last inequality uses the uniform bound on $\mathcal E_{q,K}(0)$,
\eqref{eq:derivative-exp} and \eqref{eq:martingale-sup-exp} with $\alpha=p$.
The constants $C$, $C_\alpha$ and $C_p$ are independent of $L$, $K$ and $\lambda\in[0,\Lambda]$.

By Proposition~\ref{prop:smooth}, $\sup_{0\le t\le T}\norm{u_K(t)}_{H^{q+3}}<\infty$ almost surely, so $\tau_L^{(q+3)}>T$ for all sufficiently large $L$. Fatou's lemma gives
\[
 \E\left(\sup_{0\le t\le T}\mathcal E_{q,K}(t)\right)^{p/2}
 \le\liminf_{L\to\infty}
       \E\left(\sup_{0\le t\le T}\mathcal E_{q,K}(t\wedge\tau_L^{(q+3)})\right)^{p/2}
 \le C_p.
\]
Therefore
\[
 \left\|1+\sup_{0\le t\le T}\norm{u_K(t)}_{H^q}\right\|_{L^p(\Omega)}
 \le1+K^{q-9/2}
       \left\|\left(\sup_{0\le t\le T}\mathcal E_{q,K}(t)\right)^{1/2}\right\|_{L^p(\Omega)}
 \le C_pK^{q-9/2},
\]
which proves \eqref{eq:low} and \eqref{eq:high}.

For $9/2<q<9$, set $\vartheta=(2q-9)/9$. The Sobolev interpolation inequality, H\"older's inequality and \eqref{eq:low}--\eqref{eq:high} give
\begin{align*}
 \left\|\sup_{0\le t\le T}\norm{u_K(t)}_{H^q}\right\|_{L^p(\Omega)}
 &\le
 \left\|\sup_{0\le t\le T}\norm{u_K(t)}_{H^{9/2}}\right\|_{L^p(\Omega)}^{1-\vartheta}
 \left\|\sup_{0\le t\le T}\norm{u_K(t)}_{H^9}\right\|_{L^p(\Omega)}^{\vartheta}\\*
 &\le C_{p,q}K^{(9/2)\vartheta}=C_{p,q}K^{q-9/2}.
\end{align*}
Since $K\ge1$, this also gives \eqref{eq:intermediate}.
\end{proof}

\subsection{Properties of the exact solution}

\begin{theorem}[Exponential integrability and comparison estimate]\label{thm:exact}
Fix $T>0$ and $0\le\Lambda<\infty$. Let $u_0\in H_0^{9/2}$ be deterministic and $\Phi\in\HS(\U,H_0^{9/2})$. For each $\lambda\in[0,\Lambda]$, equation \eqref{eq:spde} admits an adapted solution $u$ satisfying
\[
 u\in C([0,T];H_0^s)\qquad\text{for every }0\le s<9/2,\quad\text{a.s.}
\]
Pathwise uniqueness holds in $C([0,T];H_0^s)$ for every $3<s<9/2$.

Moreover, for every $A>0$,
\begin{equation}\label{eq:original-derivative-exp}
 \sup_{0\le\lambda\le\Lambda}
 \E\exp\left(A\int_0^T\norm{\partial_x u(t)}_\infty\dd t\right)<\infty.
\end{equation}
For every $1\le p<\infty$,
\begin{equation}\label{eq:original-sobolev-moment}
 \sup_{0\le\lambda\le\Lambda}
 \E\sup_{0\le t\le T}\norm{u(t)}_{H^{9/2}}^p<\infty.
\end{equation}
For every $r\ge1$, integer $K\ge1$ and $\lambda\in[0,\Lambda]$, the comparison solution $u_K$ of \eqref{eq:smooth}, with the same damping coefficient as $u$, satisfies
\begin{equation}\label{eq:approx}
 \left\|\sup_{0\le t\le T}\norm{u(t)-u_K(t)}_2\right\|_{L^{2r}(\Omega)}
 \le C_rK^{-9/2}.
\end{equation}
The bounds in \eqref{eq:original-derivative-exp} and \eqref{eq:original-sobolev-moment} and the constant $C_r$ in \eqref{eq:approx} depend only on $T,\Lambda$, $\norm{u_0}_{H^{9/2}}$, $\norm\Phi_{\HS(\U,H^{9/2})}$, and the respective parameters $A$, $p$ and $r$.
\end{theorem}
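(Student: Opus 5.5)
The plan is to obtain $u$ as the limit of the comparison solutions $u_K$ as $K\to\infty$, by showing that $(u_K)$ is Cauchy in $L^{2r}(\Omega;C([0,T];L^2))$ at the rate $K^{-9/2}$. Fix $K<K'$ and set $w=u_{K'}-u_K$. The noise contributes the additive term $(P_{K'}-P_K)\Phi\dd W$, so $w$ satisfies
\[
 \dd w+\bigl(\partial_x^3w+\lambda w+\tfrac12\partial_x((u_{K'}+u_K)w)\bigr)\dd t=(P_{K'}-P_K)\Phi\dd W,
\]
with initial value $w(0)=(P_{K'}-P_K)u_0$.

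\textbf{Step 1 (energy identity for $w$).} Apply It\^o's formula to $\norm w_2^2$. The Airy term and the damping term are nonpositive after integration by parts. The nonlinear term becomes
\[
 -\int_\T\partial_x(\bar u w)\,w=-\tfrac12\int_\T(\partial_x\bar u)\,w^2, \qquad \bar u=\tfrac12(u_K+u_{K'}),
\]
and is bounded by $\tfrac12(\norm{\partial_x u_K}_\infty+\norm{\partial_x u_{K'}}_\infty)\norm w_2^2$. The noise enters through its It\^o correction and the martingale $2\ip{w}{(P_{K'}-P_K)\Phi\dd W}$. The relevant sizes are
\[
 \norm{(P_{K'}-P_K)\Phi}_{\HS(\U,L^2)}^2\le K^{-9}\norm\Phi_{\HS(\U,H^{9/2})}^2, \qquad \norm{w(0)}_2^2\le K^{-9}\norm{u_0}_{H^{9/2}}^2.
\]

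\textbf{Step 2 (integrating factor).} Put $\Gamma(t)=\int_0^t(\norm{\partial_x u_K}_\infty+\norm{\partial_x u_{K'}}_\infty)\dd s$ and $Y=e^{-\Gamma}\norm w_2^2$. Then $Y(t)\le CK^{-9}(1+t)+\mathcal M(t)$, where $\mathcal M$ is a martingale with $\dd[\mathcal M]\le 4e^{-2\Gamma}\norm w_2^2K^{-9}\norm\Phi^2\dd t\le CK^{-9}Y\dd t$. BDG and Young give
\[
 \bigl\|\sup_t Y\bigr\|_{L^r(\Omega)}\le CK^{-9},
\]
after localization, since $Y$ has finite moments because $u_K$ is smooth. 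Then
\[
 \sup_t\norm w_2^2\le e^{\Gamma(T)}\sup_tY,
\]
and H\"older with \eqref{eq:derivative-exp} from Lemma~\ref{lem:comparison-moments}, which is uniform in $K$ and $\lambda$, yields $\|\sup_t\norm w_2\|_{L^{2r}}\le C_rK^{-9/2}$. This uniform exponential integrability is what makes the Gronwall factor harmless. I expect this to be the main obstacle: the loss in H\"older must be absorbed by applying the BDG step at a higher moment $r'>r$, which is fine because all constants are uniform.

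\textbf{Step 3 (limit and properties).} Let $u=\lim u_K$ in $L^{2r}(\Omega;C([0,T];L^2))$; it is adapted, and letting $K'\to\infty$ gives \eqref{eq:approx}. By \eqref{eq:low} and weak-$*$ lower semicontinuity, $\sup_t\norm{u(t)}_{H^{9/2}}$ has the moments in \eqref{eq:original-sobolev-moment}. Interpolating between $L^2$ convergence and the uniform $H^{9/2}$ bound gives $u_K\to u$ in $C([0,T];H^s)$ for every $s<9/2$, along a subsequence almost surely. Taking $s>3$, the nonlinear and Airy terms converge in $C([0,T];L^2)$, so $u$ solves \eqref{eq:spde} in integrated form.

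\textbf{Step 4 (exponential integrability and uniqueness).} Since $\int_0^T\norm{\partial_xu_K}_\infty\dd t\to\int_0^T\norm{\partial_x u}_\infty\dd t$ almost surely (using $H^s\hookrightarrow W^{1,\infty}$), Fatou's lemma gives \eqref{eq:original-derivative-exp}. For pathwise uniqueness, take two solutions in $C([0,T];H^s)$ with $s>3$ and consider their difference. Its noise cancels, so the Step 1 computation applies pathwise without martingale terms. Gronwall with the pathwise finite $\int\norm{\partial_x u}_\infty$ gives zero difference; the regularity $s>3$ justifies the $L^2$ energy identity.
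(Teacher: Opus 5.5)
Your proposal is correct and follows the paper's proof: an It\^o $L^2$ estimate for $u_{K'}-u_K$ whose Gronwall factor is controlled by the uniform exponential integrability \eqref{eq:derivative-exp}, BDG plus Young to obtain the $K^{-9/2}$ Cauchy rate, then passage to the limit via interpolation with \eqref{eq:low}, Fatou arguments for \eqref{eq:original-derivative-exp}, \eqref{eq:original-sobolev-moment} and \eqref{eq:approx}, and a pathwise $L^2$ Gronwall argument for uniqueness. The differences are only technical. You put the weight $e^{-\Gamma}$ inside It\^o's formula, so the BDG/Young step closes without exponential moments and \eqref{eq:derivative-exp} enters only through one final H\"older step; the paper instead applies Gronwall pathwise and then H\"older in $L^{4r}\times L^{4r/3}$. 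You keep both $u_K$ and $u_{K'}$ in the Gronwall factor, whereas the paper's splitting $\partial_x(u_Kz)+z\partial_x z$ needs only $u_K$. Finally, you use lower semicontinuity of the $H^{9/2}$ norm directly, rather than truncation by $P_\ell$ followed by monotone convergence.
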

\begin{proof}
\medskip\noindent\emph{Step 1: comparison of $u_M$ and $u_K$.}
Fix $\lambda\in[0,\Lambda]$. For $M\ge K$, let $z=u_M-u_K$. Subtracting \eqref{eq:smooth} for $u_M$ and $u_K$ gives
\[
 \dd z+\bigl(\partial_x^3z+\partial_x(u_Kz)+z\partial_x z+\lambda z\bigr)\dd t
 =(P_M-P_K)\Phi\dd W.
\]
Periodic integration by parts gives
\[
 2\ip{\partial_x(u_Kz)}z=\int_\T (\partial_x u_K)z^2,\qquad \ip{z\partial_x z}z=0.
\]
It\^o's formula and the skew-adjointness of $\partial_x^3$ therefore yield
\begin{align}\label{eq:exact-difference}
 \dd\norm z_2^2+2\lambda\norm z_2^2\dd t
 ={}&-\int_\T (\partial_x u_K)z^2\dd t
  +\norm{(P_M-P_K)\Phi}_{\HS(\U,H)}^2\dd t\notag\\
 &+2\ip z{(P_M-P_K)\Phi\dd W}.
\end{align}
Since $(1+k^2)^{-9/2}\le K^{-9}$ for $|k|>K$, we have
\[
 \norm{(P_M-P_K)u_0}_2
 +\norm{(P_M-P_K)\Phi}_{\HS(\U,H)}\le CK^{-9/2}.
\]
Set
\[
 \mathcal E(t)=\sup_{0\le\tau\le t}\norm{z(\tau)}_2^2,\qquad
 \mathcal M(t)=2\int_0^t\ip{z(\tau)}{(P_M-P_K)\Phi\dd W(\tau)},
\]
and $\mathcal M_T^*=\sup_{0\le t\le T}|\mathcal M(t)|$. Integrating \eqref{eq:exact-difference}, discarding the nonnegative damping term, and taking the supremum over $[0,t]$ gives
\[
 \mathcal E(t)\le CK^{-9}+\mathcal M_T^*
        +\int_0^t\norm{\partial_x u_K(\tau)}_\infty\mathcal E(\tau)\dd\tau,
 \qquad 0\le t\le T.
\]
Gronwall's inequality implies
\begin{equation}\label{eq:exact-gronwall}
 \mathcal E(T)\le e^{\int_0^T\norm{\partial_x u_K}_\infty\dd t}
       (CK^{-9}+\mathcal M_T^*).
\end{equation}
The quadratic variation satisfies
\[
 \begin{aligned}
 {}[\mathcal M]_T
 &=4\int_0^T\norm{\big((P_M-P_K)\Phi\big)^*z(\tau)}_\U^2\dd\tau\\
 &\le4T\norm{(P_M-P_K)\Phi}_{\HS(\U,H)}^2\mathcal E(T)
 \le C_TK^{-9}\mathcal E(T).
 \end{aligned}
\]
The bound
\[
 \mathcal E(T)\le2\sup_{0\le t\le T}\norm{u_M(t)}_2^2
                  +2\sup_{0\le t\le T}\norm{u_K(t)}_2^2
\]
and \eqref{eq:low} give $\mathcal E(T)\in L^p(\Omega)$ for every $1\le p<\infty$. For $r\ge1$, the BDG inequality gives
\begin{equation}\label{eq:exact-bdg}
 \begin{aligned}
 &\norm{\mathcal M_T^*}_{L^{4r/3}(\Omega)}
 \le C\norm{[\mathcal M]_T^{1/2}}_{L^{4r/3}(\Omega)}
 \le CK^{-9/2}\norm{\mathcal E(T)^{1/2}}_{L^{4r/3}(\Omega)}\\
 &\le CK^{-9/2}\norm{\mathcal E(T)^{1/2}}_{L^{2r}(\Omega)}
 =CK^{-9/2}\norm{\mathcal E(T)}_{L^r(\Omega)}^{1/2},
 \end{aligned}
\end{equation}
where we used $4r/3\le2r$. Applying H\"older's inequality to \eqref{eq:exact-gronwall}, with \eqref{eq:derivative-exp} and \eqref{eq:exact-bdg}, yields
\[
 \begin{aligned}
 \norm{\mathcal E(T)}_{L^r(\Omega)}
 &\le CK^{-9}\Bigl\|e^{\int_0^T\norm{\partial_x u_K}_\infty\dd t}\Bigr\|_{L^r(\Omega)}
 +\Bigl\|e^{\int_0^T\norm{\partial_x u_K}_\infty\dd t}\Bigr\|_{L^{4r}(\Omega)}
       \norm{\mathcal M_T^*}_{L^{4r/3}(\Omega)}\\
 &\le CK^{-9}+CK^{-9/2}\norm{\mathcal E(T)}_{L^r(\Omega)}^{1/2}.
 \end{aligned}
\]
Since $\norm{\mathcal E(T)}_{L^r(\Omega)}<\infty$, Young's inequality gives
\[
 \left\|\sup_{0\le t\le T}\norm{u_M(t)-u_K(t)}_2\right\|_{L^{2r}(\Omega)}
 =\norm{\mathcal E(T)}_{L^r(\Omega)}^{1/2}\le CK^{-9/2}.
\]

\medskip\noindent\emph{Step 2: existence and pathwise uniqueness.}
For $0\le s<9/2$, the Sobolev interpolation inequality gives $\norm z_{H^s}\le\norm z_2^{1-2s/9}\norm z_{H^{9/2}}^{2s/9}$. Step~1, H\"older's inequality and \eqref{eq:low} yield
\[
 \left\|\sup_{0\le t\le T}\norm{u_M(t)-u_K(t)}_{H^s}\right\|_{L^p(\Omega)}
 \le C_{p,s}K^{s-9/2},\qquad 1\le p<\infty.
\]
These estimates give convergence in $L^p(\Omega;C([0,T];H^s))$ for every $1\le p<\infty$ and $0\le s<9/2$. The limits agree in $C([0,T];L^2)$ and define an adapted mean-zero process $u$. A diagonal argument over exponents increasing to $9/2$ yields a subsequence $(u_{M_j})$ converging almost surely to $u$ in $C([0,T];H^s)$ for every $0\le s<9/2$.

Fix $3<s<9/2$. For $v,w\in H^s$, using $v\partial_x v-w\partial_x w=\tfrac12\partial_x((v+w)(v-w))$, \eqref{eq:product}, and the boundedness of $\partial_x^3:H^s\to H^{s-3}$, we obtain
\begin{equation}\label{eq:drift-lipschitz}
 \begin{aligned}
 &\norm{\partial_x^3(v-w)+v\partial_x v-w\partial_x w+\lambda(v-w)}_{H^{s-3}}\\
 &\qquad\le C_{\Lambda,s}(1+\norm v_{H^s}+\norm w_{H^s})\norm{v-w}_{H^s}.
 \end{aligned}
\end{equation}
By \eqref{eq:drift-lipschitz}, H\"older's inequality, and convergence in $L^{2p}(\Omega;C([0,T];H^s))$, the drift integrals converge in $L^p(\Omega;C([0,T];H^{s-3}))$.
Since $P_K\to I$ strongly on $H^{9/2}_0$ and $\norm{P_K}_{H^{9/2}\to H^{9/2}}\le1$, dominated convergence in the Hilbert--Schmidt sum gives $\norm{(P_K-I)\Phi}_{\HS(\U,H^{9/2})}\to0$. The BDG inequality then yields
\[
 \left\|\sup_{0\le t\le T}
 \left\|\int_0^t(P_K-I)\Phi\dd W\right\|_{H^{9/2}}\right\|_{L^p(\Omega)}
 \le C_pT^{1/2}\norm{(P_K-I)\Phi}_{\HS(\U,H^{9/2})}
 \longrightarrow0.
\]
Since $P_Ku_0\to u_0$ in $H^{9/2}$, passing to the limit in the integral form of \eqref{eq:smooth} gives the following identity in $H^{s-3}$, almost surely for all $t\in[0,T]$:
\[
 u(t)=u_0-\int_0^t\bigl[\partial_x^3u(\tau)+u(\tau)\partial_x u(\tau)+\lambda u(\tau)\bigr]\dd\tau
       +\int_0^t\Phi\dd W(\tau).
\]
For pathwise uniqueness, let $v,w\in C([0,T];H_0^s)$ be two adapted solutions with the same initial data and Wiener process. Subtracting their integral equations and repeating the $L^2$ calculation in \eqref{eq:exact-difference} gives
\[
 \norm{v(t)-w(t)}_2^2
 \le\int_0^t\norm{\partial_x w(\tau)}_\infty\norm{v(\tau)-w(\tau)}_2^2\dd\tau.
\]
Since $\partial_x w\in L^1(0,T;L^\infty)$ a.s., Gronwall's inequality gives, almost surely, $v(t)=w(t)$ for all $t\in[0,T]$.

\medskip\noindent\emph{Step 3: exponential integrability, Sobolev moment bounds, and comparison estimate.}
For the subsequence from Step~2 and $3/2<s<9/2$, the embedding $H^s\hookrightarrow W^{1,\infty}$ gives
\[
 \left|\int_0^T\norm{\partial_x u_{M_j}}_\infty\dd t-
             \int_0^T\norm{\partial_x u}_\infty\dd t\right|
 \le C_sT\sup_{0\le t\le T}\norm{u_{M_j}(t)-u(t)}_{H^s}
 \longrightarrow0\qquad\text{a.s.}
\]
Applying Fatou's lemma for each $\lambda\in[0,\Lambda]$ and using \eqref{eq:derivative-exp}, we obtain
\[
 \sup_{0\le\lambda\le\Lambda}
 \E\exp\left(A\int_0^T\norm{\partial_x u}_\infty\dd t\right)
 \le\sup_{\substack{K\ge1\\0\le\lambda\le\Lambda}}
      \E\exp\left(A\int_0^T\norm{\partial_x u_K}_\infty\dd t\right)<\infty,
\]
which proves \eqref{eq:original-derivative-exp}.

To estimate the $H^{9/2}$ moments, fix an integer $\ell\ge1$. Boundedness of $P_\ell:L^2\to H^{9/2}$ gives $P_\ell u_{M_j}\to P_\ell u$ almost surely in $C([0,T];H^{9/2})$. Fatou's lemma and \eqref{eq:low} yield, for $1\le p<\infty$,
\[
 \E\sup_{0\le t\le T}\norm{P_\ell u(t)}_{H^{9/2}}^p
 \le\liminf_{j\to\infty}\E\sup_{0\le t\le T}\norm{u_{M_j}(t)}_{H^{9/2}}^p\le C_p.
\]
The partial Fourier norms increase with $\ell$, and their suprema in time increase to $\sup_{0\le t\le T}\norm{u(t)}_{H^{9/2}}$. Monotone convergence therefore gives
\[
 \E\sup_{0\le t\le T}\norm{u(t)}_{H^{9/2}}^p
 =\lim_{\ell\to\infty}\E\sup_{0\le t\le T}\norm{P_\ell u(t)}_{H^{9/2}}^p
 \le C_p,
\]
uniformly for $\lambda\in[0,\Lambda]$, proving \eqref{eq:original-sobolev-moment}.

For each fixed $K$, $M_j\ge K$ for all sufficiently large $j$. Fatou's lemma and Step~1 give
\[
 \E\sup_{0\le t\le T}\norm{u(t)-u_K(t)}_2^{2r}
 \le\liminf_{j\to\infty}
      \E\sup_{0\le t\le T}\norm{u_{M_j}(t)-u_K(t)}_2^{2r}
 \le C_rK^{-9r}.
\]
Taking the $2r$-th root proves \eqref{eq:approx}.
\end{proof}

\section{Strong error estimates}\label{sec:strong}

Under the assumptions of Theorem~\ref{thm:exact}, fix an integer $K\ge1$ and let $(U^n,m^n)$ be any solution of \eqref{eq:scheme}. Let $L=\partial_x^3+\lambda I$ and write $F(v)=Lv+v\partial_x v$. Recall that $Q=(I+\tfrac h2L)^{-1}$. Applying $Q$ to \eqref{eq:scheme}, with $2m^n=U^{n+1}+U^n$ and $\tfrac h2QL=I-Q$, gives
\[
 U^{n+1}=(2Q-I)U^n-hQ\bigl(m^n\partial_x m^n\bigr)+Q\Phi\Delta W^n.
\]
Set $R=2Q-I$. For every $h>0$, $\lambda\ge0$ and $q\in\mathbb R$,
\begin{equation}\label{eq:resolvents}
 \begin{gathered}
 \norm{Rv}_2^2+2\lambda h\norm{Qv}_2^2=\norm v_2^2,
 \qquad v\in L^2,\\
 \norm R_{H^q\to H^q}\le1,\qquad
 \norm{R^*}_{H^q\to H^q}\le1.
 \end{gathered}
\end{equation}
Indeed, $Qv\in H^3$, $Rv-v=-hLQv$ and $Rv+v=2Qv$. Periodic integration by parts gives
\[
 \norm{Rv}_2^2-\norm v_2^2
 =\ip{Rv-v}{Rv+v}
 =-2h\ip{LQv}{Qv}
 =-2\lambda h\norm{Qv}_2^2.
\]
The multiplier bounds give the contractions in \eqref{eq:resolvents}. The energy identity is the discrete analogue of $\frac{\mathrm d}{\mathrm d t}\norm{v(t)}_2^2=-2\lambda\norm{v(t)}_2^2$ for $\partial_t v+Lv=0$, with midpoint value $Qv=(v+Rv)/2$.

For $t_n\le T$, set $e^n=u_K(t_n)-U^n$. For $t_{n+1}\le T$, define
\begin{equation}\label{eq:local-defs}
 \begin{gathered}
 a^n=\tfrac12\bigl(u_K(t_n)+u_K(t_{n+1})\bigr),\qquad
 \kappa^n=ha^n-\int_{t_n}^{t_{n+1}}u_K(t)\dd t,\\
 \bar e^n=\tfrac12(e^{n+1}+e^n)=a^n-m^n.
 \end{gathered}
\end{equation}

\begin{proposition}[Local error estimates and an $L^2$ error propagation identity]\label{prop:one-step}
For $0<h\le1$, $\lambda\in[0,\Lambda]$ and $t_{n+1}\le T$, there exist $\F_{t_{n+1}}$-measurable random variables $M^n\in H^2$ and $D^n\in L^2$ such that $\E(M^n\mid\F_{t_n})=0$ and
\begin{equation}\label{eq:error-equation}
 e^{n+1}-e^n+\frac h2L(e^{n+1}+e^n)
 +h\bigl(a^n\partial_x a^n-m^n\partial_x m^n\bigr)=M^n+D^n.
\end{equation}
For every $p\ge2$, the following local error estimates hold:
\begin{align}
 \norm{M^n}_{L^p(\Omega;L^2)}
 &\le C_p(h^{3/2}+h^{5/2}K^{3/2}+h^{1/2}K^{-9/2}),\label{eq:localM0}\\
 \norm{M^n}_{L^p(\Omega;H^2)}
 &\le C_p(h^{3/2}K^{1/2}+h^{5/2}K^{7/2}+h^{1/2}K^{-5/2}),\label{eq:localM2}\\
 \norm{D^n}_{L^p(\Omega;L^2)}
 &\le C_p(h^2+h^3K^{9/2}),\label{eq:localD}\\
 \norm{\kappa^n}_{L^p(\Omega;H^2)}
 &\le C_p(h^{3/2}+h^2K^{1/2}).\label{eq:localq}
\end{align}
With $w^n=R^*QM^n$, the errors satisfy the following identity:
\begin{align}\label{eq:error-identity}
 &\norm{e^{n+1}}_2^2-\norm{e^n}_2^2
   +h\int_\T(\partial_x a^n)(\bar e^n)^2
   +2\lambda h\norm{\bar e^n-\tfrac12QM^n}_2^2\notag\\
 ={}&2\ip{D^n}{\bar e^n}+\norm{QM^n}_2^2
        +2\ip{e^n}{w^n}
        -h\ip{a^n\partial_x a^n-m^n\partial_x m^n}{w^n}
        +\ip{D^n}{w^n}.
\end{align}
The constants $C_p$ depend only on $p,T,\Lambda$, $\norm{u_0}_{H^{9/2}}$ and $\norm\Phi_{\HS(\U,H^{9/2})}$.
\end{proposition}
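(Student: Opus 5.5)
The plan is to take $M^n$ and $D^n$ from the integrated equation for $u_K$, and to get the identity by pairing \eqref{eq:error-equation} with $2\bar e^n$ and then eliminating $\bar e^n$ through the resolvent $Q$. The bound \eqref{eq:localD} is the least certain part of the plan.

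\emph{Error equation.} Integrate \eqref{eq:smooth} over $[t_n,t_{n+1}]$ and subtract \eqref{eq:scheme}. This gives \eqref{eq:error-equation} with right-hand side
\[
 L\kappa^n+\Bigl(h\,a^n\partial_x a^n-\int_{t_n}^{t_{n+1}}u_K\partial_x u_K\dd t\Bigr)-(I-P_K)\Phi\Delta W^n .
\]
Inside $\kappa^n$ and inside the nonlinear quadrature defect, insert the expansion
\[
 u_K(t)-u_K(t_n)=-\int_{t_n}^tF(u_K)\dd s+\int_{t_n}^tP_K\Phi\dd W .
\]
\begin{itemize}
 \item Terms linear in the stochastic integral with $\F_{t_n}$-measurable coefficients are conditionally centred. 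These are the stochastic part of $L\kappa^n$, a weighted integral $\int(s-t_n-h/2)P_K\Phi\dd W$, and the cross terms $\partial_x\bigl(u_K(t_n)\int P_K\Phi\dd W\bigr)$ with trapezoidal weights. Together with $-(I-P_K)\Phi\Delta W^n$ they go into $M^n$.
 \item Terms quadratic in the noise are split into their conditional expectation, which goes into $D^n$, and a centred remainder, which goes into $M^n$. Mixed noise--drift terms are handled the same way.
 \item All purely deterministic quadrature remainders go into $D^n$.
\end{itemize}
This gives $\E(M^n\mid\F_{t_n})=0$ and the required measurability.

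\emph{Local estimates.} Use the BDG inequality and Gaussian moments. Control the coefficients with \eqref{eq:low}, \eqref{eq:high} and \eqref{eq:intermediate}, choosing the Sobolev index as needed. Whenever a coefficient involves $P_K\Phi$ or $P_K u$ above $H^{9/2}$, pay the matching power of $K$, using $\norm{P_K\Phi}_{\HS(\U,H^q)}\le CK^{q-9/2}$.
\begin{itemize}
 \item The weighted stochastic integral in $L\kappa^n$ needs only $H^3$ regularity of $\Phi$. It gives $h^{3/2}$ in $L^2$, and $h^{3/2}K^{1/2}$ in $H^2$, since $\Phi$ is used at $H^5$ there.
 \item The drift--noise interactions give the terms $h^{5/2}K^{3/2}$ and $h^{5/2}K^{7/2}$.
 \item The tail $(I-P_K)\Phi\Delta W^n$ gives $h^{1/2}K^{-9/2}$ in $L^2$ and $h^{1/2}K^{-5/2}$ in $H^2$.
 \item For \eqref{eq:localq}, bound $\kappa^n$ directly in $H^2$. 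The noise part contributes $h^{3/2}$. The drift part is a trapezoidal defect of size $h^2\sup\norm{F(u_K)}_{H^2}$, which costs $H^5$ regularity and hence $K^{1/2}$.
 \item For \eqref{eq:localD}, the deterministic trapezoidal defect is $h^3$ times second time derivatives. Applying $L$ to this defect needs roughly $H^9$ control, which \eqref{eq:high} provides at cost $K^{9/2}$. The conditional expectations of the quadratic noise terms are $O(h^2)$ and use only $H^{9/2}$ norms.
\end{itemize}
I expect \eqref{eq:localD} to be the main obstacle. The difficulty is to arrange the expansion so that only $h^2$, with no $K$, appears at second order, and all high-regularity costs are pushed to $h^3$. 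I would first try a second-order It\^o--Taylor expansion of $u_K$ around $t_n$, and check the regularity needed term by term.

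\emph{Identity.} Write $N^n=a^n\partial_x a^n-m^n\partial_x m^n$ and $d=\bar e^n=a^n-m^n$.
\begin{enumerate}
 \item Pair \eqref{eq:error-equation} with $2\bar e^n$. The skew-adjointness of $\partial_x^3$ gives
 \[
 \norm{e^{n+1}}_2^2-\norm{e^n}_2^2+2\lambda h\norm{\bar e^n}_2^2+2h\ip{N^n}{\bar e^n}=2\ip{M^n+D^n}{\bar e^n}.
 \]
 \item Since $N^n=\partial_x(a^nd)-d\partial_x d$, we get $2\ip{N^n}{d}=\int_\T(\partial_x a^n)d^2$.
 \item Equation \eqref{eq:error-equation} can be rewritten as
 \[
 \bar e^n=Qe^n-\tfrac h2QN^n+\tfrac12Q(M^n+D^n).
 \]
 Substituting this only into $2\ip{M^n}{\bar e^n}$ produces the terms $\ip{e^n}{Q^*M^n}$, $\ip{N^n}{Q^*M^n}$, $\ip{D^n}{Q^*M^n}$ and $\ip{M^n}{QM^n}$.
 \item Use $\ip{M^n}{QM^n}=\norm{QM^n}_2^2+\tfrac{\lambda h}2\norm{QM^n}_2^2$. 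Then complete the square in the damping term, $2\lambda h\norm{\bar e^n-\tfrac12QM^n}_2^2$.
 \item Regroup the remaining $\lambda h\ip{\cdot}{QM^n}$ contributions using $R=2Q-I$ and $\tfrac h2QL=I-Q$. This converts $Q^*M^n$ into $w^n=R^*QM^n$ and yields \eqref{eq:error-identity}.
\end{enumerate}
Step 5 is a finite linear-algebra check on Fourier multipliers, so I would verify it mode by mode.
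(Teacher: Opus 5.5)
\textbf{Overall.} Your derivation of \eqref{eq:error-equation} is fine. Your route to \eqref{eq:error-identity} also works: you pair with $2\bar e^n$, substitute $\bar e^n=Q\bigl(e^n-\tfrac h2N^n+\tfrac12(M^n+D^n)\bigr)$ into $2\ip{M^n}{\bar e^n}$, and complete the square. Step~5 closes because $Q^*(I+\tfrac h2L)-\lambda hQ^*=Q^*(I-\tfrac h2L^*)=R^*$, using $L+L^*=2\lambda I$. Hence the leftover coefficient of $QM^n$ is exactly $w^n$. The paper instead applies \eqref{eq:resolvents} to $v=e^n-\tfrac h2N^n+\tfrac12D^n$, which is equivalent.

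\textbf{The gap.} The genuine gap is \eqref{eq:localD}. You leave it open, and the reason you sketch for it is not valid. Since $u_K$ is an It\^o process, it has no second time derivative. So the drift part of $L\kappa^n$ is not an $h^3$ quadrature defect. That drift part is $LS_L+LS_{\mathrm{nl}}$, where
\[
 S_L=\int_0^h(h/2-\tau)Lv_\tau\dd\tau,\qquad S_{\mathrm{nl}}=\int_0^h(h/2-\tau)v_\tau\partial_xv_\tau\dd\tau,\qquad v_\tau=u_K(t_n+\tau).
\]
A direct bound gives $\norm{LS_L}_2\le Ch^2\sup_t\norm{u_K(t)}_{H^6}$, which is $O(h^2K^{3/2})$ by \eqref{eq:intermediate}. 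This is not dominated by $C(h^2+h^3K^{9/2})$ uniformly in $K$ when $K\sim h^{-a}$ with $0<a<1/3$. For $K=\lceil h^{-2/9}\rceil$, as in Theorem~\ref{thm:main}, it would only give $h^{-1}\sum_n\norm{D^n}_2^2\sim h^{4/3}$ instead of $h^2$.

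\textbf{The missing step.} You need a second integration by parts in time that exploits the zero mean of the weight. Set $g_h(\tau)=\tau(h-\tau)/2$. Then $g_h'=h/2-\tau$ and $g_h(0)=g_h(h)=0$, so
\[
 LS_L=-\int_0^hg_h(\tau)L^2\dd v_\tau
 =\int_0^hg_h(\tau)L^2F(v_\tau)\dd\tau-\int_0^hg_h(\tau)L^2P_K\Phi\dd W(t_n+\tau).
\]
\begin{itemize}
\item The first term is $O(h^3K^{9/2})$ by \eqref{eq:high} and \eqref{eq:intermediate}, and it goes into $D^n$.
\item The second term is a Wiener integral with deterministic integrand, hence conditionally centred. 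It is precisely the source of the $h^{5/2}K^{3/2}$ and $h^{5/2}K^{7/2}$ terms in \eqref{eq:localM0}--\eqref{eq:localM2}, and the paper places it in $M^n$.
\item Only the linear part needs this treatment. The term $LS_{\mathrm{nl}}$ is bounded directly by $Ch^2\sup_t\norm{u_K(t)}_{H^4}^2$, which costs no power of $K$.
\end{itemize}
Your suggested second-order It\^o--Taylor expansion, applied to $L^2v_\tau$, reproduces this decomposition via stochastic Fubini. So the direction is viable, but as written the proof of \eqref{eq:localD} is missing.

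\textbf{A simplification.} Splitting the quadratic noise terms into conditional expectations and centred remainders is unnecessary. Let $S_W=\int_0^h(\tau-h/2)P_K\Phi\dd W(t_n+\tau)$ and $\chi_\tau=v_\tau-a^n$. Both $\partial_x((a^n-v_0)S_W)$ and $\int_0^h\chi_\tau\partial_x\chi_\tau\dd\tau$ are already $O(h^2)$ in $L^p(\Omega;L^2)$. This follows from H\"older's inequality and $\norm{u_K(t+\tau)-u_K(t)}_{L^{2p}(\Omega;H^1)}\le C\tau^{1/2}$. So they can go wholly into $D^n$, as in the paper.
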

\begin{proof}
\medskip\noindent\emph{Step 1: error decomposition.}
For $0\le\tau\le h$, set $v_\tau=u_K(t_n+\tau)$ and $\chi_\tau=v_\tau-a^n$.
Since $\int_0^h\chi_\tau\dd\tau=-\kappa^n$, the expansion
$v_\tau \partial_x v_\tau=a^n\partial_x a^n+\partial_x(a^n\chi_\tau)+\chi_\tau\partial_x \chi_\tau$
and the integral form of \eqref{eq:smooth} give
\[
 \begin{aligned}
 &v_h-v_0+hF(a^n)-\Phi\Delta W^n\\
 ={}&\underbrace{L\kappa^n}_{\mathrm I}
       +\underbrace{\partial_x(a^n\kappa^n)}_{\mathrm{II}}
       +\underbrace{\left(-\int_0^h\chi_\tau\partial_x \chi_\tau\dd\tau\right)}_{\mathrm{III}}
       +\underbrace{\bigl(-(I-P_K)\Phi\Delta W^n\bigr)}_{\mathrm{IV}}.
 \end{aligned}
\]

For terms $\mathrm I$ and $\mathrm{II}$, the product rule with $\tau-h/2$ gives
\[
 \kappa^n=\frac h2(v_h+v_0)-\int_0^h v_\tau\dd\tau
   =\int_0^h(\tau-h/2)\dd v_\tau.
\]
Substituting $\dd v_\tau=-F(v_\tau)\dd\tau+P_K\Phi\dd W(t_n+\tau)$ gives $\kappa^n=S_W+S_L+S_{\mathrm{nl}}$, where
\[
 \begin{aligned}
 S_W&=\int_0^h(\tau-h/2)P_K\Phi\dd W(t_n+\tau),\\
 S_L&=\int_0^h(h/2-\tau)Lv_\tau\dd\tau,\qquad
 S_{\mathrm{nl}}=\int_0^h(h/2-\tau)v_\tau \partial_x v_\tau\dd\tau.
 \end{aligned}
\]
Thus $\mathrm I=LS_W+LS_L+LS_{\mathrm{nl}}$. For $LS_L$, integration by parts with $g_h(\tau)=\tau(h-\tau)/2$ gives
\[
 \begin{aligned}
 LS_L&=\int_0^h g_h'(\tau)L^2v_\tau\dd\tau
      =-\int_0^h g_h(\tau)L^2\dd v_\tau\\
 &=\int_0^h g_h(\tau)L^2F(v_\tau)\dd\tau
   -\int_0^h g_h(\tau)L^2P_K\Phi\dd W(t_n+\tau).
 \end{aligned}
\]

For term $\mathrm{II}$, we write
\[
 \partial_x(a^n\kappa^n)=\partial_x(v_0S_W)+\partial_x((a^n-v_0)S_W)
 +\partial_x\bigl(a^n(S_L+S_{\mathrm{nl}})\bigr).
\]
Define $M^n$ and $D^n$ by
\begin{align}
 M^n={}&LS_W+\partial_x(v_0S_W)
       -\int_0^hg_h(\tau)L^2P_K\Phi\dd W(t_n+\tau)
       -(I-P_K)\Phi\Delta W^n,\label{eq:M}\\
 D^n={}&\int_0^hg_h(\tau)L^2F(v_\tau)\dd\tau+LS_{\mathrm{nl}}
       +\partial_x((a^n-v_0)S_W)\notag\\
 &+\partial_x(a^n(S_L+S_{\mathrm{nl}}))-\int_0^h\chi_\tau\partial_x \chi_\tau\dd\tau.\label{eq:D}
\end{align}
By \eqref{eq:M}--\eqref{eq:D}, $v_h-v_0+hF(a^n)=\Phi\Delta W^n+M^n+D^n$. Subtracting \eqref{eq:scheme} gives \eqref{eq:error-equation}.

All terms in \eqref{eq:M}--\eqref{eq:D} are $\F_{t_{n+1}}$-measurable.
For fixed $K$, Lemma~\ref{lem:comparison-moments} and Gaussian moment bounds
give their integrability in the stated spaces.
Since $v_0$ is $\F_{t_n}$-measurable and $S_W$ is independent of
$\F_{t_n}$ with mean zero,
\[
 \E\bigl(\partial_x(v_0S_W)\mid\F_{t_n}\bigr)
 =\partial_x\bigl(v_0\E(S_W\mid\F_{t_n})\bigr)=0.
\]
The remaining terms in \eqref{eq:M} are It\^o integrals with deterministic integrands over $[t_n,t_{n+1}]$ and therefore have zero conditional expectation given $\F_{t_n}$. Thus $\E(M^n\mid\F_{t_n})=0$.

\medskip\noindent\emph{Step 2: local moment estimates.}
We first prove \eqref{eq:localM0}--\eqref{eq:localM2} for $M^n$. A standard Fourier argument gives, for every Sobolev index $\beta$,
\[
 \begin{aligned}
 \norm{P_K\Phi}_{\HS(\U,H^\beta)}
 &\le C K^{\max\{\beta-9/2,0\}}\norm\Phi_{\HS(\U,H^{9/2})},\\
 \norm{(I-P_K)\Phi}_{\HS(\U,H^\beta)}
 &\le CK^{\beta-9/2}\norm\Phi_{\HS(\U,H^{9/2})},\qquad \beta\le9/2.
 \end{aligned}
\]
The deterministic time weights satisfy
\[
 \int_0^h(\tau-h/2)^2\dd\tau=h^3/12,
 \qquad \int_0^h g_h(\tau)^2\dd\tau=h^5/120.
\]
It\^o's isometry and Gaussian moment bounds, together with
$\norm{Lf}_{H^\beta}\le C_\Lambda\norm f_{H^{\beta+3}}$, give
\[
 \begin{aligned}
 \norm{S_W}_{L^p(\Omega;H^\beta)}
 &\le C_ph^{3/2}\norm{P_K\Phi}_{\HS(\U,H^\beta)},\\
 \norm{LS_W}_{L^p(\Omega;H^\beta)}
 &\le C_ph^{3/2}\norm{P_K\Phi}_{\HS(\U,H^{\beta+3})},\\
 \left\|\int_0^h g_h(\tau)L^2P_K\Phi\dd W(t_n+\tau)\right\|_{L^p(\Omega;H^\beta)}
 &\le C_ph^{5/2}\norm{P_K\Phi}_{\HS(\U,H^{\beta+6})}.
 \end{aligned}
\]
The $H^5$, $H^6$ and $H^8$ projection bounds yield the factors $K^{1/2}$, $K^{3/2}$ and $K^{7/2}$ in \eqref{eq:localM0}--\eqref{eq:localM2}. For the product term, \eqref{eq:product} gives $\norm{\partial_x(fg)}_{H^2}\le C\norm f_{H^3}\norm g_{H^3}$. Using independence and \eqref{eq:low} in this bound gives
\[
 \norm{\partial_x(v_0S_W)}_{L^p(\Omega;H^2)}
 \le C\norm{v_0}_{L^p(\Omega;H^3)}
          \norm{S_W}_{L^p(\Omega;H^3)}
 \le Ch^{3/2}.
\]
Finally, for $\beta\le9/2$, the last term in \eqref{eq:M} satisfies
\[
 \norm{(I-P_K)\Phi\Delta W^n}_{L^p(\Omega;H^\beta)}
 \le C_ph^{1/2}\norm{(I-P_K)\Phi}_{\HS(\U,H^\beta)}
 \le Ch^{1/2}K^{\beta-9/2}.
\]
Combining the estimates for the four terms in \eqref{eq:M} proves \eqref{eq:localM0}--\eqref{eq:localM2}.

For the first term in \eqref{eq:D}, use $L^2F(v)=L^3v+L^2(v\partial_x v)$ and
\[
 \begin{aligned}
 \norm{L^3v}_2&\le C_\Lambda\norm v_{H^9},\\
 \norm{L^2(v\partial_x v)}_2&\le C_\Lambda\norm{v\partial_x v}_{H^6}
 \le C_\Lambda\norm v_{H^{9/2}}\norm v_{H^7}.
 \end{aligned}
\]
The last inequality follows from \eqref{eq:product} with $q=6$ and $q_0=7/2$. Using these bounds, $\int_0^h g_h(\tau)\dd\tau=h^3/12$, Minkowski's and H\"older's inequalities, \eqref{eq:high}, and \eqref{eq:intermediate} with $q=7$, we obtain
\begin{equation}\label{eq:localD-leading-term}
 \begin{aligned}
 &\Biggl\|\int_0^h g_h(\tau)L^2F(v_\tau)\dd\tau\Biggr\|_{L^p(\Omega;L^2)}
 \le Ch^3\Bigl\|\sup_{0\le t\le T}\norm{u_K(t)}_{H^9}\Bigr\|_{L^p(\Omega)}\\
 &\quad+Ch^3\Bigl\|\sup_{0\le t\le T}\norm{u_K(t)}_{H^{9/2}}\Bigr\|_{L^{2p}(\Omega)}
  \Bigl\|\sup_{0\le t\le T}\norm{u_K(t)}_{H^7}\Bigr\|_{L^{2p}(\Omega)}
 \\
 &\quad\le Ch^3(K^{9/2}+K^{5/2})\le Ch^3K^{9/2}.
 \end{aligned}
\end{equation}
For the remaining terms in $D^n$, we first establish an estimate for time increments of $u_K$ in $H^1$. By \eqref{eq:product} and \eqref{eq:low},
\[
 \norm{F(u_K(t))}_{L^{2p}(\Omega;H^1)}
 \le C_\Lambda\left(\norm{u_K(t)}_{L^{2p}(\Omega;H^4)}
       +\norm{u_K(t)}_{L^{4p}(\Omega;H^2)}^2\right)\le C.
\]
Using $\norm{P_K\Phi}_{\HS(\U,H^1)}\le\norm\Phi_{\HS(\U,H^{9/2})}$, the integral form of \eqref{eq:smooth} gives, for $0\le\tau\le1$ and $0\le t\le t+\tau\le T$,
\[
 \begin{aligned}
 &\norm{u_K(t+\tau)-u_K(t)}_{L^{2p}(\Omega;H^1)}\\
 &\quad\le\int_t^{t+\tau}\norm{F(u_K(\sigma))}_{L^{2p}(\Omega;H^1)}\dd\sigma
       +C_p\tau^{1/2}\norm{P_K\Phi}_{\HS(\U,H^1)}\\
 &\quad\le C(\tau+\tau^{1/2})\le C\tau^{1/2}.
 \end{aligned}
\]
Since $a^n-v_0=(v_h-v_0)/2$ and
$\chi_\tau=(v_\tau-v_0)/2+(v_\tau-v_h)/2$, it follows that
\[
 \norm{a^n-v_0}_{L^{2p}(\Omega;H^1)}
 +\sup_{0\le\tau\le h}\norm{\chi_\tau}_{L^{2p}(\Omega;H^1)}\le Ch^{1/2}.
\]

Since $\int_0^h|h/2-\tau|\dd\tau=h^2/4$, the bounds
$\norm{Lf}_{H^1}\le C_\Lambda\norm f_{H^4}$, $\norm{L(v\partial_x v)}_2\le C_\Lambda\norm v_{H^4}^2$, and
$\norm{v\partial_x v}_{H^1}\le C\norm v_{H^2}^2$ imply
\[
 \begin{gathered}
 \norm{LS_{\mathrm{nl}}}_{L^p(\Omega;L^2)}
 \le Ch^2\left\|\sup_{0\le t\le T}\norm{u_K(t)}_{H^4}\right\|_{L^{2p}(\Omega)}^2
 \le Ch^2,\\
 \norm{S_L}_{L^{2p}(\Omega;H^1)}
 +\norm{S_{\mathrm{nl}}}_{L^{2p}(\Omega;H^1)}\le Ch^2.
 \end{gathered}
\]
Using $\norm{\partial_x(fg)}_2\le C\norm f_{H^1}\norm g_{H^1}$ and H\"older's inequality, we obtain
\[
 \begin{aligned}
 &\norm{\partial_x(a^n(S_L+S_{\mathrm{nl}}))}_{L^p(\Omega;L^2)}\\
 &\quad\le C\norm{a^n}_{L^{2p}(\Omega;H^1)}
       \bigl(\norm{S_L}_{L^{2p}(\Omega;H^1)}
            +\norm{S_{\mathrm{nl}}}_{L^{2p}(\Omega;H^1)}\bigr)
 \le Ch^2,\\
 &\norm{\partial_x((a^n-v_0)S_W)}_{L^p(\Omega;L^2)}\\
 &\quad\le C\norm{a^n-v_0}_{L^{2p}(\Omega;H^1)}
          \norm{S_W}_{L^{2p}(\Omega;H^1)}
 \le Ch^{1/2}h^{3/2}=Ch^2,\\
 &\Biggl\|\int_0^h\chi_\tau\partial_x \chi_\tau\dd\tau\Biggr\|_{L^p(\Omega;L^2)}
 \le C\int_0^h\norm{\chi_\tau}_{L^{2p}(\Omega;H^1)}^2\dd\tau
 \le Ch^2.
 \end{aligned}
\]
Combining these bounds with \eqref{eq:localD-leading-term} proves \eqref{eq:localD}.

Finally, to prove \eqref{eq:localq}, we return to $\kappa^n=S_W+S_L+S_{\mathrm{nl}}$. The bound $\norm{Lf}_{H^\beta}\le C_\Lambda\norm f_{H^{\beta+3}}$ and \eqref{eq:product} give
\[
 \norm{S_L}_{H^2}\le Ch^2\sup_{0\le\tau\le h}\norm{v_\tau}_{H^5},
 \qquad
 \norm{S_{\mathrm{nl}}}_{H^2}\le Ch^2\sup_{0\le\tau\le h}\norm{v_\tau}_{H^3}^2.
\]
The Gaussian estimate for $S_W$, \eqref{eq:intermediate} with $q=5$ and \eqref{eq:low} therefore yield
\[
 \norm{\kappa^n}_{L^p(\Omega;H^2)}
 \le C\bigl(h^{3/2}+h^2K^{1/2}+h^2\bigr)
 \le C\bigl(h^{3/2}+h^2K^{1/2}\bigr).
\]

\medskip\noindent\emph{Step 3: the error propagation identity.}
Propositions~\ref{prop:roots} and~\ref{prop:smooth} imply that $U^n,m^n,a^n,\bar e^n,e^n\in H^3$ almost surely. Moreover, \eqref{eq:M}--\eqref{eq:D} give $M^n\in H^2$ and $D^n\in L^2$, so all pairings below are well defined. Using $m^n=a^n-\bar e^n$ from \eqref{eq:local-defs}, we obtain $a^n\partial_x a^n-m^n\partial_x m^n=\partial_x(a^n\bar e^n)-\bar e^n\partial_x\bar e^n$. Periodic integration by parts gives
\begin{equation}\label{eq:nonlinear-pairing}
 2\ip{a^n\partial_x a^n-m^n\partial_x m^n}{\bar e^n}
 =-2\int_\T a^n\bar e^n\partial_x\bar e^n
 =\int_\T(\partial_x a^n)(\bar e^n)^2.
\end{equation}
We now write \eqref{eq:error-equation} in the operator form
\[
 \bigl(I+\tfrac h2L\bigr)e^{n+1}
 =\bigl(I-\tfrac h2L\bigr)e^n-h\bigl(a^n\partial_x a^n-m^n\partial_x m^n\bigr)
 +M^n+D^n.
\]
Applying $Q$ and using $Q(I-hL/2)=R$ gives
\[
 e^{n+1}=Re^n-hQ\bigl(a^n\partial_x a^n-m^n\partial_x m^n\bigr)
               +QM^n+QD^n.
\]
Using $I+R=2Q$, we rearrange this as
\[
 \begin{aligned}
 &e^{n+1}+\frac h2\bigl(a^n\partial_x a^n-m^n\partial_x m^n\bigr)
       -\frac12D^n-QM^n\\
 &\qquad=R\left(e^n-\frac h2\bigl(a^n\partial_x a^n-m^n\partial_x m^n\bigr)
                     +\frac12D^n\right),\\
 &\bar e^n-\frac12QM^n
   =Q\left(e^n-\frac h2\bigl(a^n\partial_x a^n-m^n\partial_x m^n\bigr)
                     +\frac12D^n\right).
 \end{aligned}
\]
Applying \eqref{eq:resolvents} to the common argument of $R$ and $Q$, expanding the squared norms, and using $\ip{Rv}{QM^n}=\ip{v}{w^n}$ together with \eqref{eq:nonlinear-pairing} gives \eqref{eq:error-identity}.
\end{proof}

\begin{theorem}[Strong error estimates]\label{thm:main}
Let $u_0\in H^{9/2}_0$ be deterministic and let $\Phi\in\HS(\U,H^{9/2}_0)$. For every $T>0$, $0\le\Lambda<\infty$ and real $r\ge1$, there exists $C<\infty$ such that, for every $0<h\le1$ and $\lambda\in[0,\Lambda]$, the solution $u$ of \eqref{eq:spde} given by Theorem~\ref{thm:exact} and every solution of \eqref{eq:scheme} satisfy
\begin{equation}\label{eq:main}
 \E\max_{0\le n\le\lfloor T/h\rfloor}
       \norm{u(t_n)-U^n}_2^{2r}\le Ch^{2r}.
\end{equation}
The constant $C$ depends only on $r,T,\Lambda$, $\norm{u_0}_{H^{9/2}}$ and $\norm\Phi_{\HS(\U,H^{9/2})}$.
\end{theorem}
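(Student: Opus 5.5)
We split $u(t_n)-U^n=(u(t_n)-u_K(t_n))+e^n$ and choose the cutoff $K$ as a function of $h$. By \eqref{eq:approx}, the first piece contributes $C_rK^{-9/2}$ in $L^{2r}(\Omega;\sup_t L^2)$. We therefore take $K=\lceil h^{-2/9}\rceil$, so that $K^{-9/2}\le h$. With this choice the local bounds of Proposition~\ref{prop:one-step} become $\norm{D^n}_{L^p(\Omega;L^2)}\le C(h^2+h^3K^{9/2})\le Ch^2$ and $\norm{M^n}_{L^p(\Omega;L^2)}\le C(h^{3/2}+h^{5/2}K^{3/2}+h^{1/2}K^{-9/2})\le Ch^{3/2}$. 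They also give $\norm{M^n}_{L^p(\Omega;H^2)}\le C(h^{3/2}K^{1/2}+h^{5/2}K^{7/2}+h^{1/2}K^{-5/2})$, which is $O(h^{1+\epsilon})$ for some $\epsilon>0$ (here $h^{1/2}K^{-5/2}\approx h^{1/2+5/9}$). It remains to show $\E\max_n\norm{e^n}_2^{2r}\le Ch^{2r}$.

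We sum the identity \eqref{eq:error-identity} over $n$, with $e^0=0$. The damping term is nonnegative and is discarded. The transport term is bounded by $h\norm{\partial_x a^n}_\infty\norm{\bar e^n}_2^2\le h\norm{\partial_x a^n}_\infty(\norm{e^n}_2^2+\norm{e^{n+1}}_2^2)/2$. The right-hand side of \eqref{eq:error-identity} is handled term by term.
\begin{itemize}
\item $2\ip{D^n}{\bar e^n}\le h(\norm{e^n}_2^2+\norm{e^{n+1}}_2^2)+h^{-1}\norm{D^n}_2^2$, and the last term sums to $O(h^2)$.
\item $\norm{QM^n}_2^2$ sums to $O(h^2)$.
\item $2\ip{e^n}{w^n}$ is a martingale increment: $e^n$ is $\F_{t_n}$-measurable and $\E(w^n\mid\F_{t_n})=R^*Q\,\E(M^n\mid\F_{t_n})=0$. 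Discrete BDG bounds its maximal partial sum by $C\|(\sum_n\norm{e^n}_2^2\norm{M^n}_2^2)^{1/2}\|\le Ch\,\|\max_n\norm{e^n}_2\|$, which Young's inequality absorbs.
\item $\ip{D^n}{w^n}$ is $O(h^{7/2})$ per step.
\item For the nonlinear pairing, write $a^n\partial_xa^n-m^n\partial_xm^n=\partial_x(a^n\bar e^n)-\bar e^n\partial_x\bar e^n$ and move the derivative onto $w^n\in H^2$. This gives $h\bigl(\norm{a^n}_\infty\norm{\bar e^n}_2\norm{\partial_x w^n}_2+\norm{\bar e^n}_2^2\norm{\partial_x w^n}_\infty\bigr)$. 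The first part is controlled by Young's inequality: it is at most $h\norm{a^n}_\infty^2\norm{\bar e^n}_2^2+h\norm{M^n}_{H^1}^2$, and the second piece sums to $o(h^2)$. The second part is at most $h\cdot h^{1+\epsilon}\norm{\bar e^n}_2^2$.
\end{itemize}

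This yields, with $E_j=\max_{n\le j}\norm{e^n}_2^2$, a pathwise inequality
\[
 E_j\le \mathcal R_j+\max_{i\le j}\Bigl|\sum_{n<i}2\ip{e^n}{w^n}\Bigr|+C\sum_{n<j}h\,\gamma_n E_{n+1},
\]
where $\gamma_n=1+\norm{\partial_x a^n}_\infty+\norm{a^n}_\infty^2+\norm{M^n}_{H^2}$ and $\mathcal R_j$ collects terms of size $O(h^2)$ in every $L^p$.

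The main obstacle is the Gronwall step. The coefficient $\gamma_n$ is random and unbounded, and the martingale term cannot be handled pathwise. Following Step~1 of the proof of Theorem~\ref{thm:exact}, we first isolate the implicit term. For $h\gamma_n$ small we absorb $E_{n+1}$ into the left side; otherwise we use the a priori bounds of Proposition~\ref{prop:numerical} together with \eqref{eq:low} and Chebyshev's inequality. Discrete Gronwall then gives
\[
 E_N\le\exp\Bigl(C\sum_n h\gamma_n\Bigr)\bigl(\mathcal R_N+\mathcal M_N^*\bigr).
\]
The exponential factor has all moments. Indeed, $h\sum\norm{\partial_x a^n}_\infty$ is a Riemann sum of $\int_0^T\norm{\partial_x u_K}_\infty$, so we bound it by $\int\norm{\partial_xu_K}_\infty$ plus a discretization error controlled through $H^{9/2}$ time increments, and then apply \eqref{eq:derivative-exp}. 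The term $h\sum\norm{a^n}_\infty^2$ is bounded via $\norm{a^n}_\infty\le C\norm{\partial_x a^n}_\infty$ (mean zero) and the same exponential bound. We then apply Hölder's inequality and the BDG estimate with $\norm{\mathcal M_N^*}_{L^{p}}\le Ch\norm{E_N}^{1/2}_{L^{p'}}$, as in \eqref{eq:exact-bdg}, and close with Young's inequality using the finiteness of $\norm{E_N}_{L^r}$ from Proposition~\ref{prop:numerical} and \eqref{eq:low}. The result is $\norm{E_N}_{L^r(\Omega)}\le Ch^2$, which together with the comparison estimate proves \eqref{eq:main}.
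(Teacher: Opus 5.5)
Your architecture matches the paper's: the choice $K=\lceil h^{-2/9}\rceil$, summing \eqref{eq:error-identity}, isolating $2\ip{e^n}{w^n}$ as a martingale increment, then Gronwall, H\"older and Young. The Gronwall step fails as you set it up, however, because your claim that $\exp(C\sum_n h\gamma_n)$ has all moments is not justified. The decisive problem is the term $\norm{a^n}_\infty^2$ in $\gamma_n$. You create it by splitting Young's inequality as $h\norm{a^n}_\infty\norm{\bar e^n}_2\norm{\partial_x w^n}_2\le h\norm{a^n}_\infty^2\norm{\bar e^n}_2^2+h\norm{\partial_x w^n}_2^2$. Bounding $\norm{a^n}_\infty\le C\norm{\partial_x a^n}_\infty$ then requires exponential moments of $\int_0^T\norm{\partial_x u_K}_\infty^2\dd t$. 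But \eqref{eq:derivative-exp} only controls the first power $\int_0^T\norm{\partial_x u_K}_\infty\dd t$. Such a bound is not available, and you cannot expect it for arbitrary exponents: already for a Brownian motion, $\E\exp(A\int_0^TB_t^2\dd t)=\infty$ once $A\ge\pi^2/(8T^2)$, while after H\"older you need exponents that grow with $r$. The fix is to split Young the other way, $h\norm{a^n}_\infty\norm{\bar e^n}_2\norm{\partial_x w^n}_2\le h\norm{\bar e^n}_2^2+h\norm{a^n}_\infty^2\norm{w^n}_{H^1}^2$. Then move the last term into $\mathcal R_j$, where polynomial moments suffice and the sum is $O(h^{19/9})$. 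The paper does exactly this with the term $h\norm{w^n}_{H^2}^2\norm{a^n}_2^2$ in $B_n$.

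The same issue affects the rest of your exponent in milder form. The quantity $h\sum_n\norm{M^n}_{H^2}$ (recall that $M^n$ contains $\partial_x(u_K(t_n)S_W)$) and the Riemann-sum error $h\sum_n\norm{\partial_x a^n}_\infty-\int_0^T\norm{\partial_x u_K}_\infty\dd t$ are small only in $L^p(\Omega)$. So ``then apply \eqref{eq:derivative-exp}'' does not control their exponentials; they must be localized rather than exponentiated. The paper's event \eqref{eq:good-event} requires $h\sup_t\norm{\partial_x u_K}_\infty\le\varepsilon$, $\sum_n\norm{\kappa^n}_{H^2}\le\varepsilon$ and $h\sum_n\norm{w^n}_{H^2}\le\varepsilon$. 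On this event the coefficients in \eqref{eq:recursion} satisfy $\max_n\gamma_n\le1/8$, and only $\int_0^T\norm{\partial_x u_K}_\infty\dd t$ remains in the exponent. Off the event, $\PP(\Omega_h^c)\le Ch^{8r}$ is paired with $\norm{\mathcal E_N}_{L^{2r}(\Omega)}\le C$. For the Riemann sum, the paper avoids time-increment estimates by using the identity $ha^n=\int_{t_n}^{t_{n+1}}u_K\dd t+\kappa^n$ from \eqref{eq:local-defs}. This gives $h\norm{\partial_x a^n}_\infty\le\int_{t_n}^{t_{n+1}}\norm{\partial_x u_K}_\infty\dd t+C\norm{\kappa^n}_{H^2}$. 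Finally, $e^0=(P_K-I)u_0\ne0$, but this is harmless since $\norm{e^0}_2^2\le CK^{-9}\le Ch^2$.
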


\begin{proof}
Let $(U^n,m^n)_{n\ge0}$ be a solution and set
$N=\lfloor T/h\rfloor$, $K=\lceil h^{-2/9}\rceil$ and
$\mathcal E_j=\max_{0\le n\le j}\norm{e^n}_2^2$ for $0\le j\le N$.
By the triangle inequality and the comparison estimate \eqref{eq:approx},
\begin{equation}\label{eq:total-error-decomposition}
 \begin{aligned}
 &\left\|\max_{0\le n\le N}\norm{u(t_n)-U^n}_2\right\|_{L^{2r}(\Omega)}\\
 &\quad\le\left\|\sup_{0\le t\le T}\norm{u(t)-u_K(t)}_2\right\|_{L^{2r}(\Omega)}
       +\left\|\max_{0\le n\le N}\norm{e^n}_2\right\|_{L^{2r}(\Omega)}\\
 &\quad\le CK^{-9/2}+\norm{\mathcal E_N}_{L^r(\Omega)}^{1/2}
       \le Ch+\norm{\mathcal E_N}_{L^r(\Omega)}^{1/2}.
 \end{aligned}
\end{equation}
It therefore suffices to prove $\norm{\mathcal E_N}_{L^r(\Omega)}\le Ch^2$.

\medskip\noindent\emph{Step 1: error propagation inequality.}
For the nonlinear term in \eqref{eq:error-identity}, periodic integration by parts gives
\[
 \ip{a^n\partial_x a^n-m^n\partial_x m^n}{w^n}
 =-\int_\T a^n\bar e^n\partial_x w^n
       +\frac12\int_\T(\bar e^n)^2\partial_x w^n.
\]
The embedding $H^2\hookrightarrow W^{1,\infty}$ gives
\[
 \left|\ip{a^n\partial_x a^n-m^n\partial_x m^n}{w^n}\right|
 \le C\norm{w^n}_{H^2}
       \bigl(\norm{a^n}_2\norm{\bar e^n}_2+\norm{\bar e^n}_2^2\bigr).
\]
Young's inequality and $\norm{w^n}_2\le\norm{M^n}_2$ imply
\begin{align*}
 2|\ip{D^n}{\bar e^n}|
 &\le h\norm{\bar e^n}_2^2+h^{-1}\norm{D^n}_2^2,\\
 Ch\norm{w^n}_{H^2}\norm{a^n}_2\norm{\bar e^n}_2
 &\le h\norm{\bar e^n}_2^2+Ch\norm{w^n}_{H^2}^2\norm{a^n}_2^2,\\
 |\ip{D^n}{w^n}|
 &\le(2h)^{-1}\norm{D^n}_2^2+(h/2)\norm{M^n}_2^2.
\end{align*}
We discard the nonnegative damping term in \eqref{eq:error-identity} and use $|\int_\T(\partial_x a^n)(\bar e^n)^2|\le\norm{\partial_x a^n}_\infty\norm{\bar e^n}_2^2$ and $\norm{QM^n}_2\le\norm{M^n}_2$. Since $h\le1$ and $\norm{\bar e^n}_2^2\le\tfrac12(\norm{e^n}_2^2+\norm{e^{n+1}}_2^2)$, we obtain
\begin{multline}\label{eq:error-bound}
 \norm{e^{n+1}}_2^2-\norm{e^n}_2^2\le2\ip{e^n}{w^n}
 +C\bigl(\norm{M^n}_2^2+h\norm{w^n}_{H^2}^2\norm{a^n}_2^2
       +h^{-1}\norm{D^n}_2^2\bigr)\\
 +Ch\bigl(1+\norm{\partial_x a^n}_\infty+\norm{w^n}_{H^2}\bigr)
       \bigl(\norm{e^n}_2^2+\norm{e^{n+1}}_2^2\bigr).
\end{multline}
Here $C$ is independent of $h,K,n$ and $\lambda$.

Differentiating the identity $ha^n=\int_{t_n}^{t_{n+1}}u_K(t)\dd t+\kappa^n$ from \eqref{eq:local-defs} and using $H^2\hookrightarrow W^{1,\infty}$ gives
\[
 h\norm{\partial_x a^n}_\infty
 \le\int_{t_n}^{t_{n+1}}\norm{\partial_x u_K(t)}_\infty\dd t+C\norm{\kappa^n}_{H^2}.
\]
Consequently, \eqref{eq:error-bound} yields
\begin{equation}\label{eq:recursion}
 \norm{e^{n+1}}_2^2-\norm{e^n}_2^2
 \le2\ip{e^n}{w^n}
 +\gamma_n\bigl(\norm{e^n}_2^2+\norm{e^{n+1}}_2^2\bigr)
       +B_n,
\end{equation}
where
\begin{align*}
 \gamma_n&=C_*\left[h+\int_{t_n}^{t_{n+1}}\norm{\partial_x u_K(t)}_\infty\dd t
                  +\norm{\kappa^n}_{H^2}+h\norm{w^n}_{H^2}\right],\\
 B_n&=C_*\left[\norm{M^n}_2^2
          +h\norm{w^n}_{H^2}^2\norm{a^n}_2^2
          +h^{-1}\norm{D^n}_2^2\right].
\end{align*}
Here $C_*>0$ is chosen sufficiently large, independently of $h,K,n$ and $\lambda$.

\medskip\noindent\emph{Step 2: moment and martingale estimates.}
Since $h^{-2/9}\le K\le2h^{-2/9}$, it follows from Proposition~\ref{prop:one-step}, \eqref{eq:Q-contraction} and \eqref{eq:resolvents} that, for every $p\ge2$,
\begin{equation}\label{eq:chosen-local}
 \begin{aligned}
 \norm{M^n}_{L^p(\Omega;L^2)}&\le Ch^{3/2},&
 \norm{w^n}_{L^p(\Omega;H^2)}&\le Ch^{19/18},\\
 \norm{D^n}_{L^p(\Omega;L^2)}&\le Ch^2,&
 \norm{\kappa^n}_{L^p(\Omega;H^2)}&\le Ch^{3/2}.
 \end{aligned}
\end{equation}
Indeed, $h^{5/2}K^{3/2}\le Ch^{13/6}$, $h^{1/2}K^{-9/2}\le h^{3/2}$, $h^3K^{9/2}\le Ch^2$ and $h^2K^{1/2}\le Ch^{17/9}$,
while the three terms in \eqref{eq:localM2} give
\[
 \norm{w^n}_{L^p(\Omega;H^2)}
 \le C\bigl(h^{25/18}+h^{31/18}+h^{19/18}\bigr)
 \le Ch^{19/18}.
\]

We next estimate the sums over the time steps:
\begin{equation}\label{eq:summed}
 \left\|\sum_{0\le n<N}\norm{w^n}_2^2\right\|_{L^{2r}(\Omega)}\le Ch^2,
 \qquad
 \left\|\sum_{0\le n<N}B_n\right\|_{L^{4r/3}(\Omega)}\le Ch^2.
\end{equation}
Since $\norm{w^n}_2\le\norm{M^n}_2$, we use Minkowski's inequality and \eqref{eq:chosen-local} to obtain
\[
 \left\|\sum_{0\le n<N}\norm{w^n}_2^2\right\|_{L^{2r}(\Omega)}
 \le\sum_{0\le n<N}\norm{w^n}_{L^{4r}(\Omega;L^2)}^2
 \le CNh^3\le Ch^2.
\]
To estimate $\sum_{0\le n<N}B_n$, we use its definition and Minkowski's inequality:
\[
 \begin{aligned}
 &\Bigl\|\sum_{0\le n<N}B_n\Bigr\|_{L^{4r/3}(\Omega)}
 \le C_*\sum_{0\le n<N}\norm{M^n}_{L^{8r/3}(\Omega;L^2)}^2\\
 &\quad+C_*h\sum_{0\le n<N}
 \bigl\|\norm{w^n}_{H^2}^2\norm{a^n}_2^2\bigr\|_{L^{4r/3}(\Omega)}
 +C_*h^{-1}\sum_{0\le n<N}\norm{D^n}_{L^{8r/3}(\Omega;L^2)}^2.
 \end{aligned}
\]
The bounds for $M^n$ and $D^n$ in \eqref{eq:chosen-local} give
\[
 \sum_{0\le n<N}\norm{M^n}_{L^{8r/3}(\Omega;L^2)}^2
 +h^{-1}\sum_{0\le n<N}\norm{D^n}_{L^{8r/3}(\Omega;L^2)}^2
 \le CNh^3\le Ch^2.
\]
For the sum involving $w^n$ and $a^n$, the inequality $\norm{a^n}_2\le\sup_{0\le t\le T}\norm{u_K(t)}_2$ and \eqref{eq:low} give uniform moment bounds for $a^n$. H\"older's inequality and \eqref{eq:chosen-local} then yield
\[
 \begin{aligned}
 h\sum_{0\le n<N}
 \left\|\norm{w^n}_{H^2}^2\norm{a^n}_2^2\right\|_{L^{4r/3}(\Omega)}
 &\le h\sum_{0\le n<N}\norm{w^n}_{L^{16r/3}(\Omega;H^2)}^2
                   \norm{a^n}_{L^{16r/3}(\Omega;L^2)}^2\\
 &\le CNh\,h^{19/9}\le Ch^2.
 \end{aligned}
\]
Combining these bounds proves \eqref{eq:summed}.
The initial error satisfies
$\norm{e^0}_2^2=\norm{(P_K-I)u_0}_2^2\le CK^{-9}\le Ch^2$.
We also have
\[
 \mathcal E_N\le2\sup_{0\le t\le T}\norm{u_K(t)}_2^2
       +2\max_{0\le n\le N}\norm{U^n}_2^2.
\]
Lemma~\ref{lem:comparison-moments} and Proposition~\ref{prop:numerical} therefore give $\norm{\mathcal E_N}_{L^{2r}(\Omega)}\le C$.

To estimate the martingale term in \eqref{eq:recursion}, set
\[
 \mathfrak M_j=2\sum_{0\le n<j}\ip{e^n}{w^n},\qquad
 \mathfrak M_N^*=\max_{0\le j\le N}|\mathfrak M_j|.
\]
The bounds \eqref{eq:chosen-local} and $\norm{\mathcal E_N}_{L^{2r}(\Omega)}\le C$ imply $\ip{e^n}{w^n}\in L^1(\Omega)$.
By Proposition~\ref{prop:one-step}, $M^n$ is $\F_{t_{n+1}}$-measurable and
$\E(M^n\mid\F_{t_n})=0$.
Since $R^*Q$ is deterministic and $e^n$ is $\F_{t_n}$-measurable,
\[
 \E\bigl(\ip{e^n}{w^n}\mid\F_{t_n}\bigr)
 =\ip{e^n}{R^*Q\E(M^n\mid\F_{t_n})}=0.
\]
Thus $\mathfrak M$ is a martingale with respect to $(\F_{t_j})$.

The sum of squared martingale increments satisfies
\[
 \begin{aligned}
 \sum_{0\le n<N}|\mathfrak M_{n+1}-\mathfrak M_n|^2
 &=4\sum_{0\le n<N}|\ip{e^n}{w^n}|^2\\
 &\le4\sum_{0\le n<N}\norm{e^n}_2^2\norm{w^n}_2^2
 \le4\mathcal E_N\sum_{0\le n<N}\norm{w^n}_2^2.
 \end{aligned}
\]
The discrete BDG inequality and H\"older's inequality give
\begin{align}\label{eq:martingale}
 \norm{\mathfrak M_N^*}_{L^{4r/3}(\Omega)}
 &\le C\left\|\mathcal E_N^{1/2}
              \left(\sum_{0\le n<N}\norm{w^n}_2^2\right)^{1/2}
        \right\|_{L^{4r/3}(\Omega)}\notag\\
 &\le C\norm{\mathcal E_N}_{L^r(\Omega)}^{1/2}
        \left\|\sum_{0\le n<N}\norm{w^n}_2^2\right\|_{L^{2r}(\Omega)}^{1/2}
 \le Ch\norm{\mathcal E_N}_{L^r(\Omega)}^{1/2},
\end{align}
where \eqref{eq:summed} is used in the last inequality.

\medskip\noindent\emph{Step 3: localization and strong error estimate.}
We now sum \eqref{eq:recursion} pathwise. Since
$\norm{e^n}_2^2+\norm{e^{n+1}}_2^2\le2\mathcal E_{n+1}$ and $\gamma_n,B_n\ge0$, taking the maximum of the partial sums yields
\begin{equation}\label{eq:summed-error}
 \mathcal E_j\le\norm{e^0}_2^2+\sum_{0\le n<N}B_n
  +\mathfrak M_N^*+2\sum_{0\le n<j}\gamma_n\mathcal E_{n+1},
 \qquad 0\le j\le N.
\end{equation}
Fix $0<\varepsilon\le\min\{1,T,(32C_*)^{-1}\}$.
For $\varepsilon<h\le1$, Proposition~\ref{prop:numerical} and Theorem~\ref{thm:exact} give
\[
 \E\max_{0\le n\le N}\norm{u(t_n)-U^n}_2^{2r}
 \le C\le C\varepsilon^{-2r}h^{2r}.
\]
Since $\varepsilon$ is fixed, it suffices to consider $0<h\le\varepsilon$. Define
\begin{equation}\label{eq:good-event}
 \Omega_h=\left\{
 \begin{aligned}
 &h\sup_{0\le t\le T}\norm{\partial_x u_K(t)}_\infty\le \varepsilon,\\
 &\sum_{0\le n<N}\norm{\kappa^n}_{H^2}\le \varepsilon,
 \quad h\sum_{0\le n<N}\norm{w^n}_{H^2}\le \varepsilon
 \end{aligned}\right\}=\Omega_{1,h}\cap\Omega_{2,h}\cap\Omega_{3,h},
\end{equation}
where $\Omega_{j,h}$ denotes the event defined by the $j$th condition above. On $\Omega_h$, the definition of $\gamma_n$ gives
\[
 \begin{aligned}
 \max_{0\le n<N}\gamma_n
 &\le C_*(h+3\varepsilon)\le4C_*\varepsilon\le\tfrac18,\\
 \sum_{0\le n<N}\gamma_n
 &\le C_*\left(T+2+\int_0^T\norm{\partial_x u_K(t)}_\infty\dd t\right),
 \end{aligned}
\]
where we used $\varepsilon\le1$ and summed the interval integrals over $[0,t_N]\subset[0,T]$. For $1\le j\le N$, moving $2\gamma_{j-1}\mathcal E_j$ to the left-hand side of \eqref{eq:summed-error} leaves a coefficient $1-2\gamma_{j-1}\ge3/4$. The discrete Gronwall inequality then gives
\[
 \mathcal E_N\le
 \left(\norm{e^0}_2^2+\sum_{0\le n<N}B_n
       +\mathfrak M_N^*\right)
 \prod_{0\le n<N}(1-2\gamma_n)^{-1}.
\]
Since $-\log(1-2\gamma_n)\le2\gamma_n/(1-2\gamma_n)\le(8/3)\gamma_n$, we obtain, on $\Omega_h$,
\begin{equation}\label{eq:path-bound}
 \mathcal E_N\le C_T\exp\left(\frac83C_*\int_0^T\norm{\partial_x u_K(t)}_\infty\dd t\right)
 \left(\norm{e^0}_2^2+\sum_{0\le n<N}B_n+\mathfrak M_N^*\right).
\end{equation}
By \eqref{eq:derivative-exp} with $A=(32r/3)C_*$, the exponential factor in \eqref{eq:path-bound} is bounded in $L^{4r}(\Omega)$, uniformly in $K$ and $\lambda\in[0,\Lambda]$. We apply H\"older's inequality to \eqref{eq:path-bound}, using \eqref{eq:summed} and \eqref{eq:martingale}:
\[
 \begin{aligned}
 \norm{\mathcal E_N\ind_{\Omega_h}}_{L^r(\Omega)}
 &\le C\left(\norm{e^0}_2^2
       +\left\|\sum_{0\le n<N}B_n\right\|_{L^{4r/3}(\Omega)}
       +\norm{\mathfrak M_N^*}_{L^{4r/3}(\Omega)}\right)\\
 &\le Ch^2+Ch\norm{\mathcal E_N}_{L^r(\Omega)}^{1/2}.
 \end{aligned}
\]
We next bound the probability of $\Omega_h^c$. Applying the embedding $H^2\hookrightarrow W^{1,\infty}$ to \eqref{eq:low} and Minkowski's inequality to \eqref{eq:chosen-local} yields
\[
 \begin{aligned}
 \left\|h\sup_{0\le t\le T}\norm{\partial_x u_K(t)}_\infty\right\|_{L^{16r}(\Omega)}&\le Ch,\\
 \left\|\sum_{0\le n<N}\norm{\kappa^n}_{H^2}\right\|_{L^{16r}(\Omega)}&\le CNh^{3/2}\le Ch^{1/2},\\
 \left\|h\sum_{0\le n<N}\norm{w^n}_{H^2}\right\|_{L^{16r}(\Omega)}&\le CNh^{37/18}\le Ch^{19/18}.
 \end{aligned}
\]
Markov's inequality and the union bound give
\begin{equation}\label{eq:bad-prob}
 \PP(\Omega_h^c)
 \le\sum_{j=1}^3\PP(\Omega_{j,h}^c)
 \le C\left(h^{16r}+h^{8r}+h^{152r/9}\right)
 \le Ch^{8r}.
\end{equation}

H\"older's inequality and \eqref{eq:bad-prob} give
\[
 \norm{\mathcal E_N\ind_{\Omega_h^c}}_{L^r(\Omega)}
 \le\norm{\mathcal E_N}_{L^{2r}(\Omega)}\PP(\Omega_h^c)^{1/(2r)}\le Ch^4.
\]
Using $\mathcal E_N=\mathcal E_N\ind_{\Omega_h}+\mathcal E_N\ind_{\Omega_h^c}$, the triangle inequality and Young's inequality give
\[
 \begin{aligned}
 &\norm{\mathcal E_N}_{L^r(\Omega)}
 \le\norm{\mathcal E_N\ind_{\Omega_h}}_{L^r(\Omega)}
      +\norm{\mathcal E_N\ind_{\Omega_h^c}}_{L^r(\Omega)}\\
 \le{}& Ch^2+Ch\norm{\mathcal E_N}_{L^r(\Omega)}^{1/2}+Ch^4
 \le\tfrac12\norm{\mathcal E_N}_{L^r(\Omega)}+Ch^2.
 \end{aligned}
\]
Thus $\norm{\mathcal E_N}_{L^r(\Omega)}\le Ch^2$. Substituting this bound into \eqref{eq:total-error-decomposition} yields
\[
 \E\max_{0\le n\le N}\norm{u(t_n)-U^n}_2^{2r}
 \le C\bigl(h^{2r}+\norm{\mathcal E_N}_{L^r(\Omega)}^r\bigr)
 \le Ch^{2r},\qquad 0<h\le\varepsilon.
\]

\end{proof}

\section{Numerical experiments}\label{sec:numerics}
We examine the strong convergence of the implicit midpoint method for $\lambda\in\{0,0.5,2\}$. Let $\U=\mathbb R^4$ with standard basis $(e_j)_{j=1}^4$, and let $W$ be a standard Wiener process in $\U$. The initial data and noise coefficient are given by
\[
 \begin{gathered}
 u_0(x)=0.4\sin x+0.1\cos(2x),\\
 \Phi e_1=0.5\cos x,\quad \Phi e_2=0.5\sin x,\quad
 \Phi e_3=0.4\cos(2x),\quad \Phi e_4=0.4\sin(2x).
 \end{gathered}
\]
The initial data and all noise profiles have zero mean and belong to $H^s(\T)$ for every $s\ge0$.

We use a Fourier pseudospectral method with 64 equispaced spatial nodes and the $2/3$ rule for dealiasing the quadratic term. The nonlinear equation at each time step is solved by fixed-point iteration with tolerance $10^{-12}$. The computations are performed on $[0,T]$ with $T=1$ and time step sizes $h=2^{-k}$, $k=4,\ldots,8$. The reference solution is computed by the implicit midpoint method on the same spatial grid, with $h_{\rm ref}=2^{-12}$. We estimate the errors using 400 independent sample paths, with each numerical solution and its reference driven by the same Wiener path.

For each $h$ and $\lambda$, let $U^{n,\ell}$ and $U_{\rm ref}^{\ell}(t_n)$ denote the numerical and reference values on the $\ell$th sample path. For $r=1,2$, the $L^{2r}(\Omega)$ norm of the maximum $L^2(\T)$ error at the time points is estimated by
\begin{equation}\label{eq:numerical-error}
 E_r(h,\lambda)=\left(\frac1{400}\sum_{\ell=1}^{400}
 \max_{0\le n\le T/h}\norm{U^{n,\ell}
       -U_{\rm ref}^{\ell}(t_n)}_2^{2r}\right)^{1/(2r)}.
\end{equation}

\begin{figure}[htbp]
 \centering
 \includegraphics[width=\textwidth]{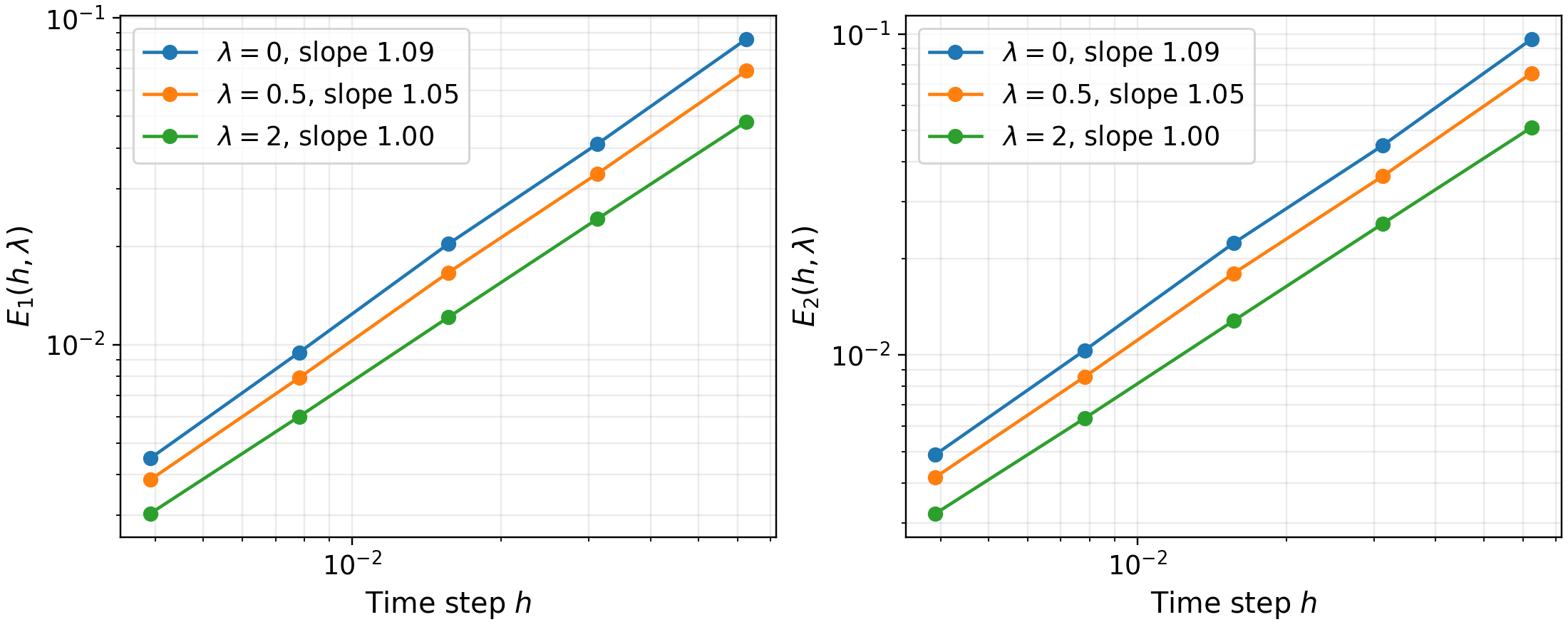}
 \caption{Temporal convergence for $\lambda=0,0.5,2$: $E_1(h,\lambda)$ (left) and $E_2(h,\lambda)$ (right), defined in \eqref{eq:numerical-error}. The legends report convergence rates estimated using $h=2^{-6},2^{-7},2^{-8}$.}
 \label{fig:midpoint-convergence}
\end{figure}

Figure~\ref{fig:midpoint-convergence} shows rates close to one for both error measures and all three damping coefficients, including $\lambda=0$.

\FloatBarrier
\section{Conclusion}\label{sec:conclusion}
We have established first-order strong convergence of the implicit midpoint method for the periodic stochastic KdV equation with additive noise. On any fixed finite time interval, the maximum $L^2(\T)$ error at the time points is $O(h)$ in $L^p(\Omega)$ for all $1\le p<\infty$. The estimate holds for every solution of the implicit midpoint scheme and any damping coefficient $\lambda\ge0$, including the undamped case $\lambda=0$. This answers, in the periodic setting, the question of a strong convergence rate that remained open after the work of Debussche and Printems. Using a classical KdV invariant and the coercivity estimate, we derive uniform exponential bounds for comparison solutions and pass to the limit to obtain exponential integrability of the exact solution. The strong convergence proof combines estimates for the comparison solutions with an $L^2$ error propagation identity that isolates a martingale increment. The numerical results support the predicted first-order convergence. Future work will focus on fully discrete approximations.

\clearpage
\appendix
\section{Proof of Proposition~\ref{prop:smooth}}\label{app:comparison-existence}

\begin{proof}
\emph{Step 1: local existence, uniqueness, and adaptedness.}
For fixed $K$, $\eta_K(t)=\int_0^tP_K\Phi\dd W$ has continuous paths in $P_KH$, hence in every $H^m$ on finite time intervals, almost surely. Setting $v_K=u_K-\eta_K$ and subtracting $\mathrm d\eta_K=P_K\Phi\dd W$ from \eqref{eq:smooth} gives
\begin{equation}
\partial_t v_K+\partial_x^3v_K+v_K\partial_x v_K
+\partial_x(\eta_Kv_K)+\lambda v_K
=
-\partial_x^3\eta_K-\eta_K\partial_x \eta_K-\lambda\eta_K,
\label{eq:pathwise}
\end{equation}
with \(v_K(0)=P_Ku_0\).

For fixed \(K\) and an integer \(\ell\ge K\), let \(v_{K,\ell}\) take
values in \(P_\ell H\) and solve
\begin{equation}\label{eq:comparison-galerkin}
\partial_t v_{K,\ell}
+\partial_x^3v_{K,\ell}
+P_\ell(v_{K,\ell}\partial_x v_{K,\ell})
+P_\ell\partial_x(\eta_Kv_{K,\ell})
+\lambda v_{K,\ell}
=
P_\ell f_K,
\end{equation}
where \(f_K=-\partial_x^3\eta_K-\eta_K\partial_x \eta_K-\lambda\eta_K\)
and \(v_{K,\ell}(0)=P_Ku_0\).
By standard finite-dimensional ODE theory, this system has a unique local solution.

Let \(m\ge3\). Since \(P_\ell\) is an orthogonal projection in \(H^m\),
taking the \(H^m\) inner product of \eqref{eq:comparison-galerkin} with
\(v_{K,\ell}\) and using the skew-adjointness of \(\partial_x^3\) gives
\[
\begin{aligned}
\frac12\frac{\mathrm d}{\mathrm dt}\|v_{K,\ell}\|_{H^m}^2
+\lambda\|v_{K,\ell}\|_{H^m}^2
={}&
-\langle \langle D\rangle^m(v_{K,\ell}\partial_x v_{K,\ell}),
         \langle D\rangle^m v_{K,\ell}\rangle
\\
&-\langle \langle D\rangle^m\partial_x(\eta_Kv_{K,\ell}),
          \langle D\rangle^m v_{K,\ell}\rangle
\\
&+\langle \langle D\rangle^m f_K,
          \langle D\rangle^m v_{K,\ell}\rangle .
\end{aligned}
\]
The first term is bounded by \eqref{eq:tame}. For the mixed term, set \(v=v_{K,\ell}\) and write
\[
\langle D\rangle^m\partial_x(\eta_Kv)
=
\eta_K\partial_x(\langle D\rangle^m v)
+
[\langle D\rangle^m,\eta_K]\partial_x v
+
\langle D\rangle^m((\partial_x\eta_K)v).
\]
Periodic integration by parts gives
\[
\langle
\eta_K\partial_x(\langle D\rangle^m v),
\langle D\rangle^m v
\rangle
=
-\frac12\int_{\mathbb T}
(\partial_x\eta_K)|\langle D\rangle^m v|^2.
\]
The commutator estimate \eqref{eq:periodic-commutator}, the product estimate
\eqref{eq:product}, and the embedding \(H^m\hookrightarrow W^{1,\infty}\)
therefore imply
\[
\|[\langle D\rangle^m,\eta_K]\partial_x v\|_2
+
\|(\partial_x\eta_K)v\|_{H^m}
\le
C_m\|\eta_K\|_{H^{m+1}}\|v\|_{H^m},
\]
and hence
\[
\big|
\langle
\langle D\rangle^m\partial_x(\eta_Kv),
\langle D\rangle^m v
\rangle
\big|
\le
C_m\|\eta_K\|_{H^{m+1}}\|v\|_{H^m}^2.
\]
The product estimate also gives
\[
\begin{aligned}
\|f_K\|_{H^m}^2
&\le C_{m,\lambda}\left(\|\eta_K\|_{H^{m+3}}^2
      +\|\eta_K\|_{H^{m+1}}^4\right)\\
&\le C_{m,\lambda}\left(1+\|\eta_K\|_{H^{m+3}}^4\right).
\end{aligned}
\]
Set \(E_{m,\ell}=1+\|v_{K,\ell}\|_{H^m}^2\).
Combining the bounds for the three terms with
\(\|\partial_x v_{K,\ell}\|_\infty\le C\|v_{K,\ell}\|_{H^3}\)
and Young's inequality gives
\begin{equation}
\frac{\mathrm d}{\mathrm dt}E_{m,\ell}
\le
C_{m,\lambda}
\left(
1+\|v_{K,\ell}\|_{H^3}
+\|\eta_K\|_{H^{m+3}}^4
\right)
E_{m,\ell},
\qquad m\ge3,
\label{eq:local-high-persistence}
\end{equation}
where \(C_{m,\lambda}\) is independent of \(\ell\).

Fix \(T>0\). Since
\(\sup_{0\le t\le T}\|\eta_K(t)\|_{H^6}<\infty\)
and \(\|v_{K,\ell}\|_{H^3}\le E_{3,\ell}^{1/2}\),
estimate \eqref{eq:local-high-persistence} with $m=3$ gives $\frac{\mathrm d}{\mathrm dt}E_{3,\ell}\le CE_{3,\ell}^{3/2}$ for $t\le T$ as long as $v_{K,\ell}$ exists.
Here \(C\) depends only on \(\lambda\) and
\(\sup_{0\le t\le T}\|\eta_K(t)\|_{H^6}\).
The initial value \(E_{3,\ell}(0)=1+\|P_Ku_0\|_{H^3}^2\)
is independent of \(\ell\). Choose \(0<t_*\le T\) such that
\(Ct_*\sqrt{1+\|P_Ku_0\|_{H^3}^2}\le1\).
Integrating on the interval of existence gives, for \(t\le t_*\),
\[
E_{3,\ell}(t)
\le
\frac{E_{3,\ell}(0)}
{\left(1-\frac C2t\sqrt{E_{3,\ell}(0)}\right)^2}
\le4E_{3,\ell}(0).
\]
The continuation criterion for finite-dimensional ODEs therefore ensures
that every \(v_{K,\ell}\) exists on \([0,t_*]\), with an
\(H^3\) bound independent of \(\ell\).

Applying Gronwall's inequality to \eqref{eq:local-high-persistence},
using the uniform \(H^3\) bound and the regularity of \(\eta_K\), gives
\begin{equation}\label{eq:comparison-uniform-sobolev}
\sup_{\ell\ge K}\sup_{0\le t\le t_*}
\|v_{K,\ell}(t)\|_{H^m}<\infty,
\qquad m\ge3.
\end{equation}
Using \eqref{eq:comparison-uniform-sobolev} with \(m\) replaced by \(m+3\), together with
\eqref{eq:comparison-galerkin} and the product estimate \eqref{eq:product}, gives
\[
\sup_{\ell\ge K}\sup_{0\le t\le t_*}
\left(
\|v_{K,\ell}(t)\|_{H^{m+1}}
+
\|\partial_t v_{K,\ell}(t)\|_{H^m}
\right)
<\infty,
\qquad m\ge3.
\]
These bounds give equicontinuity in $H^m$ and, by $H^{m+1}\compactembed H^m$, relative compactness of the values. Arzelà--Ascoli and a diagonal argument yield a subsequence $\ell_j\to\infty$ and a function $v_K$ such that $v_{K,\ell_j}\to v_K$ in $C([0,t_*];H^m)$ for every integer $m\ge3$.

Convergence in \(C([0,t_*];H^{m+3})\), the product estimate
\eqref{eq:product}, and the Fourier projection estimate
\(\|(I-P_\ell)g\|_{H^m}\le C\ell^{-1}\|g\|_{H^{m+1}}\)
for \(g\in H_0^{m+1}\) allow us to pass to the limit in the integral form of
\eqref{eq:comparison-galerkin}. Hence \(v_K\) satisfies
\eqref{eq:pathwise} with \(v_K(0)=P_Ku_0\).
Consequently, \(u_K=v_K+\eta_K\) satisfies the integral form of
\eqref{eq:smooth} on \([0,t_*]\).

We next prove uniqueness. Let \(v,\widetilde v\in C([0,t_*];H^3)\)
be two solutions of \eqref{eq:pathwise} with the same initial value and set
\(z=v-\widetilde v\). Subtracting the equations gives
\[
\partial_tz+\partial_x^3z
+\partial_x\big((\widetilde v+\eta_K)z\big)
+z\partial_x z+\lambda z=0.
\]
Taking the \(L^2\) inner product with \(z\) and integrating by parts, we obtain
\[
\frac{\mathrm d}{\mathrm dt}\|z\|_2^2
+
2\lambda\|z\|_2^2
=
-\int_{\mathbb T}
\partial_x(\widetilde v+\eta_K)z^2
\le
\|\partial_x(\widetilde v+\eta_K)\|_\infty
\|z\|_2^2.
\]
Since \(H^3\hookrightarrow W^{1,\infty}\), Gronwall's inequality gives
\(z=0\). Uniqueness and relative compactness then imply
\(v_{K,\ell}\to v_K\) in \(C([0,t_*];H^m)\) for every integer \(m\ge3\).

The local existence argument also applies to \eqref{eq:pathwise} with
initial value \(v_K(s_0)=\phi\) at any time \(s_0\ge0\), provided that
\(\phi\in H_0^m\) for every integer \(m\ge3\).
Solve \eqref{eq:comparison-galerkin} with initial value
\(v_{K,\ell}(s_0)=P_\ell\phi\).
Since \(\|P_\ell\phi\|_{H^m}\le\|\phi\|_{H^m}\) and
\(P_\ell\phi\to\phi\) in \(H^m\), the estimates \eqref{eq:local-high-persistence}--\eqref{eq:comparison-uniform-sobolev} and the
compactness argument remain valid.
The length of the local existence interval depends only on \(\lambda\),
\(\|\phi\|_{H^3}\) and
\(\sup_{s_0\le s\le s_0+1}\|\eta_K(s)\|_{H^6}\).
By uniqueness, these local solutions define a unique maximal solution on $[0,\zeta_K)$, where $\zeta_K$ is the supremum of the times of extension, with $v_K\in C([0,t];H_0^m)$ for every $t<\zeta_K$ and integer $m\ge3$.

For fixed $t\ge0$, we claim that $t<\zeta_K$ if and only if a subsequence of solutions of \eqref{eq:comparison-galerkin} exists on $[0,t]$ and is bounded in $C([0,t];H^3)$.

Suppose that \(0<t<\zeta_K\) and set
\(\varrho=1+\sup_{0\le r\le t}\|v_K(r)\|_{H^3}\).
Suppose that \(v_{K,\ell}\) exists on
\([0,s]\), where \(0<s\le t\), and satisfies
\(\sup_{0\le r\le s}\|v_{K,\ell}(r)\|_{H^3}\le2\varrho\).
Gronwall's inequality applied to \eqref{eq:local-high-persistence}
then bounds \(v_{K,\ell}\) in every \(H^m\) on \([0,s]\), with constants
independent of \(\ell\) and \(s\).
Set \(z=v_{K,\ell}-v_K\). Since
\(v_{K,\ell}=P_\ell v_{K,\ell}\), we have
\(P_\ell z=z+(I-P_\ell)v_K\).
Subtracting \eqref{eq:pathwise} from \eqref{eq:comparison-galerkin}
and taking the \(L^2\) inner product with \(z\) gives
\[
\begin{aligned}
\frac{\mathrm d}{\mathrm dr}\|z\|_2^2+2\lambda\|z\|_2^2
={}&-\int_{\mathbb T}\partial_x(v_K+\eta_K)z^2\\
&-2\left\langle
v_{K,\ell}\partial_x v_{K,\ell}
+\partial_x(\eta_Kv_{K,\ell})-f_K,\,
(I-P_\ell)v_K
\right\rangle .
\end{aligned}
\]
The last term has absolute value at most
\(C\ell^{-6}\sup_{0\le r\le t}\|v_K(r)\|_{H^6}\),
where \(C\) is independent of \(\ell\) and \(s\).
Since \(z(0)=0\), Gronwall's inequality yields
\(\sup_{0\le r\le s}\|z(r)\|_2\le C\ell^{-3}\).
Combining this bound with
\(\|z\|_{H^3}^2\le\|z\|_2\|z\|_{H^6}\)
and the uniform \(H^6\) bounds gives
\begin{equation}\label{eq:comparison-galerkin-error}
\sup_{0\le r\le s}\|v_{K,\ell}(r)-v_K(r)\|_{H^3}
\le C\ell^{-3/2},
\end{equation}
with \(C\) independent of \(\ell\) and \(s\).
For all sufficiently large \(\ell\), the right-hand side is at most
\(1/2\), so
\(\sup_{0\le r\le s}\|v_{K,\ell}(r)\|_{H^3}\le\varrho-1/2<2\varrho\).
A bootstrap argument using continuity and the continuation criterion for
\eqref{eq:comparison-galerkin} shows that \(v_{K,\ell}\) exists on \([0,t]\).
Taking \(s=t\) in \eqref{eq:comparison-galerkin-error} gives
\begin{equation}\label{eq:comparison-galerkin-convergence}
v_{K,\ell}\longrightarrow v_K
\qquad\text{in }C([0,t];H^3).
\end{equation}

Conversely, suppose that a subsequence of solutions of
\eqref{eq:comparison-galerkin} exists on \([0,t]\) and is bounded in
\(C([0,t];H^3)\).
Estimate \eqref{eq:local-high-persistence} gives uniform bounds in every
\(H^m\), so the compactness argument for \eqref{eq:comparison-galerkin} yields a solution of
\eqref{eq:pathwise} in \(C([0,t];H_0^m)\) for every integer \(m\ge3\).
By uniqueness, this solution agrees with \(v_K\) wherever both are defined.
Local continuation at time \(t\) then gives \(t<\zeta_K\).

For fixed \(K,\ell\) and \(t\ge0\), the solution of \eqref{eq:comparison-galerkin} on \([0,t]\) depends continuously on the input path \(\eta_K|_{[0,t]}\in C([0,t];P_KH)\), and the set of input paths for which it exists on \([0,t]\) is open.
Since \(\eta_K|_{[0,t]}\) is \(\mathcal F_t\)-measurable, the events
\[
E_{\ell,n}(t)
=
\left\{
v_{K,\ell}\text{ exists on }[0,t],\quad
\sup_{0\le s\le t}\|v_{K,\ell}(s)\|_{H^3}\le n
\right\}
\]
belong to \(\mathcal F_t\) for every \(n\in\mathbb N\).
The characterization of \(\{t<\zeta_K\}\) gives
\[
\{\zeta_K>t\}
=
\bigcup_{n=1}^{\infty}
\bigcap_{N=K}^{\infty}
\bigcup_{\ell\ge N}
E_{\ell,n}(t)
\in\mathcal F_t,
\]
so \(\zeta_K\) is a stopping time.

After extending \(v_{K,\ell}(t)\) by zero where it is undefined,
\eqref{eq:comparison-galerkin-convergence} implies that
\(v_K(t)\mathbf1_{\{t<\zeta_K\}}\) is \(\mathcal F_t\)-measurable.
Hence \(u_K=v_K+\eta_K\) is adapted on \([0,\zeta_K)\).

\medskip\noindent\emph{Step 2: global existence.}
Suppose that \(\zeta_K<\infty\) and
\(\sup_{t<\zeta_K}\|v_K(t)\|_{H^3}<\infty\).
Since \(\eta_K\) is bounded in \(H^6\) on \([0,\zeta_K+1]\),
Step 1 provides a local existence time bounded below independently
of the starting time \(s_0<\zeta_K\).
Taking \(s_0\) sufficiently close to \(\zeta_K\) contradicts maximality.
Since \(u_K=v_K+\eta_K\) and \(\eta_K\) is bounded in \(H^3\)
on finite intervals, we obtain
\begin{equation}\label{eq:comparison-blowup-alternative}
\zeta_K<\infty
\quad\Longrightarrow\quad
\limsup_{t\uparrow\zeta_K}\|u_K(t)\|_{H^3}=\infty.
\end{equation}

We first show that finite-time blow-up forces \(G(u_K)\) to become unbounded.
Fix \(T>0\), \(R>G(P_Ku_0)\) and \(M>\|P_Ku_0\|_{H^6}\)
and define the stopping times
\[
\tau_R
=
\inf\{t<\zeta_K:G(u_K(t))\ge R\},
\qquad
\sigma_M
=
\inf\{t<\zeta_K:\|u_K(t)\|_{H^6}\ge M\},
\]
where \(\inf\varnothing=\infty\).
Since \(\|u_K\|_{H^3}\le\|u_K\|_{H^6}\),
\eqref{eq:comparison-blowup-alternative} implies that
\(\sigma_M<\zeta_K\) on \(\{\zeta_K<\infty\}\)
for every \(M>\|P_Ku_0\|_{H^6}\).

For \(L>\|P_Ku_0\|_{H^9}\), set
\(\rho_L=\inf\{t<\zeta_K:\|u_K(t)\|_{H^9}\ge L\}\).
On \([0,T\wedge\tau_R\wedge\sigma_M\wedge\rho_L]\),
\(u_K\) is bounded in \(H^9\), so Itô's formula applies to
\(G(u_K)\) and \(\|u_K\|_{H^6}^2\).

Since \(T\wedge\sigma_M<\zeta_K\), continuity in \(H^9\) gives
\(\sup_{0\le t\le T\wedge\sigma_M}\|u_K(t)\|_{H^9}<\infty\) almost surely.
Thus \(\rho_L>T\wedge\sigma_M\) for all sufficiently large \(L\), almost surely.

Applying Lemma~\ref{lem:generator} with \(\Phi\) replaced by \(P_K\Phi\) gives
\[
\mathrm dG(u_K)
\le
\left(
C_0\|P_K\Phi\|_{\mathcal L_2(\U,H^2)}^2
+
C_1\lambda G(u_K)
\right)\dd t
+
\mathrm DG(u_K)[P_K\Phi\dd W].
\]
Before \(\tau_R\), the quadratic variation of the stopped stochastic
integral is bounded by
\(C_2\|P_K\Phi\|_{\mathcal L_2(\U,H^2)}^2R^{5/6}T\).
The stopped stochastic integral is therefore a square-integrable
martingale. The stopped values of \(G\) are bounded by \(R\).
Taking expectations, applying Gronwall's inequality, and letting
\(L\to\infty\) by bounded convergence, we obtain, for \(0\le t\le T\),
\begin{equation}
\mathbb E
G\bigl(u_K(t\wedge\tau_R\wedge\sigma_M)\bigr)
\le
\left(
G(P_Ku_0)
+
C_0\|P_K\Phi\|_{\mathcal L_2(\U,H^2)}^2t
\right)
e^{C_1\lambda t}.
\label{eq:stoppedG}
\end{equation}

We next remove the stopping time \(\sigma_M\). Set
\(E_6(t)=1+\|u_K(t)\|_{H^6}^2\),
\(\beta_{6,K}^2=\|P_K\Phi\|_{\mathcal L_2(\U,H^6)}^2\),
and, for \(t<\zeta_K\),
\[
M_6(t)
=
2\int_0^t
\left\langle
\langle D\rangle^6u_K(s),
\langle D\rangle^6P_K\Phi\dd W(s)
\right\rangle .
\]
The skew-adjointness of \(\partial_x^3\), the estimate \eqref{eq:tame},
and the nonpositivity of the damping contribution give
\[
\mathrm dE_6
\le
C_6\|\partial_x u_K\|_\infty E_6\dd t
+
\beta_{6,K}^2\dd t
+
\mathrm dM_6.
\]
Moreover,
\[
\mathrm d[M_6]_t
=
4\|
(\langle D\rangle^6P_K\Phi)^*
\langle D\rangle^6u_K(t)
\|_{\U}^2\dd t
\le
4\beta_{6,K}^2E_6(t)\dd t.
\]

Define \(N_6(t)=\int_0^t E_6(s)^{-1}\dd M_6(s)\).
Since \(E_6\ge1\), Itô's formula gives
\begin{equation}
\mathrm d\log E_6
\le
\left(
C_6\|\partial_x u_K\|_\infty+\beta_{6,K}^2
\right)\dd t
+
\mathrm dN_6,
\label{eq:continuation-log}
\end{equation}
and
\[
\mathrm d[N_6]_t
=
E_6(t)^{-2}\mathrm d[M_6]_t
\le
4\beta_{6,K}^2\dd t.
\]
Extending the integrand by zero after \(\zeta_K\wedge\tau_R\)
gives a square-integrable martingale on \([0,T]\), which we again denote by \(N_6\).
Doob's inequality yields
\begin{equation}\label{eq:doob-N6}
\mathbb E\sup_{0\le t\le T}|N_6(t)|^2
\le
4\mathbb E[N_6]_T
\le
16\beta_{6,K}^2T.
\end{equation}

For \(t<\zeta_K\wedge\tau_R\), Lemma~\ref{lem:derivative-control} gives
\(\|\partial_x u_K(t)\|_\infty\le CR\).
Integrating \eqref{eq:continuation-log} up to
\(t\wedge\tau_R\wedge\sigma_M\) therefore yields
\[
\log E_6(t\wedge\tau_R\wedge\sigma_M)
\le
\log E_6(0)
+
(CR+\beta_{6,K}^2)T
+
\sup_{0\le s\le T}|N_6(s)|.
\]

On the event \(\{\sigma_M\le T\wedge\tau_R\}\),
continuity of \(u_K\) in \(H^6\) gives \(E_6(\sigma_M)=1+M^2\).
Hence
\[
\{\sigma_M\le T\wedge\tau_R\}
\subset
\left\{
\sup_{0\le s\le T}|N_6(s)|
\ge
\log\frac{1+M^2}{E_6(0)}
-
(CR+\beta_{6,K}^2)T
\right\}.
\]
For all sufficiently large \(M\), Chebyshev's inequality and
\eqref{eq:doob-N6} give
\begin{equation}
\mathbb P(\sigma_M\le T\wedge\tau_R)
\le
\frac{
16\beta_{6,K}^2T
}{
\left[
\log\frac{1+M^2}{E_6(0)}
-
(CR+\beta_{6,K}^2)T
\right]^2
}
\longrightarrow0
\qquad
\text{as }M\to\infty.
\label{eq:sobolev-exit-probability}
\end{equation}

On \(\{\zeta_K\le T,\ \tau_R\ge\zeta_K\}\),
the blow-up alternative \eqref{eq:comparison-blowup-alternative} and
\(\|u_K\|_{H^3}\le\|u_K\|_{H^6}\) imply
\(\sigma_M<\zeta_K\le T\wedge\tau_R\)
for every \(M>\|P_Ku_0\|_{H^6}\). Thus
\[
\{\zeta_K\le T,\ \tau_R\ge\zeta_K\}
\subset
\{\sigma_M\le T\wedge\tau_R\}.
\]
By \eqref{eq:sobolev-exit-probability}, $\mathbb P(\zeta_K\le T,\tau_R\ge\zeta_K)=0$, and hence $T\wedge\tau_R<\zeta_K$ almost surely.
Continuity of \(u_K\) in \(H^6\) then gives
\(\sigma_M>T\wedge\tau_R\) for all sufficiently large \(M\), almost surely.
Fatou's lemma and \eqref{eq:stoppedG} with \(t=T\) yield
\begin{equation}\label{eq:stopped-G-limit}
\begin{aligned}
\mathbb E G(u_K(T\wedge\tau_R))
&\le
\liminf_{M\to\infty}
\mathbb E
G(u_K(T\wedge\tau_R\wedge\sigma_M))
\\
&\le
\left(
G(P_Ku_0)
+
C_0\|P_K\Phi\|_{\mathcal L_2(\U,H^2)}^2T
\right)
e^{C_1\lambda T}.
\end{aligned}
\end{equation}

By continuity, \(G(u_K(\tau_R))=R\) on \(\{\tau_R\le T\}\).
Since \(\mathbb P(\zeta_K\le T,\tau_R\ge\zeta_K)=0\),
Markov's inequality and \eqref{eq:stopped-G-limit} give
\[
\mathbb P(\zeta_K\le T)
\le \mathbb P(\tau_R\le T)
\le \frac{\mathbb E G(u_K(T\wedge\tau_R))}{R}
\longrightarrow0
\qquad\text{as }R\to\infty.
\]
Since \(T>0\) is arbitrary, \(\zeta_K=\infty\) almost surely.
The regularity follows from Step~1.
\end{proof}

\end{document}